\newtheorem{theorem}{Theorem}[section]
\newtheorem{lemma}[theorem]{Lemma}
\newtheorem{corollary}[theorem]{Corollary}
\newtheorem{proposition}[theorem]{Proposition}
\numberwithin{equation}{section}
\theoremstyle {definition}
\newtheorem{remark}[theorem]{Remark}
\DeclareMathOperator{\area}{area} 
\DeclareMathOperator{\vol}{vol}
\newcommand{\R}{\mathbb{R}}
\DeclareMathOperator{\Ric}{Ric}
\title[Large isoperimetric regions in asymptotically S-AdS initial data]{Characterization of large isoperimetric regions in asymptotically hyperbolic initial data}
\author{Otis Chodosh}
\address{Department of Mathematics, Princeton University, Fine Hall, Washington Road, Princeton, NJ, 08544, United States}
\email{ochodosh@princeton.edu}
\author{Michael Eichmair}
\address{Faculty of Mathematics, University of Vienna, Oskar-Morgenstern-Platz 1, 1090 Vienna, Austria}
\email {michael.eichmair@univie.ac.at}
\author{Yuguang Shi}
\address{Key Laboratory of Pure and Applied Mathematics, School of Mathematical Sciences, Peking University, Beijing, 100871, P.~R.~China}
\email{ygshi@math.pku.edu.cn}
\author{Jintian Zhu}
\address{Key Laboratory of Pure and Applied Mathematics, School of Mathematical Sciences, Peking University, Beijing, 100871, P.~R.~China}
\email{zhujt@pku.edu.cn}
\begin{document}

\begin {abstract}
Let $(M,g)$ be a complete Riemannian $3$-manifold asymptotic to Schwarzschild-anti-deSitter and with scalar curvature $R \geq - 6$. Building on work of A.~Neves and G.~Tian and of the first-named author, we show that the leaves of the canonical foliation of $(M, g)$ are the unique solutions of the isoperimetric problem for their area. The assumption $R \geq -6$ is necessary.

This is the first characterization result for large isoperimetric regions in the asymptotically hyperbolic setting that does not assume exact rotational symmetry at infinity. 

\end {abstract}

\maketitle

\date{}


\section {Introduction}

The systematic study of stable constant mean curvature spheres in initial data sets for the Einstein equations has been pioneered in the work of D.~Christodoulou and S.-T.~Yau \cite{Christodoulou-Yau:1988} and of G.~Huisken and S.-T. Yau \cite{Huisken-Yau:1996}. The existence of the canonical foliation of the end of initial data asymptotic to Schwarzschild-anti-deSitter has been established by R.~Rigger \cite{Rigger:2004}. A.~Neves and G.~Tian \cite{Neves-Tian:2009, Neves-Tian:2010} have shown that the leaves of this foliation are the unique stable constant mean curvature spheres that enclose the center of the manifold and which satisfy a pinching condition that relates their inner and their outer radius. We refer the readers to Appendix \ref{sec:initialdata} for notation and to Appendix \ref{sec:hawkingalongfoliation} for a more detailed discussion of these results. 

In Theorem \ref{thm:main1}, we observe that the pinching condition used in \cite{Neves-Tian:2009}, stated here as \eqref{pinching}, may be replaced by an integral condition in the form of an a priori bound on their Hawking mass. 

\begin {theorem} \label{thm:main1}
Let $(M, g)$ be asymptotic to Schwarzschild-anti-deSitter with mass $m > 0$. Let $\Lambda > 0$. There is a constant $r_0 > 1$ with the following property. Every stable constant mean curvature sphere $\Sigma$ in $(M, g)$ that encloses $B_{r_0}$ and with Hawking mass $m_{H} (\Sigma) \leq \Lambda$ is a leaf of the canonical foliation.
\end {theorem}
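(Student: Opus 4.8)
The plan is to deduce Theorem~\ref{thm:main1} from the uniqueness theorem of Neves--Tian by checking that, once $r_0$ is large enough, every $\Sigma$ as in the statement satisfies the pinching condition~\eqref{pinching}; note that enclosing $B_{r_0}$ in particular forces $\Sigma$ to enclose the center, so the remaining hypotheses of that theorem hold. The new input --- the ``observation'' alluded to above --- is that stability together with the a~priori bound $m_H(\Sigma) \le \Lambda$ forces the total trace-free second fundamental form of $\Sigma$ to be small once $\Sigma$ lies far out. Given this, $\Sigma$ must be close to a centered coordinate sphere, and \eqref{pinching} follows from the position and roundness estimates for large stable constant mean curvature spheres in the asymptotically hyperbolic end that are available in the cited work of Neves--Tian and of the first-named author.

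Here is how I would run the key estimate. Since $\Sigma \cong S^2$, choose a conformal diffeomorphism onto the round sphere in $\R^3$ and compose it with a M\"obius transformation so that, by Hersch's lemma, its three $\R^3$-components $f_i$ are balanced, $\int_\Sigma f_i\, d\mu = 0$. Using the $f_i$ as test functions in the stability inequality, summing, and invoking the conformal invariance of the Dirichlet integral together with $\sum_i f_i^2 \equiv 1$, one gets $\int_\Sigma (|A|^2 + \Ric(\nu,\nu))\, d\mu \le \sum_i \int_\Sigma |\nabla f_i|^2\, d\mu = 8\pi$. Rewriting $|A|^2 + \Ric(\nu,\nu)$ via the Gauss equation, integrating, using the Gauss--Bonnet theorem, and discarding the nonnegative term $\tfrac12 \int_\Sigma (R+6)\, d\mu$ (here the assumption $R \ge -6$ enters) turns this into
\[
\tfrac12 \int_\Sigma |\mathring{A}|^2\, d\mu + \tfrac34 \int_\Sigma (H^2 - 4)\, d\mu \le 12\pi .
\]
The by-product $\int_\Sigma (H^2-4)\, d\mu \le 16\pi$, equivalently $m_H(\Sigma) \ge 0$, shows that stability already yields the lower bound on the Hawking mass; the content of the hypothesis is the matching upper bound, which rearranges to $\int_\Sigma (H^2 - 4)\, d\mu \ge 16\pi - C\Lambda\, |\Sigma|^{-1/2}$. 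Feeding this back in gives
\[
\int_\Sigma |\mathring{A}|^2\, d\mu \le C\Lambda\, |\Sigma|^{-1/2} .
\]
Since $\Sigma$ encloses $B_{r_0}$, radial projection onto $\partial B_{r_0}$ (area non-increasing in the end) yields $|\Sigma| \ge c\, |\partial B_{r_0}|$, which tends to infinity with $r_0$; hence the right-hand side can be made as small as we wish, uniformly over all admissible $\Sigma$, by enlarging $r_0$.

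It remains to upgrade this integral smallness of $\mathring{A}$ to the pinching~\eqref{pinching}. The two displayed inequalities also pin down the constant mean curvature, $H = 2 + 4\pi |\Sigma|^{-1} + O(|\Sigma|^{-3/2})$, and the strict monotonicity of $r \mapsto H(\partial B_r)$ in the end, applied at the innermost and outermost points of $\Sigma$, places $\Sigma$ near a coordinate sphere $\partial B_{r_\Sigma}$ with $|\partial B_{r_\Sigma}| \approx |\Sigma|$. The quantitative almost-umbilicity, combined with the balancing that stability imposes on large constant mean curvature spheres when $m > 0$, should then force $\Sigma$ to be a small graph over a \emph{centered} such sphere, so that $r_{\max}/r_{\min} \to 1$ and \eqref{pinching} holds for $r_0$ large; Neves--Tian then identifies $\Sigma$ as a leaf of the canonical foliation. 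I expect this last step to be the main obstacle: propagating the $L^2$ bound on $\mathring{A}$ to uniform pointwise, position, and ``no-neck'' estimates in the exponentially distorting geometry of the end, for surfaces about which one knows a~priori only that they sit in $\{r \ge r_0\}$ with area of order $r_\Sigma^2$, is precisely where one leans on the asymptotically hyperbolic curvature estimates of the cited works and on $m > 0$. By contrast, the genuinely new step is the clean reduction --- via Hersch's lemma, the Gauss equation, and $R \ge -6$ --- from the a~priori Hawking-mass bound to the smallness of $\int_\Sigma |\mathring{A}|^2$.
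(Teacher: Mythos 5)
Your opening reduction (pass through the pinching condition~\eqref{pinching} and quote Neves--Tian) is the correct framing, and the Hersch-lemma computation is essentially a re-derivation of the Christodoulou--Yau estimate~\eqref{CY} that the paper cites in closed form. The genuinely new observation --- that the upper bound on the Hawking mass, fed back against the Christodoulou--Yau lower bound, forces $\int_\Sigma |\mathring h|^2$ to be $O(|\Sigma|^{-1/2})$ --- does appear in both treatments, so the first half of your proposal is on target. However, there are three substantive problems with the rest.

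First, you invoke $R \geq -6$ to discard the term $\int_\Sigma (R+6)$. But Theorem~\ref{thm:main1} does \emph{not} assume $R \geq -6$; that hypothesis only enters Theorem~\ref{thm:main2}. The paper instead uses the asymptotic decay $R + 6 = O(e^{-5r})$, which is built into the definition of ``asymptotic to Schwarzschild--anti-deSitter,'' together with the estimate $\int_\Sigma e^{-3r} = O(|\Sigma|^{-1/2})$, to show that this term is negligible. Your proposal needs the same input, not a sign condition on the scalar curvature.

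Second, and more seriously, you never derive the estimate $\int_\Sigma e^{-3r} = O(|\Sigma|^{-1/2})$. This is Lemma 2.2 of the paper, proved by integrating the Gauss-curvature expansion~\eqref{expansionGauss} together with Gauss--Bonnet, and it is precisely where the hypothesis $m > 0$ enters. Without it, you cannot control the scalar-curvature and metric-error terms, you cannot sharpen the hyperbolic isoperimetric inequality (the improved estimate~\eqref{improvedisoperimetric}), and you cannot justify any of the downstream analytic estimates. The $L^2$ bound on $\mathring h$ alone is far too weak in the exponentially distorting hyperbolic end.

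Third, the step from integral almost-umbilicity to the pinching~\eqref{pinching} is not merely ``the main obstacle'' that ``should'' follow from position estimates and balancing --- it is the heart of the matter, and the route you sketch is not the one that works. Mean-curvature comparison at the innermost and outermost points of $\Sigma$ gives $e^{2\underline r(\Sigma)} \lesssim |\Sigma| \lesssim e^{2\overline r(\Sigma)}$, which confines $\Sigma$ to a coordinate shell but does nothing to bound $\overline r(\Sigma) - \underline r(\Sigma)$; indeed, the pinching condition precisely allows $\overline r / \underline r$ up to $6/5$, and the conclusion ``$r_{\max}/r_{\min} \to 1$'' is the desired output, not something one gets for free from umbilicity. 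The paper instead introduces two conformal rescalings $\hat g_\Sigma$ and $\tilde g_\Sigma$, applies the quantitative uniformization results of De Lellis--M\"uller (Proposition~\ref{prop:tildeg}) and Nerz (Corollary~\ref{cor:uniformizationhat}), and uses the conformal invariance of the functional $S$ together with an $L^p$ bound on $\Delta_{\mathbb{S}^2} w$ to show that $w = (r \circ \hat\varphi) - \hat r$ is bounded in $W^{2,p}$, hence in $L^\infty$ --- which is what gives pinching. None of this is anticipated in your proposal.
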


Assume now that $R \geq -6$ where $R$ is the scalar curvature of $(M, g)$. The existence of isoperimetric surfaces in $(M, g)$ for every sufficiently large area has been proven by the first-named author \cite{Chodosh:2016}, together with a bound of their Hawking mass. In conjunction with ideas from \cite{Ji-Shi-Zhu:2015, CESY}, we obtain from Theorem \ref{thm:main1}  our second main result in this paper: \textit{A fully global characterization of the leaves of the canonical foliation as the unique large solutions of the isoperimetric problem.} 

\begin {theorem} \label{thm:main2}  
Let $(M, g)$ be a complete Riemannian $3$-manifold asymptotic to Schwarzschild-anti-deSitter with mass $m > 0$. We also assume that $R \geq -6$ and that $\partial M$ is connected and the only closed $H = 2$ surface in $(M, g)$. There is $V_0 > 1$ with the following property. Let $\Omega \subset M$ be an isoperimetric region of volume $V$ where $V \geq V_0$. Then $\Omega$ is bounded by $\partial M$ and a leaf of the canonical foliation. In particular, the solutions of the isoperimetric problem in $(M, g)$ for sufficiently large volumes are unique. 
\end {theorem}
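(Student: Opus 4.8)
The plan is to deduce Theorem~\ref{thm:main2} from Theorem~\ref{thm:main1} once the coarse structure of large isoperimetric regions is understood. Fix any $\Lambda > m$. The existence theorem and the a priori Hawking mass bound of \cite{Chodosh:2016} provide $V_1 > 1$ such that for every $V \geq V_1$ an isoperimetric region of volume $V$ exists, and the boundary component facing the end of any isoperimetric region of volume $V \geq V_1$ has Hawking mass at most $\Lambda$ and area tending to infinity with $V$. Let $r_0 = r_0(\Lambda) > 1$ be the radius produced by Theorem~\ref{thm:main1}. It then suffices to find $V_0 \geq V_1$ so that, for every isoperimetric $\Omega$ with $\vol(\Omega) \geq V_0$, the relative boundary $\partial \Omega$ is the disjoint union of $\partial M$ and a single closed embedded surface $\Sigma$ enclosing $B_{r_0}$. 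Granting this, $\Sigma$ is a closed two-sided stable constant mean curvature surface --- stability and the CMC property being inherited from $\Omega$ being isoperimetric, and no free boundary being present by the claim --- and combining Gauss--Bonnet with the stability inequality and $R \geq -6$, as in \cite{Neves-Tian:2009, Chodosh:2016}, forces $\Sigma$ to be a sphere. Since $m_H(\Sigma) \leq \Lambda$ and $\Sigma$ encloses $B_{r_0}$, Theorem~\ref{thm:main1} identifies $\Sigma$ as a leaf of the canonical foliation, proving the first assertion. Uniqueness follows because the leaves of the canonical foliation are pairwise disjoint and foliate a neighbourhood of infinity, so the volume enclosed between $\partial M$ and a leaf is a strictly increasing function of the leaf; hence two isoperimetric regions of equal volume at least $V_0$ are bounded by the same leaf and therefore coincide.

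To establish the structural claim I would argue as in \cite{Ji-Shi-Zhu:2015, CESY}. The first variation formula shows that $\partial \Omega$ has constant mean curvature $H$ on its interior portion and meets $\partial M$ orthogonally along any free boundary, and that $H$ equals the derivative of the isoperimetric profile of $(M,g)$ at $\vol(\Omega)$; since this profile is asymptotic to that of the Schwarzschild-anti-deSitter model, $H \to 2$ as $\vol(\Omega) \to \infty$. The monotonicity formula together with density estimates and the curvature estimates available for stable CMC surfaces with $H$ near $2$ and bounded Hawking mass in an asymptotically hyperbolic end yield uniform local smooth control on $\partial \Omega$ away from $\partial M$, so that each component is either confined to a fixed compact set or, for $\vol(\Omega)$ large, a smooth graph over a large coordinate sphere; a quantitative comparison of the isoperimetric profiles of $(M,g)$ and of the model then forces at most one such large component, and forces it to enclose every fixed coordinate ball once $\vol(\Omega)$ is large enough.

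The main obstacle, I expect, is to rule out for large $\vol(\Omega)$ the occurrence of free boundary on $\partial M$ and of small closed components of $\partial \Omega$, and to show that $\partial M$ itself is a component of $\partial \Omega$. This is exactly where the hypotheses $R \geq -6$ and ``$\partial M$ connected and the only closed $H = 2$ surface'' enter: $\partial M$ plays the role of an outermost horizon, a barrier/maximum principle argument against the $H = 2$ surface $\partial M$ shows that $\partial \Omega$ cannot touch $\partial M$ transversally once $\vol(\Omega)$ is large, and a cut-and-paste argument --- discarding a small component, or pushing $\partial \Omega$ off $\partial M$, strictly decreases perimeter at fixed volume once that component carries little volume, which the monotonicity formula ensures --- contradicts minimality unless $\Omega$ contains a neighbourhood of $\partial M$ and $\partial \Omega \setminus \partial M$ is a single closed surface. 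Together with the preceding paragraph this surface encloses $B_{r_0}$ for $\vol(\Omega)$ large, which is the structural claim. The delicate points are the quantitative profile comparison needed to locate and count the large component and the free boundary regularity of isoperimetric surfaces near the horizon $\partial M$.
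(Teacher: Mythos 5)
There is a genuine gap in your argument, and it occurs at the crucial point. You assert that ``combining Gauss--Bonnet with the stability inequality and $R \geq -6$ \dots forces $\Sigma$ to be a sphere.'' This is false. The Christodoulou--Yau/Ritor\'e--Ros estimate, stated as \eqref{CYR} in the paper, reads
\[
\frac{2}{3} \int_\Sigma |\mathring h|^2 + \frac{2}{3} \int_\Sigma (R + 6) + \int_\Sigma (H^2 - 4)  \leq \frac{64}{3} \, \pi
\]
for stable CMC surfaces of non-zero genus, which only improves to $16\pi$ on the right \emph{if} one already knows $\Sigma$ has genus zero. Since the Hawking mass bound gives $\int_\Sigma (H^2-4) = 16\pi + O(|\Sigma|^{-1/2}) < \tfrac{64}{3}\pi$, stability plus $R \geq -6$ does not contradict $\Sigma$ having genus one (a torus). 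This is exactly the difficulty that the paper's proof is organized around: Proposition~\ref{prop:largeisosphere} identifies $\Sigma$ as a leaf of the canonical foliation \emph{only conditionally on $\Sigma$ being a sphere}, and the entire second half of the proof (Lemmas~\ref{lem:torusintervals}, \ref{torusestimateisoperimetric}, \ref{lem:componentsno}) is devoted to excluding the toroidal alternative.

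The mechanism the paper uses --- which is absent from your proposal --- is Bray's comparison of second derivatives of the isoperimetric profile. On the set $\mathcal I$ of areas where the leaf $\Sigma_A$ of the canonical foliation fails to be isoperimetric, the unique large component has boundary of non-zero genus by Proposition~\ref{prop:largeisosphere}; on such components the Gauss--Bonnet term drops by $8\pi$ and one obtains the differential inequality $2\,V_g''(A)\,V_g'(A)^{-3}\,A^2 \geq 23\pi$ (Lemma~\ref{lem:torusintervals}), i.e.\ $A \mapsto V_g'(A)^{-2} - 23\pi A^{-1}$ is non-increasing on components of $\mathcal I$. Integrating this against the asymptotic profile of centered balls, $\vol_g(B_{r(A)}) = \tfrac12 A - \pi\log A + O(1)$, shows $\mathcal I$ cannot be connected at infinity, and comparison with the Hawking-mass expansion $H_A^2 = 4 + 16\pi A^{-1} + O(A^{-3/2})$ along the canonical foliation rules out bounded components of $\mathcal I$. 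Your coarse-structure analysis (unique large component, small collar near $\partial M$, no drift, Hawking mass bound) is in the right spirit and matches Lemmas~\ref{lem:onelargecomponent}--\ref{lem:mHbounds} and \ref{lem:nodrift}, but without the profile-convexity argument you have no way to rule out non-spherical isoperimetric boundaries, and hence no way to invoke Theorem~\ref{thm:main1}.

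A secondary remark: even granting sphericity, your uniqueness argument is incomplete. After showing that every spherical isoperimetric boundary is a leaf, one must still rule out a \emph{second} isoperimetric region of the same volume whose large boundary component has non-zero genus. The paper handles this with the same second-derivative comparison: along the canonical foliation $2V_g''V_g'^{-3}A^2 = 16\pi + o(1)$, while a toroidal competitor would force $2V_g''V_g'^{-3}A^2 \geq 24\pi + o(1)$, a contradiction.
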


When $(M,g)$ is exactly Schwarzschild-anti-deSitter, this result was proven by J.~Corvino, A.~Gerek, M.~Greenberg, and B.~Krummel in \cite{CorvinoGerekGreenbergKrummel} building on the pioneering work of H.~Bray \cite{Bray:1997}. When $(M, g)$ is \textit{isometric} to Schwarzschild-anti-deSitter outside of a compact set, Theorem \ref{thm:main2} was proven by the first-named author in \cite{Chodosh:2016}. It is shown in Section 10 of \cite{Chodosh:2016} that Theorem \ref{thm:main2} fails when the condition $R \geq -6$ is dropped: There exist rotationally symmetric $(M, g)$ with outermost $H = 2$ boundary that are equal to Schwarzschild-anti-deSitter outside of a compact set and in which \textit{no} large centered coordinate sphere is isoperimetric. Finally, we note that S.~Brendle has shown that in exact Schwarzschild-anti-deSitter (or Schwarzschild), the centered coordinate spheres are the unique embedded closed constant mean curvature surfaces \cite{Brendle:2013}. \\

We conclude with a brief account of the available results in the asymptotically flat setting. \\

The optimal, global uniqueness result for stable constant mean curvature spheres in initial data asymptotic to Schwarzschild has recently been established by the first- and the second-named authors in \cite{angstnomore, CE:far-outlying}, building on earlier work of G.~Huisken and S.-T.~Yau \cite{Huisken-Yau:1996}, of J.~Qing and G.~Tian \cite{Qing-Tian:2007}, of J.~Metzger and the second-named author \cite{stablePMT}, of S. Brendle and the second-named author \cite{Brendle-Eichmair:2014}, as well as that of A. Carlotto and the first- and second-named authors \cite{mineffectivePMT}. We refer to the introduction of \cite{angstnomore} for a comprehensive account and more detailed description of these and other important contributions in this context. \\

The global uniqueness of large isoperimetric surfaces in asymptotically flat manifolds with non-negative scalar curvature has been established recently in joint work \cite{CESY} of H.~Yu and the first-, second-, and third-named authors. Building on the work of H.\ Bray \cite{Bray:1997} for metrics which are exactly Schwarzschild outside of a compact set, global uniqueness of large solutions of the isoperimetric problem in $(M, g)$ asymptotic to Schwarzschild with mass $m > 0$ has been shown by J.~Metzger and the second-named author in \textit{any} dimension and with no assumption on the scalar curvature \cite{isostructure, hdiso}. These results in \cite{isostructure,hdiso,CESY} resolve a long-standing conjecture of G.~Huisken. \\

Finally, we note that there are very few geometries where we have complete understanding of the isoperimetric problem in the large. To our knowledge, the results in \cite{isostructure, hdiso, CESY} and Theorem 1.2 above are the only examples with no exact symmetries.
 \mbox{} \\

\noindent {\bf Acknowledgments.} Otis Chodosh is supported by the Oswald Veblen Fund and the NSF grant No.~1638352. Michael Eichmair is supported by the START-Project Y963-N35 of the Austrian Science Fund (FWF). Otis Chodosh and Michael Eichmair thank the Erwin Schr\"odinger Institute for its warm hospitality during the program \textit{Geometry and Relativity} in the Summer of 2017. They also thank Professors Hubert Bray, Simon Brendle, Gerhard Huisken, Jan Metzger, and Richard Schoen for their kind support and encouragement. Yuguang Shi and Jintian Zhu are supported by the NSFC grants No.~11671015 and 11731001. They would like thank Professor Li Yuxiang for his kind support.


\section {Proof of Theorem \ref{thm:main1}}

Throughout this section, we let $(M, g)$ be a Riemannian $3$-manifold that is asymptotic to Schwarzschild-anti-deSitter with mass $m > 0$. 

We assume that $\Sigma$ is a stable constant mean curvature sphere in $(M, g)$. The mean curvature of $\Sigma$ with respect to its outward pointing unit normal $\nu$ is denoted by $H$. We also assume that $\Sigma$ and $\partial M$ together bound a compact region $\Omega$ in $M$, and that $B_{r_0} \subset \Omega$ where $r_0 > 1$ is a large numerical constant that depends only on $(M, g)$. \\

Fix $\Lambda > 1$. We assume that  
\[
m_{H} (\Sigma) \leq \Lambda 
\]
where 
\begin{align} \label{eqn:Hawkingmassbound} 
m_{H} (\Sigma) = \frac{|\Sigma|^{\frac{1}{2}}}{ (16 \, \pi)^{\frac{3}{2}}} \left ( 16 \, \pi - (H^2 - 4) \,  |\Sigma|  \right)
\end{align}
is the Hawking mass of $\Sigma$. Note that \eqref{eqn:Hawkingmassbound} is  equivalent to either one of the bounds  
\begin{align} \label{boundH}
16 \, \pi - (H^2 - 4) |\Sigma| \leq O (|\Sigma|^{-1/2}) \qquad \text{ or } \qquad \frac{4 \, \pi}{|\Sigma|} \leq (H - 2) + O(|\Sigma|^{-3/2}).
\end{align}


\begin {lemma} We have
\begin {align} \label{ringh2CY} 
\int_\Sigma |\mathring h|^2 = O (1) \int_\Sigma e^{- 5 \, r} + O (|\Sigma|^{-1/2}).
\end{align}
\begin {proof} This follows from \eqref{CY} and $R + 6 = O (e ^{- 5 \, r})$. 
\end {proof}
\end {lemma}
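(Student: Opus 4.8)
The lemma asserts a pointwise-integrated bound on the traceless second fundamental form $\mathring h = h - \frac{1}{2}Hg$ of the stable CMC sphere $\Sigma$. The strategy is to invoke the stability inequality in its Christodoulou--Yau guise: for a stable CMC surface in a $3$-manifold one tests the second variation operator with the constant function $1$ and, after using the Gauss equation to trade the ambient curvature term for the Gauss curvature of $\Sigma$ plus $|\mathring h|^2$ plus a term in $H$, and then applying Gauss--Bonnet together with $\Sigma \cong S^2$, one obtains an estimate of the schematic form
\begin{align} \label{CY}
\int_\Sigma |\mathring h|^2 + \int_\Sigma \big(\Ric(\nu,\nu) + \tfrac{1}{2} R - K_{S^2}\text{-type terms}\big) \leq \text{(curvature normalization)} + \big(H^2 \cdot |\Sigma| - 16\pi\big)\text{-type remainder}.
\end{align}
This is exactly the inequality labelled \eqref{CY} that the proof cites. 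The point is that for the model Schwarzschild-anti-deSitter metric the bracketed ambient-curvature combination vanishes identically (this is what makes $H = 2$ spheres special and what makes the Hawking mass monotone), so that on $(M,g)$ it is controlled by the deviation of $(M,g)$ from the model — precisely the quantity estimated by $R + 6 = O(e^{-5r})$ together with the asymptotics of $\Ric$.

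Concretely, I would proceed as follows. First, recall or reproduce the stable-CMC Christodoulou--Yau estimate \eqref{CY}, which bounds $\int_\Sigma |\mathring h|^2$ by a sum of (i) an integral over $\Sigma$ of ambient curvature quantities measuring the failure of the Einstein/curvature-normalization conditions of Schwarzschild-anti-deSitter, and (ii) the ``isoperimetric deficit'' term $16\pi - (H^2 - 4)|\Sigma|$. For (ii), the hypothesis $m_H(\Sigma) \leq \Lambda$ is used through the first inequality in \eqref{boundH}, namely $16\pi - (H^2-4)|\Sigma| \leq O(|\Sigma|^{-1/2})$, which is exactly the advertised $O(|\Sigma|^{-1/2})$ error term in \eqref{ringh2CY}. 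For (i), one uses the assumed decay of the metric toward the Schwarzschild-anti-deSitter model: the ambient curvature terms appearing in \eqref{CY} combine into an expression that vanishes on the model and whose deviation is governed by $R + 6$ and the decay of the trace-free Ricci curvature, all of which are $O(e^{-5r})$ by the asymptotic hypotheses (the scalar-curvature part being literally $R + 6 = O(e^{-5r})$ as written). Integrating $O(e^{-5r})$ against the area element of $\Sigma$ gives the $O(1)\int_\Sigma e^{-5r}$ term. Assembling (i) and (ii) yields \eqref{ringh2CY}.

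**Main obstacle.** The routine calculus here is negligible; the genuine content is entirely bookkeeping about what precisely goes into \eqref{CY} and verifying that every ambient-curvature term that survives the Gauss--Bonnet reduction is controlled by $R + 6$ and by the sub-leading Ricci decay — i.e. that nothing of order $e^{-2r}$ or worse (which one might naively fear from the $O(m e^{-3r})$ or similar metric perturbation terms) is left uncancelled after using that $\Sigma$ is approximately a model coordinate sphere and that $H \approx 2$. In other words, the delicate point is that the combination appearing in the stability inequality is \emph{exactly} the one that is quadratically small in the deviation from the model near the zero set $H = 2$, so that the linear-in-$m$ contributions cancel and only the genuinely higher-order $O(e^{-5r})$ scalar-curvature term and the $O(|\Sigma|^{-1/2})$ Hawking-deficit term remain. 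Given \eqref{CY} and the stated decay $R + 6 = O(e^{-5r})$ as inputs — which the proof explicitly invokes — this reduces to a one-line citation, which is why the written proof is a single sentence.
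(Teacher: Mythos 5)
Your proposal is correct and takes the same route as the paper: plug $R + 6 = O(e^{-5r})$ into \eqref{CY} and bound the Hawking deficit $16\pi - (H^2 - 4)|\Sigma|$ by $O(|\Sigma|^{-1/2})$ via the hypothesis $m_H(\Sigma) \leq \Lambda$, i.e.\ \eqref{boundH}. One clarification: the ``main obstacle'' you flag is a phantom. In the paper's \eqref{CY} the ambient curvature has \emph{already} been reduced, via the Gauss equation and Gauss--Bonnet, to exactly $\tfrac{2}{3}\int_\Sigma (R+6)$ on the left-hand side; there is no trace-free Ricci term and no residual $O(e^{-2r})$ or $O(m\, e^{-3r})$ piece to worry about cancelling, so no extra bookkeeping is required and the one-line citation is genuinely complete.
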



\begin {lemma} 
We have 
\begin{align} \label{decaye3r}
\int_\Sigma e^{- 3 \, r} = O (|\Sigma|^{-1/2}).
\end{align}
\begin {proof}
Integrating \eqref{expansionGauss} and using the Gauss-Bonnet formula, we obtain 
\[
16 \, \pi - (H^2 - 4) \, |\Sigma|= - 2 \, \int_\Sigma |\mathring h|^2 + (8 \, m +o(1))  \int_\Sigma e^{- 3 \, r} .
\]
The estimate follows in conjunction with \eqref{ringh2CY}, using that $m > 0$.
\end {proof}
\end {lemma}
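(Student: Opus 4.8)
The plan is to read off the decay of $\int_\Sigma e^{-3r}$ from the Gauss equation, using the Hawking mass bound to control one side of an integrated identity and the nonnegativity of $\int_\Sigma |\mathring h|^2$ together with \eqref{ringh2CY} to control the other, with the positivity of $m$ entering decisively at the end. First I would recall the pointwise expansion \eqref{expansionGauss} of the Gauss curvature $K$ of $\Sigma$, which is of the schematic form
\[
K = \tfrac14 (H^2 - 4) - \tfrac12 \, |\mathring h|^2 + (2 \, m + o(1)) \, e^{-3r},
\]
where $o(1)$ denotes a function on $\Sigma$ tending to $0$ as $r \to \infty$; this comes from $2 K = R - 2 \, \Ric(\nu, \nu) + H^2 - |h|^2$, the identity $|h|^2 = |\mathring h|^2 + \tfrac12 H^2$, and the asymptotics of Schwarzschild-anti-deSitter. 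Integrating over $\Sigma$, using Gauss--Bonnet (so that $\int_\Sigma K = 4 \, \pi$, since $\Sigma$ is a topological sphere) and the fact that $H$ is constant, I obtain the identity displayed in the statement,
\[
16 \, \pi - (H^2 - 4) \, |\Sigma| = - 2 \int_\Sigma |\mathring h|^2 + (8 \, m + o(1)) \int_\Sigma e^{-3r} .
\]

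Next I would solve this for $\int_\Sigma e^{-3r}$ and estimate the right-hand side. The term $16 \, \pi - (H^2 - 4) \, |\Sigma|$ is $\leq (16 \, \pi)^{3/2} \, \Lambda \, |\Sigma|^{-1/2} = O(|\Sigma|^{-1/2})$ by \eqref{eqn:Hawkingmassbound} and $m_H(\Sigma) \leq \Lambda$; this is the first bound in \eqref{boundH}. For the curvature term I would combine \eqref{ringh2CY} with the elementary observation that $r \geq r_0$ on $\Sigma$ (as $\Sigma$ encloses $B_{r_0}$), so that $e^{-5r} \leq e^{-2 r_0} \, e^{-3r}$ there; this gives $\int_\Sigma |\mathring h|^2 \leq O(e^{-2 r_0}) \int_\Sigma e^{-3r} + O(|\Sigma|^{-1/2})$. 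Plugging both estimates back in yields
\[
\left( 8 \, m + o(1) - O(e^{-2 r_0}) \right) \int_\Sigma e^{-3r} \leq O(|\Sigma|^{-1/2}).
\]

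Finally I would conclude by absorption. Since $m > 0$, once $r_0$ is fixed large enough that the term $O(e^{-2 r_0})$ is at most $m$, and once $|\Sigma|$ is correspondingly large (so that the $o(1)$ contribution is at most $m$ as well --- recall that $\Sigma$ enclosing $B_{r_0}$ with $r_0$ large forces $|\Sigma|$ to be large), the coefficient on the left-hand side above is bounded below by $4 \, m > 0$. Dividing through gives $\int_\Sigma e^{-3r} = O(|\Sigma|^{-1/2})$, which is \eqref{decaye3r}. I expect the only genuine subtlety to be this last step: it is exactly where the hypothesis $m > 0$ is used in an essential way, and one must be sure that $2 \int_\Sigma |\mathring h|^2$ and the $o(1)$ error are honestly of lower order than $8 \, m \int_\Sigma e^{-3r}$, rather than comparable to it --- which is what \eqref{ringh2CY}, the smallness of $e^{-2 r_0}$, and $m > 0$ together ensure.
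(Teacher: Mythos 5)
Your proof is correct and follows the paper's argument: integrate \eqref{expansionGauss} against Gauss--Bonnet, bound $16\pi-(H^2-4)|\Sigma|$ via the Hawking mass hypothesis, control $\int_\Sigma|\mathring h|^2$ by \eqref{ringh2CY}, and absorb the $\int_\Sigma e^{-5r}$ contribution into the $\int_\Sigma e^{-3r}$ term using $r\geq r_0$ on $\Sigma$ and $m>0$; this is exactly the absorption that the paper's one-line ``follows in conjunction with \eqref{ringh2CY}, using that $m>0$'' is leaving implicit. (The numerical coefficient in front of $m$ is immaterial --- you correctly flag it as schematic --- and what you have is consistent with the paper's displayed identity.)
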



\begin {lemma} We have 
\begin{align} \label{goodHestimateHawking}
16 \, \pi - (H^2 - 4) \, |\Sigma| = O (|\Sigma|^{-1/2}) \qquad \text{ and } \qquad \frac{4 \, \pi}{|\Sigma|} = (H - 2) + O(|\Sigma|^{-3/2}).
\end{align}
\begin {proof}
Combining \eqref{CY} with \eqref{decaye3r}, we obtain the lower bound $m_{H} (\Sigma) \geq - o (1)$ as $r_0 (\Sigma) \to \infty$. 
\end {proof}
\end {lemma}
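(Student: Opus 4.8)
The plan is to promote the a priori \emph{upper} bound $m_H(\Sigma) \le \Lambda$ to two-sided control on $m_H(\Sigma)$, and then to unwind the algebraic definition \eqref{eqn:Hawkingmassbound} of the Hawking mass, together with \eqref{boundH}, to extract both displayed estimates. Since the upper bound is the standing hypothesis, the only real work is a lower bound on $m_H(\Sigma)$, and this is exactly where positivity of the mass enters.

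For the lower bound I would start, as in the proof of \eqref{decaye3r}, from the identity obtained by integrating \eqref{expansionGauss} over $\Sigma$ and applying Gauss--Bonnet,
\[
16 \, \pi - (H^2 - 4)\, |\Sigma| = - 2 \int_\Sigma |\mathring h|^2 + (8\,m + o(1)) \int_\Sigma e^{-3r}.
\]
Because $\Sigma$ is disjoint from $B_{r_0}$, we have $e^{-5r} \le e^{-2 r_0}\, e^{-3r}$ on $\Sigma$, so \eqref{ringh2CY} and \eqref{decaye3r} combine to give $\int_\Sigma |\mathring h|^2 = O(|\Sigma|^{-1/2})$; and since $m > 0$ the term $(8\,m + o(1)) \int_\Sigma e^{-3r}$ is nonnegative once $r_0$ is large. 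Discarding that nonnegative term then yields $16 \, \pi - (H^2-4)|\Sigma| \ge - O(|\Sigma|^{-1/2})$, i.e. the lower bound $m_H(\Sigma) \ge - o(1)$ as $r_0(\Sigma) \to \infty$. This is precisely what \eqref{CY} gives when paired with \eqref{decaye3r}.

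Now $|m_H(\Sigma)|$ is bounded by a constant depending only on $(M,g)$ and $\Lambda$, so the relation $m_H(\Sigma) = (16\,\pi)^{-3/2}\, |\Sigma|^{1/2}\big(16\,\pi - (H^2-4)|\Sigma|\big)$ immediately gives the first estimate $16\,\pi - (H^2-4)|\Sigma| = O(|\Sigma|^{-1/2})$. For the second, this forces $H^2 - 4 = 16\,\pi\,|\Sigma|^{-1} + O(|\Sigma|^{-3/2}) \to 0$, and together with the one-sided bound $H - 2 \ge 4\,\pi\,|\Sigma|^{-1} - O(|\Sigma|^{-3/2}) > 0$ from \eqref{boundH} this forces $H + 2 = 4 + O(|\Sigma|^{-1})$. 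Dividing $(H-2)(H+2)|\Sigma| = 16\,\pi + O(|\Sigma|^{-1/2})$ by $(H+2)|\Sigma|$ and expanding $\tfrac{1}{H+2} = \tfrac14 + O(|\Sigma|^{-1})$ then produces $H - 2 = \tfrac{4\,\pi}{|\Sigma|} + O(|\Sigma|^{-3/2})$.

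I do not expect a serious obstacle: the substantive analytic input, \eqref{CY}, is already available, and everything else is bookkeeping of error terms. The one genuine point is the sign argument in the second paragraph, namely that positivity of $m$ turns the a priori upper bound on the Hawking mass into a matching lower bound; the hypothesis $m > 0$ is essential precisely because it is what gives the term $(8\,m + o(1))\int_\Sigma e^{-3r}$ the right sign.
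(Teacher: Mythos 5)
Your proof is correct and follows the same route as the paper: the key step is the lower bound $m_H(\Sigma) \geq -o(1)$, obtained by combining \eqref{CY} (with $R+6 = O(e^{-5r})$, equivalently \eqref{ringh2CY}) with \eqref{decaye3r}, after which the two displayed estimates are algebra. Your detour through \eqref{expansionGauss} re-derives material already packaged in \eqref{decaye3r} and is thus slightly redundant, but it is harmless and the substance matches the paper's one-line proof.
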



\begin {lemma} We have 
\begin{align} \label{improvedisoperimetric}
\int_\Sigma 4 \, e^{- 2 \, r} \geq 4 \, \pi + O (|\Sigma|^{-1/2}).
\end{align}
\begin{proof}
We use \eqref{decaye3r} to sharpen the steps leading to Proposition 4.2 (iii) in \cite{Neves-Tian:2009}:  We denote by $\Sigma_\delta \subset B_1(0)$ the surface with $\Psi (\Sigma_\delta) = \Sigma$ and by $\Omega_\delta$ the compact region enclosed by it. (We recall this notation in Appendix \ref{sec:initialdata}.) From \eqref{decaye3r}, we obtain 
\begin {align} \label{estimatingareadelta}
\int_\Sigma 4 \, e^{- 2 \, r} \, d \, \bar \mu = \area_\delta (\Sigma_\delta) + O (|\Sigma|^{-1/2}),
\end {align}
where we have also used \eqref{diskplaneaux} and 
\[
(1 + \cosh r)^{-2} = 4 \, e ^{- 2 \, r} + O (e^{- 3 \, r}).
\]
Moreover,  
\begin{align} \label{estimatingvolumdelta}
\vol_\delta (\Omega_\delta) = \frac{4 \, \pi}{3} - O (|\Sigma|^{-1/2}).
\end{align}
Indeed, 
\[
\frac{4 \, \pi}{3} = \vol_\delta (\Omega_\delta) + \vol_\delta (B_1(0) \setminus \Omega_\delta)
\]
and 
\[
 \vol_\delta (B_1(0) \setminus \Omega_\delta) = O(1) \int_{\Sigma_\delta} (1 - s^2) = O(1) \int_\Sigma e^{-3 \, r} = O (|\Sigma|^{-1/2}).
\]
The claim follows from the Euclidean isoperimetric inequality
\[
\area_\delta (\Sigma_\delta) \geq 4 \, \pi \, \left( \frac{3  \vol_\delta (\Omega_\delta) }{4 \, \pi}  \right)^{\frac{2}{3}}.\qedhere
\]
\end{proof}
\end {lemma}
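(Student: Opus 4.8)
The plan is to derive the refined isoperimetric bound \eqref{improvedisoperimetric} by transplanting the surface $\Sigma$ to the unit ball via the coordinate diffeomorphism $\Psi$ underlying the definition of asymptotically Schwarzschild-anti-deSitter data, and then invoking the sharp Euclidean isoperimetric inequality there. Concretely, I would write $\Sigma = \Psi(\Sigma_\delta)$ with $\Sigma_\delta \subset B_1(0)$ enclosing the Euclidean region $\Omega_\delta$, and compare the three relevant quantities: the weighted area $\int_\Sigma 4\, e^{-2r}\, d\bar\mu$, the Euclidean area $\area_\delta(\Sigma_\delta)$, and the Euclidean volume $\vol_\delta(\Omega_\delta)$. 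The point of the exercise is that each comparison introduces only an error of size $O(|\Sigma|^{-1/2})$, thanks to the a priori estimate \eqref{decaye3r} on $\int_\Sigma e^{-3r}$ established in the previous lemma; this is the improvement over the cruder bounds in \cite{Neves-Tian:2009}, which would only give an $o(1)$ error.

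The first step is to control the area: expanding the conformal factor of the Schwarzschild-anti-deSitter metric in the hyperbolic-ball model, one has $(1 + \cosh r)^{-2} = 4\, e^{-2r} + O(e^{-3r})$, so the weighted area $\int_\Sigma 4\, e^{-2r}\, d\bar\mu$ agrees with $\area_\delta(\Sigma_\delta)$ up to $\int_\Sigma O(e^{-3r}) = O(|\Sigma|^{-1/2})$ by \eqref{decaye3r} (together with the auxiliary comparison \eqref{diskplaneaux} relating the hyperbolic and Euclidean area elements on the ball). The second step is to control the enclosed volume: writing $\tfrac{4\pi}{3} = \vol_\delta(\Omega_\delta) + \vol_\delta(B_1(0)\setminus\Omega_\delta)$, one estimates the complementary volume by noting that $B_1(0)\setminus\Omega_\delta$ is a thin shell near $\partial B_1(0)$, so $\vol_\delta(B_1(0)\setminus\Omega_\delta) = O(1)\int_{\Sigma_\delta}(1 - s^2)$ where $s$ is the radial coordinate; converting this back via $\Psi$ gives $O(1)\int_\Sigma e^{-3r} = O(|\Sigma|^{-1/2})$ again. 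Hence $\vol_\delta(\Omega_\delta) = \tfrac{4\pi}{3} - O(|\Sigma|^{-1/2})$.

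The final step is purely formal: apply the Euclidean isoperimetric inequality
\[
\area_\delta(\Sigma_\delta) \geq 4\,\pi\left(\frac{3\,\vol_\delta(\Omega_\delta)}{4\,\pi}\right)^{2/3},
\]
substitute $\vol_\delta(\Omega_\delta) = \tfrac{4\pi}{3} - O(|\Sigma|^{-1/2})$, expand the power to first order to get $\area_\delta(\Sigma_\delta) \geq 4\pi + O(|\Sigma|^{-1/2})$, and then feed in the area comparison from the first step to conclude $\int_\Sigma 4\, e^{-2r}\, d\bar\mu \geq 4\pi + O(|\Sigma|^{-1/2})$.

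The main obstacle is bookkeeping rather than conceptual: one has to be careful that the implied constants in all the $O$-terms depend only on $(M,g)$ and $\Lambda$ and not on $\Sigma$, and that the passage between the manifold $\Sigma$ and its Euclidean image $\Sigma_\delta$ (which involves the decay rates built into the definition of asymptotically Schwarzschild-anti-deSitter in Appendix \ref{sec:initialdata}) is uniform. In particular, one must verify that the change-of-variables estimates \eqref{diskplaneaux} and the shell estimate $\int_{\Sigma_\delta}(1-s^2) = O(1)\int_\Sigma e^{-3r}$ hold with constants independent of the surface, which is where the earlier a priori control on $r_0(\Sigma) \to \infty$ and the Hawking mass bound are implicitly used. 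Everything else is a routine Taylor expansion.
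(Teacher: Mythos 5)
Your proposal matches the paper's argument essentially line for line: the same transplantation to $\Sigma_\delta \subset B_1(0)$ via $\Psi$, the same area comparison $\int_\Sigma 4\,e^{-2r}\,d\bar\mu = \area_\delta(\Sigma_\delta) + O(|\Sigma|^{-1/2})$ obtained from $(1+\cosh r)^{-2} = 4e^{-2r} + O(e^{-3r})$ together with \eqref{decaye3r}, the same shell-volume estimate $\vol_\delta(B_1(0)\setminus\Omega_\delta) = O(1)\int_\Sigma e^{-3r}$, and the same appeal to the Euclidean isoperimetric inequality. Nothing to add.
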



\begin {corollary}
We have
\begin{align}  \label{improvedgradientdecay}
\int_\Sigma \left( 1 - \langle \nu, \nabla r\rangle \right)^2 &= O ( |\Sigma|^{-1/2} ) \\
\int_\Sigma e^{- 2 \, r} \, |\nabla_\Sigma r|^2 &= O ( |\Sigma|^{-1/2} ) \label{improvedgradientdecayII}\\
\int_\Sigma 4 \, e^{- 2 \, r} &= 4 \, \pi + O (|\Sigma|^{-1/2}). \label{eqn:4e2r}
\end{align}
\begin{proof}
We use our stronger estimates to sharpen the steps leading to Proposition 4.2 in \cite{Neves-Tian:2009}. 

Integrating \eqref{deltar}, but using \eqref{boundH} and \eqref{decaye3r} to bound the error, we obtain  
\begin{align} \label{aux:4e2r}
\int_\Sigma \left( 4 - 2 \, |\nabla_\Sigma r|^2\right) \, e^{- 2 \, r} + \frac{4 \, \pi }{|\Sigma|} \, \int_\Sigma \left( 1 - \langle \nu, \nabla r \rangle \right) + \int_\Sigma \left( 1 - \langle \nu, \nabla r \rangle \right)^2 = 4 \, \pi + O (|\Sigma|^{-1/2} ). 
\end{align}
Using \eqref{improvedisoperimetric}, we conclude
\[
- 2 \int_\Sigma |\nabla_\Sigma r|^2 \,  e^{- 2 \, r} +  \frac{4 \, \pi}{|\Sigma|} \,  \int_\Sigma \left( 1 - \langle \nu, \nabla r \rangle \right)  + \int_\Sigma \left( 1 - \langle \nu, \nabla r \rangle \right)^2 \leq O  (|\Sigma|^{-1/2} ).
\]
Using the pointwise estimates
\begin {align*}
\frac{4 \, \pi}{|\Sigma|} \, \left| 1 - \langle \nu, \nabla r \rangle \right| &\leq \frac{1}{4}\left( 1 - \langle \nu, \nabla r \rangle \right)^2 + O  ( |\Sigma|^{- 2} ) \\
2 \, e^{- 2 \, r} \,  |\nabla_\Sigma r|^2 &\leq \frac{1}{4} \left( 1 - \langle \nu, \nabla r \rangle \right)^2 + O (  e^{-4 \, r} )
\end{align*}
and \eqref{decaye3r}, we obtain \eqref{improvedgradientdecay}. Estimate \eqref{improvedgradientdecayII} follows from this and Cauchy-Schwarz. We obtain \eqref{eqn:4e2r} by using \eqref{improvedgradientdecay} and \eqref{improvedgradientdecayII} in  \eqref{aux:4e2r}.
\end{proof}
\end {corollary}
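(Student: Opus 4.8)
The plan is to extract all three estimates from a single integrated identity, obtained by re-running the argument for Proposition 4.2 in \cite{Neves-Tian:2009} while carrying every error term to the precision $O(|\Sigma|^{-1/2})$ that is now available from \eqref{decaye3r}. Concretely, I would first integrate the pointwise identity \eqref{deltar} for $\Delta_\Sigma r$ over $\Sigma$; since $\int_\Sigma \Delta_\Sigma r = 0$, and using the Hawking mass bound in the form \eqref{boundH} to replace $\tfrac{4\pi}{|\Sigma|}$ by $H-2$ up to $O(|\Sigma|^{-3/2})$ and \eqref{decaye3r} to absorb the $e^{-3r}$-type remainders, this should produce the master identity
\[
\int_\Sigma \left(4 - 2|\nabla_\Sigma r|^2\right) e^{-2r} + \frac{4\pi}{|\Sigma|}\int_\Sigma \left(1 - \langle\nu,\nabla r\rangle\right) + \int_\Sigma \left(1 - \langle\nu,\nabla r\rangle\right)^2 = 4\pi + O(|\Sigma|^{-1/2}),
\]
which is \eqref{aux:4e2r}.

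Next I would feed in the improved (reverse) isoperimetric-type bound \eqref{improvedisoperimetric}, namely $\int_\Sigma 4e^{-2r} \geq 4\pi + O(|\Sigma|^{-1/2})$, to rearrange \eqref{aux:4e2r} into
\[
-2\int_\Sigma |\nabla_\Sigma r|^2 e^{-2r} + \frac{4\pi}{|\Sigma|}\int_\Sigma \left(1-\langle\nu,\nabla r\rangle\right) + \int_\Sigma \left(1-\langle\nu,\nabla r\rangle\right)^2 \leq O(|\Sigma|^{-1/2}).
\]
The key analytic move is then a Young-inequality absorption. Pointwise one has $\tfrac{4\pi}{|\Sigma|}\,|1-\langle\nu,\nabla r\rangle| \leq \tfrac14(1-\langle\nu,\nabla r\rangle)^2 + O(|\Sigma|^{-2})$, and, using $|\nabla_\Sigma r|^2 = 1 - \langle\nu,\nabla r\rangle^2 + O(e^{-2r})$ together with $1 - \langle\nu,\nabla r\rangle^2 \leq 2(1-\langle\nu,\nabla r\rangle)$, also $2e^{-2r}|\nabla_\Sigma r|^2 \leq \tfrac14(1-\langle\nu,\nabla r\rangle)^2 + O(e^{-4r})$. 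Integrating these, the two \emph{bad} terms in the displayed inequality contribute at most $\tfrac12 \int_\Sigma(1-\langle\nu,\nabla r\rangle)^2$ plus errors of size $|\Sigma|\cdot O(|\Sigma|^{-2})$ and $O(\int_\Sigma e^{-4r}) = O(|\Sigma|^{-1/2})$ (by \eqref{decaye3r}); moving the $\tfrac12\int_\Sigma(1-\langle\nu,\nabla r\rangle)^2$ back to the left gives \eqref{improvedgradientdecay}. Estimate \eqref{improvedgradientdecayII} then follows either by integrating the pointwise bound for $2e^{-2r}|\nabla_\Sigma r|^2$ directly, or by Cauchy--Schwarz from \eqref{improvedgradientdecay}; and \eqref{eqn:4e2r} follows by substituting \eqref{improvedgradientdecay} and \eqref{improvedgradientdecayII} back into \eqref{aux:4e2r}, after noting that Cauchy--Schwarz bounds $\tfrac{4\pi}{|\Sigma|}\int_\Sigma(1-\langle\nu,\nabla r\rangle)$ by $O(|\Sigma|^{-3/4})$.

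The main obstacle is the very first step: one must carefully re-derive \eqref{aux:4e2r}, tracking each error term and verifying that it is genuinely $O(|\Sigma|^{-1/2})$ and not merely $o(1)$ — this is exactly where the newly-available \eqref{decaye3r} replaces the weaker $\int_\Sigma e^{-3r} = o(1)$ used in \cite{Neves-Tian:2009}. Once \eqref{aux:4e2r} is in hand the absorption is routine, the only point requiring care being to check that the total coefficient of $\int_\Sigma(1-\langle\nu,\nabla r\rangle)^2$ produced by the two Young inequalities is strictly less than $1$, so that the term can indeed be absorbed.
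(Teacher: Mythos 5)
Your proposal follows the paper's proof essentially step for step: integrate \eqref{deltar} using \eqref{boundH} and \eqref{decaye3r} to get the master identity \eqref{aux:4e2r}, subtract the improved isoperimetric bound \eqref{improvedisoperimetric}, absorb the two lower-order terms via Young's inequality with the identical pointwise estimates, and substitute back. The only (harmless) additions are your explicit justification of the bound $2e^{-2r}|\nabla_\Sigma r|^2 \leq \tfrac14(1-\langle\nu,\nabla r\rangle)^2 + O(e^{-4r})$ via $1-\langle\nu,\nabla r\rangle^2 \leq 2(1-\langle\nu,\nabla r\rangle)$, and the remark that the cross term $\tfrac{4\pi}{|\Sigma|}\int_\Sigma(1-\langle\nu,\nabla r\rangle)$ is $O(|\Sigma|^{-3/4})$ by Cauchy--Schwarz — both consistent with what the paper leaves implicit.
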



\begin {corollary} \label{cor:isoperimetrydelta} 
We have  
\begin{align*}
\area_\delta (\Sigma_\delta) &= 4 \, \pi + O (|\Sigma|^{-1/2}), \\
\vol_\delta (\Omega_\delta)  &= \frac{4 \, \pi}{3} + O (|\Sigma|^{-1/2})
\end{align*}
where we use the notation explained in Appendix \ref{sec:initialdata}.
\begin {proof} 
The Euclidean area estimate follows from \eqref{estimatingareadelta} together with \eqref{eqn:4e2r} and \eqref{decaye3r}. The Euclidean volume estimate is a restatement of \eqref{estimatingvolumdelta}.

\end {proof}
\end {corollary}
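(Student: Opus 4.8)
The plan is to transfer the two sharp estimates already in hand---the weighted area bound \eqref{eqn:4e2r} and the decay \eqref{decaye3r}---onto the Euclidean quantities attached to the surface $\Sigma_\delta = \Psi^{-1}(\Sigma) \subset B_1(0)$ and the region $\Omega_\delta$ that it bounds. By this point all of the genuine analysis has been carried out in the preceding lemmas and corollaries (the chain of estimates running from \eqref{decaye3r} through \eqref{improvedisoperimetric}, \eqref{improvedgradientdecay}, and \eqref{eqn:4e2r}); what remains is to track error terms under the passage between $g$ and the background Euclidean metric $\delta$ on the end.

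For the Euclidean area, I would begin from the comparison \eqref{estimatingareadelta}, which already writes $\area_\delta(\Sigma_\delta) = \int_\Sigma 4 \, e^{-2r}\, d\bar\mu + O(|\Sigma|^{-1/2})$; recall that this used \eqref{diskplaneaux}, the expansion $(1 + \cosh r)^{-2} = 4 \, e^{-2r} + O(e^{-3r})$, and \eqref{decaye3r} to absorb the discrepancy between the pulled-back Euclidean area element and $4 \, e^{-2r}$ times the induced area measure of $(\Sigma, g)$. Feeding \eqref{eqn:4e2r} into this, and using \eqref{decaye3r} once more to reconcile the area measures, gives $\area_\delta(\Sigma_\delta) = 4 \, \pi + O(|\Sigma|^{-1/2})$. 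For the Euclidean volume there is essentially nothing left to do: the bound \eqref{estimatingvolumdelta}, obtained in the proof of \eqref{improvedisoperimetric} by subtracting the thin outer shell $\vol_\delta(B_1(0) \setminus \Omega_\delta) = O(1) \int_\Sigma e^{-3r} = O(|\Sigma|^{-1/2})$ from the Euclidean volume $\frac{4\pi}{3}$ of the unit ball, is---up to the harmless sign in the $O$-term---precisely the claimed equality $\vol_\delta(\Omega_\delta) = \frac{4\pi}{3} + O(|\Sigma|^{-1/2})$.

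I do not expect a genuine obstacle at this stage; the difficulty was all upstream, in propagating the hypothesis $m_H(\Sigma) \leq \Lambda$ down to the decay \eqref{decaye3r} and in sharpening the Euclidean isoperimetric inequality to the near-equalities \eqref{improvedisoperimetric} and \eqref{eqn:4e2r}. The one point worth a little care is the uniformity of the conformal factor relating $g$ and $\delta$ near infinity: one should confirm that the Euclidean area element on $\Sigma_\delta$ and $4 \, e^{-2r}\, d\bar\mu$ agree pointwise up to a factor $1 + O(e^{-r})$, so that converting between $\int_\Sigma 4 \, e^{-2r}$ and $\area_\delta(\Sigma_\delta)$ costs only $O(\int_\Sigma e^{-3r})$, which is $O(|\Sigma|^{-1/2})$ by \eqref{decaye3r}; the volume shell is controlled the same way and is exactly what \eqref{estimatingvolumdelta} records.
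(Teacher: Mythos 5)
Your proposal is correct and follows exactly the paper's route: the Euclidean area estimate is obtained by substituting \eqref{eqn:4e2r} into the comparison \eqref{estimatingareadelta} (whose error was controlled by \eqref{decaye3r}), and the Euclidean volume estimate is just a restatement of \eqref{estimatingvolumdelta}. The extra remark about the conformal factor is a sound sanity check but is already baked into \eqref{estimatingareadelta}.
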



We now rescale $\Sigma$ homothetically, 
\begin{align} \label{eqn:hatg}
\hat g_\Sigma = \left( \sinh \hat r \right)^{-2} \, \bar g|_\Sigma
\end{align}
where $\hat r >$ is the hyperbolic \textit{area radius} 
\[
\area_{\bar g}(\Sigma) = 4 \, \pi \, (\sinh \hat r)^2.
\]
The same rescaling is studied by A.~Neves and G.~Tian in Section 5 of \cite{Neves-Tian:2009}. Instead of their pinching estimate \eqref{pinching}, we use our bound \eqref{eqn:Hawkingmassbound} on the Hawking mass to estimate the Gaussian curvature of $\hat g_\Sigma$ and to produce a good conformal parametrization. 


\begin {lemma} Let $p$ be such that $1 < p < 3/2$. As $\underline r(\Sigma) \to \infty$, 
\[
\| \hat K - 1 \|_{L^p(\hat \mu)} = o (1).
\]
\begin {proof}
First, by \eqref{expansionGauss}, 
\[
\bar K = K + O (e^{- 3 \, r}) = \frac{1}{4} \,  (H^2 - 4) - \frac{1}{2} \, |\mathring {h}|^2 + O (e^{- 3\, r}).
\]
In conjunction with \eqref{goodHestimateHawking}, we obtain
\begin {align*}
\hat K =  (\sinh \hat r )^2 \, \bar K 
&= (\sinh \hat r )^2 \left( \frac{1}{4}  \, (H^2 - 4)- \frac{1}{2} \, |\mathring {h}|^2 + O (e^{- 3 \, r} ) \right) \\
&= (\sinh \hat r )^2 \left( \frac{4 \, \pi}{|\Sigma|} - \frac{1}{2} \, |\mathring {h}|^2 + O (e^{- 3 \, \hat r}) + O (e^{- 3\, r}) \right) \\
&= 1+ O (e^{-  \hat r}) +  |\mathring h|^2 \, O (e^{2 \, \hat r }) + O (e^{2 \, \hat r}  e^{- 3 \, r}).
\end {align*}
The assertion follows from
\begin {align*}
\int_\Sigma (e^{2 \, \hat r} e^{- 3 \, r})^p \, d  \hat \mu &= O (  e^{2 \, (p - 1) \, \hat r}) \int_\Sigma e^{- 3 \, p \, r}  = O ( e^{2 \, (p - 1) \, \hat r}) \int_\Sigma e^{- 3 \, r} \\& =  O (e^{(2 \, p - 3) \,  \hat r})
\end{align*}
and 
\begin{align*}
\int_\Sigma e^{2 \, \hat r \, p} \,  |\mathring h|^{2 \, p} \, d  \hat \mu &= O (  e^{2 \, (p - 1) \, \hat r}) \int_\Sigma |\mathring h|^{2 \, p}   = o ( e^{2 \, (p - 1) \, \hat r}) \int_\Sigma |\mathring h|^{2}   \\ & = o ( e^{(2 \, p - 3) \, \hat r} )
\end {align*}
where we have used \eqref{eqn:secondffbound}. 
\end {proof}
\end {lemma}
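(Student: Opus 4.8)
The plan is to turn the claim into an explicit computation of $\hat K$ from the Gauss equation together with the sharpened estimates \eqref{ringh2CY}, \eqref{decaye3r}, and \eqref{goodHestimateHawking} established above, and then to control the resulting error terms in $L^p(\hat\mu)$ one at a time.

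First I would use the curvature expansion \eqref{expansionGauss} to write, on $\Sigma$, $\bar K = \tfrac14 (H^2 - 4) - \tfrac12 |\mathring h|^2 + O(e^{-3r})$, and then insert the improved mean curvature identity $\tfrac14(H^2 - 4) = \tfrac{4\pi}{|\Sigma|} + O(|\Sigma|^{-3/2})$ supplied by \eqref{goodHestimateHawking}. Multiplying by $(\sinh \hat r)^2$, using the definition of the area radius (so that $(\sinh \hat r)^2 \, \tfrac{4\pi}{|\Sigma|} = 1 + O(e^{-\hat r})$) and $(\sinh \hat r)^2 = O(e^{2\hat r})$, I arrive at
\[
\hat K = (\sinh \hat r)^2 \, \bar K = 1 + O(e^{-\hat r}) + |\mathring h|^2 \, O(e^{2\hat r}) + O(e^{2\hat r} \, e^{-3r}).
\]
The first error term contributes only $O(e^{-\hat r})$ to $\| \hat K - 1 \|_{L^p(\hat\mu)}$ since $\hat\mu(\Sigma) = 4\pi$.

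For the remaining two terms I would pass from $\hat\mu$ to the area measure of $\Sigma$ at the cost of a factor $(\sinh \hat r)^{-2} = O(e^{-2\hat r})$. The $e^{2\hat r} e^{-3r}$ term then gives
\[
\int_\Sigma \bigl( e^{2\hat r} e^{-3r} \bigr)^p \, d\hat\mu = O(e^{2(p-1)\hat r}) \int_\Sigma e^{-3pr} \leq O(e^{2(p-1)\hat r}) \int_\Sigma e^{-3r} = O(e^{(2p-3)\hat r}),
\]
using $e^{-3pr} \leq e^{-3r}$ (valid as $p > 1$ and $\Sigma$ lies outside $B_{r_0}$ with $r_0 > 1$) and \eqref{decaye3r}; this is $o(1)$ precisely because $p < 3/2$. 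For the $|\mathring h|^2 O(e^{2\hat r})$ term I would similarly obtain $O(e^{2(p-1)\hat r}) \int_\Sigma |\mathring h|^{2p}$, then use the uniform a priori smallness of $|\mathring h|$ on $\Sigma$ from \eqref{eqn:secondffbound} to bound $\int_\Sigma |\mathring h|^{2p} \leq (\sup_\Sigma |\mathring h|)^{2(p-1)} \int_\Sigma |\mathring h|^2 = o(1) \int_\Sigma |\mathring h|^2$, and \eqref{ringh2CY} together with \eqref{decaye3r} to bound $\int_\Sigma |\mathring h|^2 = O(|\Sigma|^{-1/2}) = O(e^{-\hat r})$; this yields $o(e^{(2p-3)\hat r}) = o(1)$. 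Summing the three contributions gives the lemma.

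I do not anticipate a genuine obstacle beyond keeping track of exponents, but there is one structural point worth isolating: the conformal factor $(\sinh \hat r)^{-2}$ that normalizes the area of $\Sigma$ with respect to $\hat g_\Sigma$ to $4\pi$ simultaneously multiplies $\bar K$ by $(\sinh \hat r)^2 \sim |\Sigma|$, so every error term in the expansion of $\bar K$ must decay strictly faster than $|\Sigma|^{-1}$ after the measure conversion. This is exactly why the Hawking mass hypothesis is invoked — it upgrades the naive $O(|\Sigma|^{-1/2})$ control of $\tfrac14(H^2-4) - \tfrac{4\pi}{|\Sigma|}$ to $O(|\Sigma|^{-3/2})$ — and the restriction $1 < p < 3/2$ is forced by the decay rate $\int_\Sigma e^{-3r} = O(|\Sigma|^{-1/2})$ of \eqref{decaye3r}.
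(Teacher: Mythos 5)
Your proof is correct and follows essentially the same path as the paper's: expand $\bar K$ via \eqref{expansionGauss}, replace $\tfrac14(H^2-4)$ by $\tfrac{4\pi}{|\Sigma|}$ up to $O(|\Sigma|^{-3/2})$ via \eqref{goodHestimateHawking}, multiply by $(\sinh\hat r)^2$, and control the two error terms $O(e^{2\hat r}e^{-3r})$ and $|\mathring h|^2\,O(e^{2\hat r})$ in $L^p(\hat\mu)$ using \eqref{decaye3r}, \eqref{ringh2CY}, and \eqref{eqn:secondffbound} with the exponent bookkeeping that makes $1<p<3/2$ the right range. The extra remarks you add (the measure conversion factor, $e^{-3pr}\le e^{-3r}$, and why the Hawking mass bound upgrades the decay) are just fleshed-out versions of steps the paper leaves implicit.
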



The following is now an immediate consequence of Theorem A.1 in \cite{Nerz:2014-characterization}.

\begin {corollary} \label{cor:uniformizationhat}
Fix $p$ with $1 < p < 3/2$.  There is a diffeomorphism $\hat \varphi : \mathbb{S}^2 \to \Sigma$ with 
\[
\hat \varphi^* \hat g_\Sigma = e^{2 \, \hat \beta} g_{\mathbb{S}^2}
\]
where, as $\underline r (\Sigma) \to \infty$, 
\begin{align} \label{estimatehatbeta}
\|\hat \beta\|_{W^{2, p} (\mathbb{S}^2)} = o (1).
\end{align}
\end {corollary}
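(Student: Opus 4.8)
The plan is to verify that the hypotheses of the quantitative uniformization theorem of Nerz (Theorem A.1 in \cite{Nerz:2014-characterization}) hold for the rescaled surface $(\Sigma, \hat g_\Sigma)$ and then simply to invoke it. Two ingredients are needed. First, $\Sigma$ is diffeomorphic to $\mathbb{S}^2$, since it is a constant mean curvature sphere. Second, $\hat g_\Sigma$ carries the canonical normalization of total area $4 \, \pi$: by the definition of the hyperbolic area radius $\hat r$ in \eqref{eqn:hatg}, one has $\area_{\hat g_\Sigma}(\Sigma) = (\sinh \hat r)^{-2} \, \area_{\bar g}(\Sigma) = 4 \, \pi$. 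In particular, the Gauss--Bonnet theorem gives $\int_\Sigma (\hat K - 1) \, d \hat \mu = 4 \, \pi - \area_{\hat g_\Sigma}(\Sigma) = 0$, so the preceding Lemma, which asserts $\| \hat K - 1 \|_{L^p(\hat \mu)} = o(1)$ as $\underline r(\Sigma) \to \infty$, furnishes exactly the smallness of the deviation of $\hat g_\Sigma$ from the round metric that Theorem A.1 requires as input.

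With these hypotheses in place, Theorem A.1 of \cite{Nerz:2014-characterization} produces a conformal diffeomorphism $\hat \varphi : \mathbb{S}^2 \to \Sigma$ with $\hat \varphi^* \hat g_\Sigma = e^{2 \, \hat \beta} g_{\mathbb{S}^2}$, together with a quantitative elliptic estimate of the form $\|\hat \beta\|_{W^{2,p}(\mathbb{S}^2)} \leq C(p) \, \|\hat K - 1\|_{L^p(\hat \mu)}$, valid once a normalization is imposed that removes the ambiguity coming from the action of the conformal group of $\mathbb{S}^2$ (in \cite{Nerz:2014-characterization} this gauge is fixed by a balancing/center-of-mass condition on $\hat \beta$). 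Combining this with the Lemma yields $\|\hat \beta\|_{W^{2,p}(\mathbb{S}^2)} = o(1)$ as $\underline r(\Sigma) \to \infty$, which is precisely the assertion of the corollary.

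The only points that require attention — and the reason the statement is recorded as a corollary rather than left to the reader — are matters of bookkeeping: one must check that the area normalization of $\hat g_\Sigma$ matches the convention used in \cite{Nerz:2014-characterization}, that the admissible exponent range in Theorem A.1 contains $1 < p < 3/2$, and that all implicit constants are uniform as $\underline r(\Sigma) \to \infty$ so that the $o(1)$ is genuinely an $o(1)$. None of these is a substantive obstacle; the analytic heart of the matter, namely the $W^{2,p}$ control of the conformal factor by the $L^p$ defect of the Gaussian curvature, is supplied verbatim by Theorem A.1 of \cite{Nerz:2014-characterization}.
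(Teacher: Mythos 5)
Your proof takes exactly the same route as the paper: the paper's entire justification is the single sentence preceding the corollary, namely that it is ``an immediate consequence of Theorem A.1 in \cite{Nerz:2014-characterization},'' given the preceding Lemma's bound $\|\hat K - 1\|_{L^p(\hat\mu)} = o(1)$. Your expansion — checking the area normalization $\area_{\hat g_\Sigma}(\Sigma) = 4\pi$, the Gauss--Bonnet balancing $\int_\Sigma (\hat K - 1)\,d\hat\mu = 0$, the genus-zero hypothesis, and the M\"obius gauge fixing — is correct bookkeeping that the paper leaves implicit, but the logical content is identical.
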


We also consider the conformal rescaling 
\begin{align} \label{eqn:tildeg}
\tilde g_\Sigma = \psi^{-2} \bar g|_\Sigma
\end{align}
where 
\[
\psi = 2 \, \cosh^2 \frac{r}{2} = 1 + \cosh r. 
\]

Note that $\Sigma$ with the Riemannian inner product $\tilde g_\Sigma$ is isometric to the Euclidean surface $\Sigma_\delta \subset B_1(0)$. Here we use the notation explained in Appendix \ref{sec:initialdata}.

The conformal rescaling \eqref{eqn:tildeg} is also considered by A.~Neves and G.~Tian in Section 6 of \cite{Neves-Tian:2010}. They use it in conjunction with a result of C.~De Lellis and S.~M\"uller \cite{DeLellis-Muller:2005} to show that $\Sigma$ is close to a coordinate sphere in the chart at infinity. In Proposition \ref{prop:tildeg} below, we apply results from \cite{DeLellis-Muller:2005} rather differently to obtain a suitable conformal parametrization of $\tilde g_\Sigma$.  


\begin {proposition} \label{prop:tildeg}
There is a diffeomorphism $\tilde \varphi : \mathbb{S}^2 \to \Sigma$ with 
\[
 \tilde \varphi^* \tilde g_\Sigma = e^{2 \, \tilde \beta} g_{\mathbb{S}^2}
 \]
where
 \begin{align} \label{estimatetildebeta}
 \|\tilde \beta\|_{L^\infty}^2 + \|\tilde \beta\|^2_{W^{1, 2} (\mathbb{S}^2)} = O ( |\Sigma|^{-1/2} ).
\end{align}
\begin {proof}
We have that 
\[
\bar g_\Sigma =  g_\Sigma + O (e^{- 3 \, r}) \qquad \text{ and } \qquad \bar h =  h + O (e^{- 3 \, r}). 
\]
Here we use the bound 
\[
\sup_{x \in \Sigma} |h(x)| = O (1)
\]
from \eqref{eqn:secondffbound} to obtain the second estimate. Using  also \eqref{ringh2CY} and \eqref{decaye3r}, we find
\[
 \int_\Sigma |\overline {\mathring h}|_{\bar g}^2 \, d  \bar \mu = \int_\Sigma |\mathring h|^2 + O ( |\Sigma|^{-1/2} ) = O (|\Sigma|^{-1/2}). 
\]
By conformal invariance,  
\[
\int_{\Sigma_\delta} |\mathring h_\delta|^2 \, d  \mu_\delta =  \int_\Sigma |\overline {\mathring h}|_{\bar g}^2 \, d  \bar \mu= O ( |\Sigma|^{-1/2} ). 
\]
By Corollary \ref{cor:isoperimetrydelta}, 
\[
\area_\delta (\Sigma_\delta) = 4 \, \pi + O ( |\Sigma|^{-1/2} ). 
\]
The result now follows from Proposition 3.2 in \cite{DeLellis-Muller:2005}. 
\end {proof}
\end {proposition}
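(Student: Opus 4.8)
The plan is to reduce everything to the De Lellis--Müller estimate, which provides a conformal parametrization of a closed surface in $\R^3$ by the round sphere with $W^{1,2}$ and $L^\infty$ control of the conformal factor, provided the surface has area close to $4\pi$ and small $L^2$-norm of the trace-free second fundamental form. Recall that $\Sigma$ with the metric $\tilde g_\Sigma = \psi^{-2} \bar g|_\Sigma$ is, by construction (see Appendix \ref{sec:initialdata}), isometric to the Euclidean surface $\Sigma_\delta \subset B_1(0) \subset \R^3$ via the map $\Psi$; hence it suffices to verify the two hypotheses of Proposition 3.2 in \cite{DeLellis-Muller:2005} for $\Sigma_\delta$, namely that $\area_\delta(\Sigma_\delta) = 4\pi + O(|\Sigma|^{-1/2})$ and $\int_{\Sigma_\delta} |\mathring h_\delta|^2 \, d\mu_\delta = O(|\Sigma|^{-1/2})$, and then transport the resulting conformal parametrization back to $\Sigma$.

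First I would establish the comparison between the induced metric $\bar g_\Sigma$ and the hyperbolic background metric $g_\Sigma$, and likewise between $\bar h$ and $h$, with errors of size $O(e^{-3r})$; the pointwise bound $\sup_\Sigma |h| = O(1)$ from \eqref{eqn:secondffbound} is what makes the $O(e^{-3r})$ error in the second fundamental form legitimate. Combining this with the decay estimates already in hand --- \eqref{ringh2CY} for $\int_\Sigma |\mathring h|^2$ and \eqref{decaye3r} for $\int_\Sigma e^{-3r}$ --- gives
\[
\int_\Sigma |\overline{\mathring h}|^2_{\bar g} \, d\bar\mu = \int_\Sigma |\mathring h|^2 + O(|\Sigma|^{-1/2}) = O(|\Sigma|^{-1/2}).
\]
The trace-free part of the second fundamental form, weighted against the surface measure, is conformally invariant for surfaces in a conformally flat ambient space, so this quantity equals $\int_{\Sigma_\delta} |\mathring h_\delta|^2 \, d\mu_\delta$, giving the first hypothesis. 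For the area hypothesis I would simply quote Corollary \ref{cor:isoperimetrydelta}, which already records $\area_\delta(\Sigma_\delta) = 4\pi + O(|\Sigma|^{-1/2})$.

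With both hypotheses verified, Proposition 3.2 of \cite{DeLellis-Muller:2005} produces a conformal diffeomorphism from $\mathbb{S}^2$ onto $\Sigma_\delta$ whose conformal factor $\tilde\beta$ satisfies $\|\tilde\beta\|_{L^\infty}^2 + \|\tilde\beta\|_{W^{1,2}(\mathbb{S}^2)}^2 = O(|\Sigma|^{-1/2})$; pulling this back through the isometry $\Sigma_\delta \cong (\Sigma, \tilde g_\Sigma)$ gives the desired $\tilde\varphi$. The only real subtlety is bookkeeping of the conformal invariance: one must be careful that the Willmore-type energy controlled by De Lellis--Müller is precisely $\int |\mathring h|^2 \, d\mu$ (equivalently $\int (H^2/4 - K_{\mathrm{ext}})\,d\mu$ up to a topological term), and that this is invariant under the conformal change $\bar g \rightsquigarrow \psi^{-2}\bar g$ relating the physical surface to its Euclidean image — but since the ambient metric $\bar g$ near infinity is itself conformal to the Euclidean metric with conformal factor $\psi$, this is exactly the classical conformal invariance of the Willmore functional and presents no difficulty beyond careful accounting of error terms. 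There is no substantive obstacle remaining once the decay estimates of the earlier lemmas are in place.
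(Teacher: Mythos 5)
Your proposal is correct and follows essentially the same route as the paper's proof: establish the $O(e^{-3r})$ comparison of the induced metric and second fundamental form with the Schwarzschild-anti-deSitter background using the pointwise bound from \eqref{eqn:secondffbound}, combine with \eqref{ringh2CY} and \eqref{decaye3r} to control $\int_\Sigma |\overline{\mathring h}|^2_{\bar g}\,d\bar\mu$, transfer to $\Sigma_\delta$ by conformal invariance, cite Corollary \ref{cor:isoperimetrydelta} for the area, and invoke Proposition 3.2 of De Lellis--M\"uller. The parenthetical identification of the Willmore-type energy is off by a factor of $2$ (in $\R^3$ one has $|\mathring h|^2 = H^2/2 - 2K$, not $H^2/4 - K$), but since you flagged it only as an equivalence up to a topological term it does not affect the argument.
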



We recall from \cite[p.~929]{Neves-Tian:2009}, cf.~\cite[Lemma 1]{Chang:1996}, the definition of the functional 
\[
S (u) = \int_{\mathbb{S}^2} |\nabla_{\mathbb{S}^2} u |^2 - 2 \int_{\mathbb{S}^2} u
\]
and its conformal invariance: 
\[
S(u) = S(v)
\]
whenever $\psi: \mathbb{S}^2 \to \mathbb{S}^2$ is a conformal diffeomorphism and  
\[
\psi^* (e^{- 2 \, u} g_{\mathbb{S}^2})= e^{- 2 \, v} g_{\mathbb{S}^2}. 
\]

A straightforward computation gives that   
\begin{align} \label{defxi}
e^{- 2 \, \xi} \, e^{2 \, \hat \beta} g_{\mathbb{S}^2} = (\tilde \varphi^{-1} \circ \hat \varphi)^* (e^{2 \, \tilde \beta} g_{\mathbb{S}^2})
\end{align}
where 
\[
\xi = w- \log (1 - e^{- 2 \, \hat r}) + \log (1 - e^{- 2 \, (r \circ \hat \varphi)}) + \log \coth \frac{r \circ \hat \varphi}{2} 
\]
and 
\[
w = (r \circ \hat \varphi) - \hat r.
\]
Note that the conclusion of Theorem \ref{thm:main1} is equivalent to the bound 
\[
w = O (1). 
\]
We establish this bound in three strides. 


\begin {lemma} \label{lem:Sw}
As $\underline r (\Sigma) \to \infty$,  
\begin{align} \label{swbounded}
S(w) = o (1).
\end{align}

\begin {proof}
In view of \eqref{defxi}, we have 
\[
S (\xi) = S( - \tilde \beta + \hat \beta \circ \psi^ {-1})
\]
where $\psi = \tilde \varphi^{-1} \circ \hat \varphi  : \mathbb{S}^2 \to \mathbb{S}^2$. Using conformal invariance and \eqref{improvedgradientdecayII}, we get 
\[
\int_{\mathbb S^2} e^{- 2 \, (r \circ \hat \varphi)} |\nabla_{\mathbb{S}^2} (r \circ \hat \varphi) |^2 = o (1),
\] 
which in turn implies 
\[
\int_{\mathbb{S}^2} |\nabla_{\mathbb{S}^2} (\xi - w)|^2 = o(1).
\]
Since 
\[
\|\xi - w\|_{L^\infty (\Sigma)} = o (1),
\]
we obtain  
\[
S (w) = S(\xi) + o (1).
\]
Using conformal invariance of energy as well as estimates \eqref{estimatehatbeta} and \eqref{estimatetildebeta}, we find  
\[
\int_{\mathbb{S}^2} |\nabla_{\mathbb{S}^2} (- \tilde \beta + \hat \beta \circ \psi^ {-1})|^2 = o (1). 
\] 
Finally, the estimate 
\[
\|  - \tilde \beta + \hat \beta \circ \psi^ {-1}\|_{L^\infty(\mathbb{S}^2)} = o (1)
\]
follows from \eqref{estimatehatbeta} and \eqref{estimatetildebeta}. Putting these estimates together, we obtain \eqref{swbounded}.
\end{proof}
\end {lemma}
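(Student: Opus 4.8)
The plan is to compare $w$ with the auxiliary quantity $\xi$ from \eqref{defxi} and then exploit the conformal invariance of the functional $S$ to push the problem from the two competing conformal parametrizations of $\Sigma$ onto the small conformal factors $\hat\beta$ and $\tilde\beta$ furnished by Corollary \ref{cor:uniformizationhat} and Proposition \ref{prop:tildeg}. This is the first of the three strides toward the bound $w = O(1)$ that is equivalent to Theorem \ref{thm:main1}.

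First I would show $S(w) = S(\xi) + o(1)$. Writing $S(\xi) - S(w) = \int_{\mathbb{S}^2} \bigl( |\nabla_{\mathbb{S}^2}(\xi - w)|^2 + \nabla_{\mathbb{S}^2}(\xi - w)\cdot\nabla_{\mathbb{S}^2}(\xi + w) \bigr) - 2 \int_{\mathbb{S}^2} (\xi - w)$, it suffices to bound $\xi - w$ in $L^\infty$ and in Dirichlet energy, the cross term then being handled by Cauchy--Schwarz once $\int|\nabla_{\mathbb{S}^2} w|^2$ is known to be bounded (which falls out of the second step). By definition $\xi - w$ consists of the three correction terms $-\log(1 - e^{-2\hat r})$, $\log(1 - e^{-2(r\circ\hat\varphi)})$, and $\log\coth\tfrac{r\circ\hat\varphi}{2}$. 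Pushing $\Sigma$ to infinity forces $\hat r \to \infty$ and $r\circ\hat\varphi \to \infty$ uniformly on $\Sigma$ --- the latter because the roundness and gradient estimates of the preceding corollaries prevent $\Sigma$ from dipping back toward the center --- so each of these terms is $o(1)$ in $L^\infty$, giving an $o(1)$ zeroth-order contribution. Their gradients are bounded multiples of $\nabla(r\circ\hat\varphi)$ weighted by factors comparable to $e^{-2(r\circ\hat\varphi)}$, so by conformal invariance of the two-dimensional Dirichlet integral (passing through $\hat\varphi$) their energy is dominated by a constant times $\int_\Sigma e^{-2r}\,|\nabla_\Sigma r|^2 = O(|\Sigma|^{-1/2})$ from \eqref{improvedgradientdecayII}. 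Hence $S(w) = S(\xi) + o(1)$.

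Next I would estimate $S(\xi)$. The identity \eqref{defxi} says that $e^{-2\xi}\, e^{2\hat\beta} g_{\mathbb{S}^2}$ is the pullback of $e^{2\tilde\beta} g_{\mathbb{S}^2}$ under $\psi := \tilde\varphi^{-1}\circ\hat\varphi$; since $\hat g_\Sigma$, $\tilde g_\Sigma$ and $\bar g|_\Sigma$ are mutually conformal on $\Sigma$, $\psi$ is a conformal diffeomorphism of the round sphere, so conformal invariance of $S$ gives $S(\xi) = S(-\tilde\beta + \hat\beta\circ\psi^{-1})$. The Dirichlet energy of $\hat\beta\circ\psi^{-1}$ equals that of $\hat\beta$, hence is $o(1)$ by \eqref{estimatehatbeta}; the Dirichlet energy of $\tilde\beta$ is $O(|\Sigma|^{-1/2})$ by \eqref{estimatetildebeta}; and both functions are $o(1)$ in $L^\infty$ (from \eqref{estimatehatbeta}, \eqref{estimatetildebeta}, and the Sobolev embedding $W^{2,p}(\mathbb{S}^2)\hookrightarrow C^0$). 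Therefore $S(-\tilde\beta + \hat\beta\circ\psi^{-1}) = o(1)$, which yields $S(\xi) = o(1)$ and, together with the $L^\infty$ bounds, the boundedness of $\int|\nabla_{\mathbb{S}^2}\xi|^2$ needed to close the first step. Combining the two steps gives \eqref{swbounded}.

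The main obstacle I anticipate is the bookkeeping: one must verify carefully that $r\circ\hat\varphi \to \infty$ uniformly so that the $e^{-2(r\circ\hat\varphi)}$ weights and the logarithmic correction terms behave as claimed, track which of the earlier estimates controls each term, and confirm that the various Dirichlet energies genuinely transform by conformal invariance through $\hat\varphi$ and through $\psi$. The conformality of $\psi$ is essentially formal, but it is the linchpin that makes the whole conformal-invariance reduction go through.
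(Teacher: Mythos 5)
Your proposal follows the paper's own proof step for step: reduce $S(w)$ to $S(\xi)$ via the $L^\infty$-small, energy-small correction terms controlled by \eqref{improvedgradientdecayII}, then transport $S(\xi)$ by conformal invariance of $S$ to $S(-\tilde\beta + \hat\beta\circ\psi^{-1})$ and estimate that via \eqref{estimatehatbeta} and \eqref{estimatetildebeta}. Two minor slips that do not affect the structure: the displayed identity for $S(\xi) - S(w)$ carries an extra $|\nabla_{\mathbb{S}^2}(\xi - w)|^2$ (the cross term $\nabla_{\mathbb{S}^2}(\xi - w)\cdot\nabla_{\mathbb{S}^2}(\xi + w)$ alone already equals $|\nabla_{\mathbb{S}^2}\xi|^2 - |\nabla_{\mathbb{S}^2} w|^2$), and the $\log\coth\tfrac{r\circ\hat\varphi}{2}$ correction contributes a weight of order $e^{-r\circ\hat\varphi}$ rather than $e^{-2(r\circ\hat\varphi)}$ -- which, after squaring, still lands on \eqref{improvedgradientdecayII}.
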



\begin {lemma} \label{lem:Deltawp}
Fix $p$ with $1 < p < 3/2$. Then
\[
\| \Delta_{\mathbb{S}^2} w \|_{L^p (\mathbb{S}^2)} = O (1).
\]
\begin{proof}
Note that 
\[
\Delta_\Sigma r = \Delta_{\bar g|_{\Sigma}} r + O (e^{- 3 \, r}).
\]
Combining \eqref{deltar} and \eqref{goodHestimateHawking}, we deduce 
\begin{align} \label{auxauxaux}
\Delta_{\bar g|_{\Sigma}} r & = (4 - 2 |\nabla_\Sigma r|^2) e^{- 2 \, r} - \frac{4 \, \pi}{|\Sigma|} + \frac{4 \, \pi}{|\Sigma|} \, (1 - \langle \nu, \nabla r \rangle) + (1 - \langle \nu, \nabla r \rangle)^2 \nonumber \\
& \qquad + O (e^{- 3 \, r}) + O (e^{- 3 \, \hat r}). \nonumber \\
& = O (e^{- 2 \, r}) + O (e^{- 2 \, \hat r}) + (1 - \langle \nu, \nabla r \rangle)^2.
\end{align}
From the conformal invariance of the Laplace-Beltrami operator on surfaces, we obtain 
\begin{align*}
\left( \Delta_{\bar g |_\Sigma} r \right) \circ \hat \varphi = \Delta_{\hat \varphi^*  \bar g|_\Sigma} w = \Delta_{ ( \sinh \hat r )^2 \, e^{2 \,  \hat \beta} \, g_{\mathbb{S}^2}} w = (\sinh \hat r)^{-2} \, e^{- 2 \, \hat \beta} \,  \Delta_{\mathbb{S}^2} w.
\end{align*}
From this, we verify the asserted bound term by term. First, note that 
\[
e^{2 \, \hat \beta} = 1 + o (1)
\]
by \eqref{estimatehatbeta} and Sobolev embedding. Using \eqref{decaye3r} and H\"older's inequality, we obtain  
\[
\int_{\mathbb{S}^2} e ^{- 2 \, p \, (r \circ \hat \varphi)} \,  e ^{2  \, p \, \hat r } = O ( e^{2 \, p \, \hat r} \, e^{- 2 \, \hat r}) \int_\Sigma e^{- 2 \, p \, r}  = O (1). 
\]
This bounds the contribution to $\| \Delta_{\mathbb{S}^2} w \|_{L^p (\mathbb{S}^2)}$ from the first term in \eqref{auxauxaux}. To estimate the contribution from the third term, we apply \eqref{improvedgradientdecay} to obtain 
\begin {align*}
\int_\Sigma (1 - \langle \nu, \nabla r \rangle)^{2 \, p} \leq \int_\Sigma (1 - \langle \nu, \nabla r \rangle)^2 = O ( e^{- \hat r} ).
\end {align*}
Here we also use that $\langle \nu, \nabla r \rangle > 0$ by Lemma \ref{lem:tangentialgradiento1}. The bounds for the other terms follow from these. 
\end {proof}
\end {lemma}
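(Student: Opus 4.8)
The plan is to transfer the equation for $r$ to the round sphere via the conformal parametrization $\hat\varphi$ of Corollary~\ref{cor:uniformizationhat}, reduce $\Delta_{\mathbb{S}^2} w$ to a quantity on $\Sigma$ that we already control pointwise, and then estimate it in $L^p$ term by term. Since $\hat r$ is constant, $\Delta_{\mathbb{S}^2} w = \Delta_{\mathbb{S}^2}(r\circ\hat\varphi)$, and because $\hat\varphi^*(\bar g|_\Sigma) = (\sinh\hat r)^2 e^{2\hat\beta}\, g_{\mathbb{S}^2}$, the naturality of the Laplace--Beltrami operator under $\hat\varphi$ together with its conformal covariance on surfaces gives
\[
\Delta_{\mathbb{S}^2} w = (\sinh\hat r)^2\, e^{2\hat\beta}\, \big(\Delta_{\bar g|_\Sigma} r\big)\circ\hat\varphi .
\]
By \eqref{estimatehatbeta} and the Sobolev embedding $W^{2,p}(\mathbb{S}^2)\hookrightarrow C^0(\mathbb{S}^2)$, valid for $p>1$, one has $e^{2\hat\beta} = 1+o(1)$ uniformly, and the same factor compares $d\mu_{\mathbb{S}^2}$ with $(\sinh\hat r)^{-2}$ times the pullback of $\bar\mu$ under $\hat\varphi$. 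Changing variables, it therefore suffices to prove
\[
(\sinh\hat r)^{2p-2}\int_\Sigma |\Delta_{\bar g|_\Sigma} r|^p\, d\bar\mu = O(1).
\]

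The next step is the pointwise expansion of $\Delta_{\bar g|_\Sigma} r$. Passing from the induced metric to the background metric changes the Laplacian only by $O(e^{-3r})$, and inserting the constant value of $H$ and the sharp relation $\tfrac{4\pi}{|\Sigma|} = (H-2)+O(|\Sigma|^{-3/2})$ from \eqref{goodHestimateHawking} into the identity \eqref{deltar} for $\Delta_\Sigma r$, then collecting lower-order terms, yields \eqref{auxauxaux}:
\[
\Delta_{\bar g|_\Sigma} r = O(e^{-2r}) + O(e^{-2\hat r}) + (1-\langle\nu,\nabla r\rangle)^2 .
\]
I would then bound the three contributions to $\int_\Sigma |\Delta_{\bar g|_\Sigma} r|^p\, d\bar\mu$. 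The constant term $O(e^{-2\hat r})$ contributes at most $O(e^{-2p\hat r}|\Sigma|) = O((\sinh\hat r)^{2-2p})$, using $|\Sigma| = 4\pi(\sinh\hat r)^2$. For the term $(1-\langle\nu,\nabla r\rangle)^2$, I would use $\langle\nu,\nabla r\rangle > 0$ from Lemma~\ref{lem:tangentialgradiento1} to dominate $(1-\langle\nu,\nabla r\rangle)^{2p}\le (1-\langle\nu,\nabla r\rangle)^2$ and then apply \eqref{improvedgradientdecay}, which gives a contribution of order $O(|\Sigma|^{-1/2}) = O((\sinh\hat r)^{-1})$.

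The remaining term, $\int_\Sigma e^{-2pr}\, d\bar\mu$, is the delicate one and the main obstacle: since $2p<3$, the decay in \eqref{decaye3r} does not dominate $e^{-2pr}$ directly, so I would interpolate. By H\"older's inequality with exponents $\tfrac{3}{2p}$ and $\tfrac{3}{3-2p}$,
\[
\int_\Sigma e^{-2pr}\, d\bar\mu \le \Big(\int_\Sigma e^{-3r}\, d\bar\mu\Big)^{\frac{2p}{3}} |\Sigma|^{\,1-\frac{2p}{3}} = O\big(|\Sigma|^{-1/2}\big)^{\frac{2p}{3}}\, O\big(|\Sigma|^{\,1-\frac{2p}{3}}\big) = O\big((\sinh\hat r)^{2-2p}\big).
\]
Multiplying the three bounds by $(\sinh\hat r)^{2p-2}$ yields $O(1)$ from the first two and $o(1)$ from the gradient term, and hence $\|\Delta_{\mathbb{S}^2} w\|_{L^p(\mathbb{S}^2)} = O(1)$. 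It is precisely this interpolation that fixes the admissible range $1<p<3/2$ and explains why the conclusion is $O(1)$ rather than $o(1)$.
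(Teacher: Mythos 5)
Your proposal is correct and follows essentially the same route as the paper: transfer $\Delta_{\bar g|_\Sigma} r$ to $\mathbb{S}^2$ via $\hat\varphi$ and conformal covariance, expand it via \eqref{deltar} combined with \eqref{goodHestimateHawking} to obtain \eqref{auxauxaux}, and estimate each of the three terms after the change of variables. The only difference is presentational: you spell out explicitly the H\"older interpolation between \eqref{decaye3r} and the area bound for the $e^{-2pr}$ contribution, which the paper compresses into the phrase ``Using \eqref{decaye3r} and H\"older's inequality,'' and you make explicit the scaling $(\sinh\hat r)^{2p-2}$ picked up from the conformal factor and the change of volume measure; both clarifications are accurate and illuminating, in particular your remark that the interpolation is where the restriction $p<3/2$ enters.
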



\begin {proposition} \label{prop:w2pw}
Let $p$ be such that $1 < p < 3/2$. Then  
\begin{align} \label{eqn:w2pw}
\|w\|_{W^{2, p} (\mathbb{S}^2)} = O (1).
\end{align}
In particular, $w$ is bounded.
\begin {proof}
By Lemma \ref{lem:Sw} and Lemma \ref{lem:Deltawp}, we have that 
\[
|S(w)| = O(1) \qquad \text{ and } \qquad \|\Delta_{\mathbb{S}^2} w\|_{L^p(\mathbb{S}^2)} = O(1).
\]
Let 
\[
f = \Delta_{\mathbb{S}^2} w \qquad \text{ and } \qquad v = w - \fint_{\mathbb{S}^2}w.
\]
Testing the equation 
\[
\Delta_{\mathbb{S}^2} v = f
\]
with $v$ and using Cauchy-Schwarz and the Poincar\'e inequality, we obtain 
\[
\|\nabla_{\mathbb{S}^2} v\|^2_{L^2 (\mathbb{S}^2)} \leq \| f \|_{L^p (\mathbb{S}^2)} \, \| v\|_{L^q (\mathbb{S}^2)} \leq O (1) \|\nabla_{\mathbb{S}^2} v\|_{L^2 (\mathbb{S}^2)}.
\]
It follows that  
\[
 \|\nabla w\|_{L^2(\mathbb{S}^2)} = O (1). 
\]
In conjunction with $S(w) = O(1)$, we find 
\[
\fint_{\mathbb{S}^2} w = O (1). 
\]
Putting these estimates together, we obtain 
\[
\|w\|_{L^2(\mathbb{S}^2)} = O (1)
\]
from the Poincar\'e inequality. Standard elliptic theory now gives \eqref{eqn:w2pw}. 
\end {proof}
\end {proposition}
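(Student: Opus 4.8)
The plan is to combine the two preceding lemmas in the right order. Lemma~\ref{lem:Deltawp} gives $\|\Delta_{\mathbb{S}^2} w\|_{L^p(\mathbb{S}^2)} = O(1)$, which controls $w$ only up to an additive constant, while Lemma~\ref{lem:Sw} supplies exactly the extra scalar information needed to pin that constant down. So I would first separate the mean value: put $v = w - \fint_{\mathbb{S}^2} w$ and $f = \Delta_{\mathbb{S}^2} w = \Delta_{\mathbb{S}^2} v$, so that $\|f\|_{L^p(\mathbb{S}^2)} = O(1)$ and $\fint_{\mathbb{S}^2} v = 0$.

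Next I would run the basic energy estimate for $v$. Testing $\Delta_{\mathbb{S}^2} v = f$ against $v$ and integrating by parts gives $\|\nabla_{\mathbb{S}^2} v\|_{L^2(\mathbb{S}^2)}^2 = -\int_{\mathbb{S}^2} f v \le \|f\|_{L^p(\mathbb{S}^2)} \|v\|_{L^q(\mathbb{S}^2)}$ with $q = p/(p-1)$. Since $1 < p < 3/2$ we have $3 < q < \infty$, and on the fixed round sphere the Sobolev embedding $W^{1,2}(\mathbb{S}^2) \hookrightarrow L^q(\mathbb{S}^2)$ together with the Poincar\'e inequality for mean-zero functions yields $\|v\|_{L^q(\mathbb{S}^2)} \le C \|\nabla_{\mathbb{S}^2} v\|_{L^2(\mathbb{S}^2)}$, with $C$ depending only on $\mathbb{S}^2$ and $q$. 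Absorbing one factor of $\|\nabla_{\mathbb{S}^2} v\|_{L^2(\mathbb{S}^2)}$ leaves $\|\nabla_{\mathbb{S}^2} w\|_{L^2(\mathbb{S}^2)} = \|\nabla_{\mathbb{S}^2} v\|_{L^2(\mathbb{S}^2)} = O(1)$.

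Then I would recover the average and finish by elliptic regularity. Since $S(w) = \|\nabla_{\mathbb{S}^2} w\|_{L^2(\mathbb{S}^2)}^2 - 2\int_{\mathbb{S}^2} w$, Lemma~\ref{lem:Sw} together with the gradient bound forces $\fint_{\mathbb{S}^2} w = O(1)$; combined with the Poincar\'e estimate $\|v\|_{L^2(\mathbb{S}^2)} \le C\|\nabla_{\mathbb{S}^2} v\|_{L^2(\mathbb{S}^2)}$ this gives $\|w\|_{L^2(\mathbb{S}^2)} = O(1)$, hence $\|w\|_{L^p(\mathbb{S}^2)} = O(1)$ because $p < 2$. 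The Calder\'on--Zygmund estimate $\|w\|_{W^{2,p}(\mathbb{S}^2)} \le C\big(\|\Delta_{\mathbb{S}^2} w\|_{L^p(\mathbb{S}^2)} + \|w\|_{L^p(\mathbb{S}^2)}\big)$ on $(\mathbb{S}^2, g_{\mathbb{S}^2})$ then yields \eqref{eqn:w2pw}, and since $2p > 2$ the embedding $W^{2,p}(\mathbb{S}^2) \hookrightarrow C^0(\mathbb{S}^2)$ gives the asserted boundedness of $w$.

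There is no deep obstacle here — the substance is entirely in Lemmas~\ref{lem:Sw} and~\ref{lem:Deltawp} — but two points require care. First, the two inputs must be used in this sequence: the $L^p$ bound on $\Delta_{\mathbb{S}^2} w$ is blind to constants, and $S(w) = o(1)$ alone does not control $\|\nabla_{\mathbb{S}^2} w\|_{L^2}$ without already knowing $\Delta_{\mathbb{S}^2} w$. Second, one should check that every constant invoked (Sobolev, Poincar\'e, Calder\'on--Zygmund, and the $C^0$ embedding) depends only on the fixed background sphere, so that all the $O(1)$ bounds are genuinely uniform as $\underline r(\Sigma) \to \infty$.
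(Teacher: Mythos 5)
Your argument is correct and follows the paper's proof essentially step for step: the same decomposition $v = w - \fint w$, the same energy estimate combined with H\"older/Sobolev/Poincar\'e to bound $\|\nabla w\|_{L^2}$, the same use of $S(w)$ to recover the mean, and elliptic regularity to conclude. You merely fill in a few details the paper leaves implicit (the exponent $q = p/(p-1)$, the Sobolev embedding $W^{1,2}(\mathbb{S}^2)\hookrightarrow L^q(\mathbb{S}^2)$ behind the inequality that the paper attributes to ``Cauchy--Schwarz and Poincar\'e,'' and the $W^{2,p}\hookrightarrow C^0$ embedding at the end).
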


Since $w$ is bounded, the pinching condition \eqref{pinching} is satisfied, and Theorem \ref{thm:main1}  follows from the uniqueness results in \cite{Neves-Tian:2009}.
 

\section {Proof of Theorem \ref{thm:main2}}

In this section, we assume that $(M, g)$ is a complete Riemannian $3$-manifold asymptotic to Schwarzschild-anti-deSitter with mass $m > 0$, that $R \geq -6$, and that $\partial M$ is connected and the only closed $H = 2$ surface in $(M, g)$. \\

We need a large amount of area to bound a large amount of volume in $(M, g)$. This follows by comparison with hyperbolic space in the chart at infinity together with cut-and-paste arguments. Comparison with hyperbolic space also gives that  
\[
\lim_{A \to \infty} V_g(A) = \infty.
\]
We may thus use either area or volume to specify \textit{large} solutions to the isoperimetric problem. \\

We recall from Lemma \ref{lem:onelargecomponent} that every \textit{large enough} isoperimetric region has a \textit{unique large} component. The residue is a small collar about $\partial M$ by Lemma \ref{lem:smallcomponents}. \\

The crucial ingredient for the proof of Theorem \ref{thm:main2} is the characterization of \textit{isoperimetric spheres} as leaves of the canonical foliation given in the following proposition. 

\begin {proposition} \label{prop:largeisosphere}
There is $A_0 > 1$ with the following property. Let $\Omega$ be the unique large component of an isoperimetric region for area $A \geq A_0$. Its outer boundary $\Sigma = (\partial \Omega) \setminus (\partial M)$ is connected. If $\Sigma$ is a sphere, then it is a leaf of the canonical foliation and $\Omega$ coincides with the original isoperimetric region.  
\begin {proof} 
The connectedness of $\Sigma$ follows from the discussion in Appendix \ref{sec:isoperimetricprofile}.   
By Lemma \ref{lem:nodrift} and Lemma \ref{lem:mHbounds}, there is $r_0 > 1$ such that $\Omega \cap B_{r_0} \neq \emptyset$ and $m_H (\Sigma) \leq 4 \, m$ provided that $V > 0$ is sufficiently large. Taking $r_0 > 1$ larger, if necessary, Theorem \ref{thm:main1} shows that $\Sigma$ is a leaf of the canonical foliation if  $B_{r_0} \subset \Omega$. Theorem 4.3 from \cite{Ji-Shi-Zhu:2015} rules out the scenario $\Sigma \cap B_{r_0} \neq \emptyset$. Finally, Lemma \ref{lem:smallcomponents} shows that $\Sigma$ and $\partial M$ bound the original isoperimetric region. 
\end {proof}
\end {proposition}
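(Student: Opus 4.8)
The plan is to prove Proposition \ref{prop:largeisosphere} by assembling the auxiliary lemmas from the appendices with Theorem \ref{thm:main1}. First I would record that the connectedness of $\Sigma = (\partial \Omega) \setminus (\partial M)$ is not something one proves from scratch here: it is a consequence of the analysis of the isoperimetric profile in Appendix \ref{sec:isoperimetricprofile}, where one shows that a disconnected outer boundary could be improved by a comparison surface, contradicting the isoperimetric property. So the first step is simply to invoke that discussion.

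The substantive content is verifying the hypotheses of Theorem \ref{thm:main1} for $\Sigma$. The plan is: (i) Show that $\Omega$ reaches deep into $M$, i.e.\ $\Omega \cap B_{r_0} \neq \emptyset$ for any fixed $r_0$ once $V$ (equivalently $A$) is large enough --- this is Lemma \ref{lem:nodrift}, the ``no drift'' statement that a large isoperimetric region cannot be pushed entirely out to infinity, proven by comparison with hyperbolic space and the fact that $V_g(A) \to \infty$. (ii) Show that $m_H(\Sigma) \leq 4\,m$ for $V$ large --- this is the Hawking mass bound of Lemma \ref{lem:mHbounds}, which rests on the scalar curvature assumption $R \geq -6$ and the work of \cite{Chodosh:2016}. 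With (i) and (ii) in hand, and assuming $\Sigma$ is a sphere, Theorem \ref{thm:main1} applies \emph{provided} $B_{r_0} \subset \Omega$, and concludes that $\Sigma$ is a leaf of the canonical foliation. Here one enlarges $r_0$ if necessary so that the $r_0$ of Theorem \ref{thm:main1} (which depends on the now-fixed bound $\Lambda = 4\,m$) is accommodated.

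The gap between ``$\Omega \cap B_{r_0} \neq \emptyset$'' and ``$B_{r_0} \subset \Omega$'' is the key remaining point: a priori $\Sigma$ could itself intersect $B_{r_0}$, so that $\Omega$ contains part but not all of $B_{r_0}$. The plan is to rule this out by invoking Theorem 4.3 of \cite{Ji-Shi-Zhu:2015}, which (in this asymptotically hyperbolic setting, using $R \geq -6$ and that $\partial M$ is the outermost $H=2$ surface) forbids an isoperimetric boundary from cutting through the compact core: any such surface could be replaced by a surface avoiding $B_{r_0}$ with no more area and no less enclosed volume, contradicting minimality --- unless it coincides with $\partial M$, which is excluded since $\Sigma \neq \partial M$. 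Hence $\Sigma \cap B_{r_0} = \emptyset$, and combined with $\Omega \cap B_{r_0} \neq \emptyset$ and connectedness of $B_{r_0}$, this forces $B_{r_0} \subset \Omega$. I expect this step --- correctly citing and applying the \cite{Ji-Shi-Zhu:2015} exclusion result in the present normalization --- to be the main obstacle, since it is where the global hypotheses on $(M,g)$ (connectedness of $\partial M$, its being the unique closed $H=2$ surface, and $R \geq -6$) are genuinely used.

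Finally, having established that $\Sigma$ is a leaf of the canonical foliation, it remains to identify $\Omega$ with the original isoperimetric region. By Lemma \ref{lem:onelargecomponent} and Lemma \ref{lem:smallcomponents}, the original isoperimetric region for area $A$ is $\Omega$ together with at most a small collar neighborhood of $\partial M$; but Lemma \ref{lem:smallcomponents} in fact shows the residue is empty, so $\Omega$ bounded by $\partial M$ and the leaf $\Sigma$ \emph{is} the original isoperimetric region. This closes the proof. The only care needed is bookkeeping on the order of quantifiers: one fixes $r_0$ large enough for all of Lemma \ref{lem:nodrift}, Lemma \ref{lem:mHbounds}, Theorem \ref{thm:main1} (with $\Lambda = 4m$), and Theorem 4.3 of \cite{Ji-Shi-Zhu:2015} simultaneously, and then chooses $A_0$ (equivalently $V_0$) large enough that all the ``$V$ sufficiently large'' hypotheses hold.
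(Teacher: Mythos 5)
Your proposal matches the paper's proof essentially line for line: connectedness from Appendix \ref{sec:isoperimetricprofile}, the two hypotheses of Theorem \ref{thm:main1} via Lemma \ref{lem:nodrift} and Lemma \ref{lem:mHbounds}, Theorem 4.3 of \cite{Ji-Shi-Zhu:2015} to upgrade $\Omega \cap B_{r_0} \neq \emptyset$ to $B_{r_0} \subset \Omega$, and Lemma \ref{lem:smallcomponents} to rule out residual small components. You correctly identify the $\Omega \cap B_{r_0} \neq \emptyset$ vs.\ $B_{r_0} \subset \Omega$ gap as the crux where the global hypotheses enter, which is exactly what the paper addresses with the \cite{Ji-Shi-Zhu:2015} exclusion result.
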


We can now complete the proof of Theorem \ref{thm:main2} following the strategy of Section 9 in \cite{Chodosh:2016}, which in turn develops an idea of H.~Bray \cite{Bray:1997}. We sketch the full argument from \cite{Chodosh:2016}, where $(M, g)$ is assumed to be exactly Schwarzschild-anti-deSitter outside of a compact set, below, including the minor but necessary adaptations to our more general setting. Since the strategy is technical, we begin with an outline. \\ 

Let  $\{\Sigma_A\}_{A >  A_0}$ be the canonical foliation of $(M, g)$. We use $\Omega_A$ to denote the compact region bounded by $\Sigma_A$ together with $\partial M$. 

The derivative of 
\[
A \mapsto \vol_g( \Omega_A)
\]
is  the inverse of the mean curvature $H_A$ of $\Sigma_A$. We obtain an explicit estimate for $H_A$ from the expansion  
\begin{align} \label{aux:Hawkingoutline}
m_H(\Sigma_A) = \frac{A^{\frac{1}{2}}}{ (16 \, \pi)^{\frac{3}{2}}} \left( 16 \, \pi + 4 \, A - H_A^2 \, A \right) = m + O (A^{-1})
\end{align}
of the Hawking mass along the canonical foliation discussed in Lemma \ref{lem:Hacf}. 
Assume now that $I \subset (A_0, \infty)$ is an open interval such that, for every $A \in I$, there is an isoperimetric region the boundary of whose unique large component has non-zero genus. The derivative of the isoperimetric profile on such an interval $I$ is appreciably smaller than anticipated by \eqref{aux:Hawkingoutline}. Integrating up and comparing with the volume enclosed by the centered coordinate sphere $S_{r(A)}$ of area $A$, 
\begin{align} \label{eqn:isoballs}
\vol_g(B_{r(A)}) = \frac{1}{2} \, A - \pi \, \log A + O (1), 
\end{align}
cf.~Lemma \ref{lem:isoballs}, it follows that the interval $I$ is bounded. From this, we conclude that a \textit{maximally extended} such interval $I$ \textit{far out} has the form $(A_1, A_2)$ where $\Omega_{A_1}$, $\Omega_{A_2}$ are isoperimetric. We can rule out the existence of such intervals by studying the derivative of the isoperimetric profile at the endpoins $A_1, A_2$. \\

We now proceed to make this argument precise. \\

It follows from standard compactness properties of isoperimetric regions that   
\[
\mathcal{I} = \{ A > A_0 : \Omega_A \text{ is \textit{not} isoperimetric}\}
\]
is open. Let $A \in \mathcal{I}$.  By Proposition \ref{prop:largeisosphere}, the unique large component of an isoperimetric region for area $A$ has boundary of non-zero genus. Using this input, we estimate the derivative of the isoperimetric profile on connected components of $\mathcal{I}$. 

\begin {lemma} [Cf.~Lemma 9.1 and Proposition 9.3  in \cite{Chodosh:2016}] \label{lem:torusintervals}
There is $A_0 > 1$ with the following property. Let $(A_1, A_2) \subset [A_0, \infty)$ be such that, for every $A \in (A_1, A_2)$, there exists an isoperimetric region for area $A$ and the boundary of whose unique large component has non-zero genus. Then 
\begin{align} \label{eqn:torusintervals}
(V_g (A_2)_{-}')^{-2}  - 23 \, \pi \, A_2^{-1} \leq (V_g(A_1)_+') ^{-2} - 23 \, \pi \, A_1^{-1}. 
\end{align}
\begin {proof}
Assume first that the isoperimetric profile is smooth on $[A_1, A_2]$. Fix $A \in (A_1, A_2)$ and let $\Sigma$ be the boundary of the unique large component of an isoperimetric region whose boundary has area $A$. Following the proof of Lemma 9.1 in \cite{Chodosh:2016}, using Lemma \ref{lem:mHbounds} instead of Proposition 8.3 in \cite{Chodosh:2016}, we obtain   
\begin{align} \label{auxisoderivative}
2 \, V_g'' (A) \, V_g'(A)^{-3} \, (A - c)^2 \geq 24 \, \pi + \int_\Sigma (R + 6 + |\mathring h|^2) - 6 \, (A - c)^{- \frac{1}{2}} \, (16 \, \pi)^{\frac{3}{2}} \, m
\end{align}
where 
\[
c = A -  \area_g (\Sigma) = O (1).
\]
Recall from the discussion in Appendix \ref{sec:isoperimetricprofile} that the isoperimetric profile is increasing. Dropping the non-negative second term on the right-hand side and absorbing the third term into the first, we arrive at  
\[
2 \, V_g'' (A) \, V_g'(A)^{-3} \, A^2 \geq 23 \, \pi.
\]
Equivalently, the function 
\[
A \mapsto V_g' (A)^{-2} - 23 \, \pi \, A^{-1}
\]
is non-increasing on $(A_1, A_2)$. This gives \eqref{eqn:torusintervals} in the special case when the isoperimetric profile is smooth. In the general case, we can argue using weak derivatives exactly as in the proof of Proposition 6.3 in \cite{Chodosh:2016} to arrive at the same conclusion.
\end {proof}
\end {lemma}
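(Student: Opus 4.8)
The plan is to establish the monotonicity of the function $A \mapsto V_g'(A)^{-2} - 23\,\pi\,A^{-1}$ on $(A_1, A_2)$ and then integrate. First I would recall the second-variation/Simon-type differential inequality satisfied by the isoperimetric profile $V_g$, which under our hypotheses takes the form of the stability inequality for the unique large component's boundary $\Sigma$. The starting point is that, since for $A \in (A_1, A_2)$ the boundary of the unique large component has non-zero genus, the Gauss-Bonnet contribution $\int_\Sigma K$ is at most $0$ rather than $4\pi$, which is exactly the extra room that produces $24\pi$ in place of a smaller constant on the right-hand side of \eqref{auxisoderivative}. I would invoke the computation from Lemma 9.1 of \cite{Chodosh:2016} verbatim, substituting our Lemma \ref{lem:mHbounds} (which controls the Hawking mass, hence the term $(16\pi)^{3/2} m (A-c)^{-1/2}$) for Proposition 8.3 there; this substitution is legitimate because the only property of the ambient geometry used is the Hawking mass bound along isoperimetric surfaces, which holds here by Lemma \ref{lem:mHbounds}.

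The key steps, in order, are: (i) assume first that $V_g$ is smooth on $[A_1, A_2]$ and derive \eqref{auxisoderivative}, where $c = A - \area_g(\Sigma) = O(1)$ by Lemma \ref{lem:smallcomponents} (the residual collar about $\partial M$ has bounded area); (ii) use $R + 6 \geq 0$ and $|\mathring h|^2 \geq 0$ to drop the integral term, and absorb the last term $-6\,(A-c)^{-1/2}(16\pi)^{3/2} m$ into the constant: since $c = O(1)$ and $A$ is large, $(A-c)^{-1/2} = O(A^{-1/2})$, so for $A_0$ large enough $24\pi - 6(A-c)^{-1/2}(16\pi)^{3/2} m \geq 23\pi$, and replacing $(A-c)^2$ by $A^2$ (again $c = O(1)$, $A$ large, both positive) gives $2 V_g''(A) V_g'(A)^{-3} A^2 \geq 23\pi$; (iii) observe that this inequality is equivalent to $\frac{d}{dA}\left( V_g'(A)^{-2} - 23\pi A^{-1} \right) \leq 0$, since $\frac{d}{dA} V_g'(A)^{-2} = -2 V_g''(A) V_g'(A)^{-3}$ and $\frac{d}{dA}(-23\pi A^{-1}) = 23\pi A^{-2}$; (iv) integrate this monotonicity from $A_1$ to $A_2$, using the one-sided derivatives $V_g'(A_1)_+$ and $V_g'(A_2)_-$ at the endpoints — which are the relevant quantities because $V_g$ is only Lipschitz and semiconcave in general — to conclude \eqref{eqn:torusintervals}.

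For the general case, where $V_g$ need not be smooth on the closed interval, I would replace the classical derivatives by weak/distributional derivatives: $V_g$ is locally Lipschitz and, by the standard comparison with foliations by constant-mean-curvature surfaces, satisfies the differential inequality in the barrier (or viscosity) sense, and the function $A \mapsto V_g'(A)^{-2} - 23\pi A^{-1}$ is then non-increasing in the sense of distributions. This is exactly the argument in the proof of Proposition 6.3 of \cite{Chodosh:2016}, which I would cite rather than reproduce; the semiconcavity of $V_g$ and the lower semicontinuity of the left and upper semicontinuity of the right one-sided derivatives ensure that the pointwise inequality at the endpoints survives the limiting procedure.

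The main obstacle is step (i): verifying that the derivation of \eqref{auxisoderivative} from \cite{Chodosh:2016} goes through with Lemma \ref{lem:mHbounds} in place of the exactly-Schwarzschild-anti-deSitter input used there. Concretely, one must check that the Hawking mass bound $m_H(\Sigma) \leq 4m$ (or, more precisely, the asymptotic form needed to control the $m$-term in \eqref{auxisoderivative}) is all that enters the estimate, and that no finer asymptotic information about $g$ near infinity is secretly used. I expect this to be routine but it is where the care lies: the second variation of area combined with the Gauss equation produces $\int_\Sigma(R + 6 + |\mathring h|^2)$ plus a term governed by $H^2 |\Sigma| - 4|\Sigma| - 16\pi$, which is exactly $(16\pi)^{3/2} A^{-1/2} m_H(\Sigma)$; bounding $m_H(\Sigma)$ by a constant multiple of $m$ then yields the stated inequality. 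Everything downstream — dropping non-negative terms, absorbing the error, and the distributional monotonicity argument — is then formally identical to \cite{Chodosh:2016}.
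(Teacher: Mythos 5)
Your proposal follows the paper's proof essentially step for step: derive \eqref{auxisoderivative} via Lemma 9.1 of \cite{Chodosh:2016} with Lemma \ref{lem:mHbounds} in place of Proposition 8.3, drop the non-negative $\int_\Sigma (R+6+|\mathring h|^2)$ term, absorb the Hawking-mass error and the $c=O(1)$ discrepancy into the constant for $A_0$ large, recognize the resulting inequality as monotonicity of $A \mapsto V_g'(A)^{-2} - 23\pi A^{-1}$, and handle the non-smooth case via the weak-derivative argument of Proposition 6.3 in \cite{Chodosh:2016}. Your additional commentary (tracing the $8\pi$ gain in the constant to Gauss--Bonnet for non-zero genus, and $c = O(1)$ to Lemma \ref{lem:smallcomponents}) is correct and fills in details the paper leaves implicit, but the route is the same.
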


\begin {lemma} [Cf.~Proposition 3.3 in \cite{Chodosh:2016}] \label{torusestimateisoperimetric}
For every $A_0 > 1$ there is $A \geq A_0$ with the following property. There does not exist an isoperimetric region for area $A$ such that the boundary of its unique large component has non-zero genus. 
\begin {proof}
Assume that the conclusion fails. It follows from Lemma \ref{lem:torusintervals} that there is $A_0 > 1$ large such that  
\begin{align} \label{auxxx}
4 \leq ( V_g(A)_+')^{-2} - 23 \, \pi \, A^{-1}
\end{align}
for \textit{all} $A \geq A_0$. Indeed, we have  \eqref{auxxx}  for all $A_1, A_2$ with $A_0 < A_1 < A_2$. We now take $A_2 \to \infty$ in \eqref{eqn:torusintervals} and use \eqref{eqn:convexity} and \eqref{eqn:largeisoH} to conclude \eqref{auxxx}. Using that $A \mapsto V_g(A)$ is strictly increasing and absolutely continuous, we conclude that 
\[
V_g(A) \leq \frac{1}{2} \, A  - \frac{23}{16} \, \pi \, \log A + O (1)
\]
for all $A \geq A_1$. This estimate contradicts \eqref{eqn:isoballs}. 
\end {proof}
\end {lemma}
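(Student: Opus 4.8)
The plan is to argue by contradiction and turn the hypothetical existence of arbitrarily large genus-one isoperimetric data into an impossible growth estimate for the isoperimetric profile. Suppose the conclusion fails: there is some $A_0 > 1$ such that for \emph{every} $A \geq A_0$ there is an isoperimetric region for area $A$ whose unique large component has boundary of non-zero genus. Then the interval $(A_0, \infty)$ itself is an admissible interval in the sense of Lemma \ref{lem:torusintervals}, and more generally so is every subinterval $(A_1, A_2)$ with $A_0 < A_1 < A_2$. Applying \eqref{eqn:torusintervals} on $(A_1, A_2)$ and letting $A_2 \to \infty$, I would use the known asymptotics of the isoperimetric profile far out --- the convexity/differentiability statement \eqref{eqn:convexity} together with the estimate \eqref{eqn:largeisoH} for $H_A$ along the canonical foliation, which gives $V_g'(A) \to 1/2$ and hence $(V_g'(A))^{-2} \to 4$ and $23\pi A^{-1} \to 0$ --- to see that the right side of \eqref{eqn:torusintervals} tends to $4$. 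This yields $4 \leq (V_g(A)_+')^{-2} - 23\pi A^{-1}$ for all $A \geq A_0$, which is exactly \eqref{auxxx}.

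Next I would integrate this differential inequality. Rearranging \eqref{auxxx} gives $(V_g(A)_+')^{-2} \geq 4 + 23\pi A^{-1}$, hence $V_g(A)_+' \leq (4 + 23\pi A^{-1})^{-1/2}$. Expanding, $(4 + 23\pi A^{-1})^{-1/2} = \tfrac12 (1 + \tfrac{23\pi}{4A})^{-1/2} \leq \tfrac12 - \tfrac{23\pi}{16 A} + O(A^{-2})$ for $A$ large. Since $A \mapsto V_g(A)$ is strictly increasing and absolutely continuous (as recalled from the discussion of the isoperimetric profile in Appendix \ref{sec:isoperimetricprofile}), I can integrate this pointwise bound on the a.e.-defined derivative from $A_0$ up to $A$ to obtain
\[
V_g(A) \leq \frac{1}{2}\, A - \frac{23}{16}\, \pi \, \log A + O(1)
\]
for all $A \geq A_0$. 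Comparing this with \eqref{eqn:isoballs}, namely $\vol_g(B_{r(A)}) = \tfrac12 A - \pi \log A + O(1)$, and noting that the centered coordinate sphere $S_{r(A)}$ of area $A$ is an admissible competitor so $V_g(A) \geq \vol_g(B_{r(A)}) - O(1)$ (after the usual small cut-and-paste correction near $\partial M$), we get $\tfrac12 A - \pi \log A + O(1) \leq \tfrac12 A - \tfrac{23}{16}\pi \log A + O(1)$, i.e. $(\tfrac{23}{16} - 1)\pi \log A \leq O(1)$, which is false for $A$ large. This contradiction proves the lemma.

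The step I expect to be the main obstacle is the passage to the limit $A_2 \to \infty$ in \eqref{eqn:torusintervals}: one must know that $(V_g(A_2)_-')^{-2} - 23\pi A_2^{-1} \to 4$, which requires the sharp asymptotics $V_g'(A) \to 1/2$ for the isoperimetric profile far out. This is precisely where \eqref{eqn:convexity} and \eqref{eqn:largeisoH} enter: $V_g'(A_2)$ is controlled by the mean curvature $H_{A_2}$ of the canonical leaf $\Sigma_{A_2}$ of area $A_2$ (by first variation, whenever $\Sigma_{A_2}$ is itself isoperimetric, or via the convexity comparison in general), and \eqref{aux:Hawkingoutline} forces $H_{A_2} \to 2$, hence $V_g'(A_2) = H_{A_2}^{-1} \to 1/2$. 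A secondary subtlety is that \eqref{auxxx} is an inequality for the one-sided \emph{weak} derivative $V_g(A)_+'$ rather than a classical derivative; here I would invoke the same weak-derivative formalism used in Proposition 6.3 of \cite{Chodosh:2016} to justify integrating the bound, exactly as indicated in the proof of Lemma \ref{lem:torusintervals}. Everything else is routine: the asymptotic expansions of the relevant integrals and the elementary comparison with \eqref{eqn:isoballs}.
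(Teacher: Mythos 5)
Your proposal is correct and reproduces the paper's argument: assume for contradiction that non-genus-zero boundaries persist for all large areas, pass to the limit $A_2 \to \infty$ in \eqref{eqn:torusintervals} using \eqref{eqn:convexity} and \eqref{eqn:largeisoH} to obtain \eqref{auxxx}, integrate the resulting bound on $V_g(A)_+'$ to get $V_g(A) \leq \tfrac{1}{2}\,A - \tfrac{23}{16}\,\pi\,\log A + O(1)$, and contradict \eqref{eqn:isoballs}. Two cosmetic slips worth fixing: it is the left-hand ($A_2$) side of \eqref{eqn:torusintervals} that tends to $4$, not the right-hand side, and \eqref{eqn:largeisoH} concerns the mean curvature of isoperimetric surfaces (which is precisely what \eqref{eqn:convexity} ties to $V_g'$), not of leaves of the canonical foliation.
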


\begin {lemma} [\protect{Cf.~\cite[p.~433]{Chodosh:2016}}] \label{lem:componentsno}
There can be no $A_1, A_2$ as in Lemma \ref{lem:torusintervals} such that the leaves $\Sigma_{A_1}$ and $\Sigma_{A_2}$ of the canonical foliation both bound isoperimetric regions.
\begin{proof}
Assume, for a contradiction, that such $A_1 < A_2$ exist. Then  
\[
H_{A_2}^{2} - 23 \, \pi \, A_2^{-1} \leq (V_g (A_2)_{-} ')^{-2}  - 23 \, \pi \, A_2^{-1} \leq (V_g (A_1)'_{+})^{-2}  - 23 \, \pi \, A_1^{-1} \leq H_{A_1}^{2} - 23 \, \pi \, A_1^{-1}
\]
from \eqref{eqn:convexity} and \eqref{eqn:torusintervals}. This, however, contradicts \eqref{HestimateNTspheres}.
\end{proof}
\end {lemma}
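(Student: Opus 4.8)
The plan is to argue by contradiction and combine the monotonicity of the auxiliary function $A \mapsto V_g'(A)^{-2} - 23 \, \pi \, A^{-1}$ from Lemma \ref{lem:torusintervals} with the sharp lower bound on the mean curvature of the leaves of the canonical foliation recorded in \eqref{HestimateNTspheres}. Suppose that $A_1 < A_2$ are as in Lemma \ref{lem:torusintervals}---so that for every $A \in (A_1, A_2)$ there is an isoperimetric region the boundary of whose unique large component has non-zero genus---and that, in addition, the leaves $\Sigma_{A_1}$ and $\Sigma_{A_2}$ of the canonical foliation both bound isoperimetric regions.

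First I would unwind what it means for $\Sigma_{A_i}$ to bound an isoperimetric region: the isoperimetric profile is then differentiable at $A_i$ with $V_g'(A_i) = H_{A_i}^{-1}$, the reciprocal of the mean curvature of the leaf. More precisely, since $\Sigma_{A_2}$ is isoperimetric the left derivative satisfies $(V_g(A_2)_-')^{-2} \geq H_{A_2}^2$ (the volume enclosed grows at least as fast as the foliation predicts coming from below, because no competitor can do better), while since $\Sigma_{A_1}$ is isoperimetric the right derivative satisfies $(V_g(A_1)_+')^{-2} \leq H_{A_1}^2$; these are exactly the one-sided comparisons encoded in \eqref{eqn:convexity}. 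Chaining these with the monotonicity inequality \eqref{eqn:torusintervals} of Lemma \ref{lem:torusintervals} gives
\[
H_{A_2}^2 - 23 \, \pi \, A_2^{-1} \leq (V_g(A_2)_-')^{-2} - 23 \, \pi \, A_2^{-1} \leq (V_g(A_1)_+')^{-2} - 23 \, \pi \, A_1^{-1} \leq H_{A_1}^2 - 23 \, \pi \, A_1^{-1}.
\]
In other words, the quantity $A \mapsto H_A^2 - 23 \, \pi \, A^{-1}$ would have to be non-increasing between $A_1$ and $A_2$.

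The contradiction then comes from the explicit asymptotics of $H_A$ along the canonical foliation. From the Hawking mass expansion \eqref{aux:Hawkingoutline}, $H_A^2 = 4 + 16 \, \pi \, A^{-1} - (16\,\pi)^{3/2} m \, A^{-3/2} + O(A^{-2})$, so that
\[
H_A^2 - 23 \, \pi \, A^{-1} = 4 - 7 \, \pi \, A^{-1} - (16\,\pi)^{3/2} m \, A^{-3/2} + O(A^{-2}),
\]
which, since $7\pi > 0$, is \emph{strictly increasing} in $A$ for all sufficiently large $A$---this is the content of \eqref{HestimateNTspheres}. Hence $H_{A_1}^2 - 23\,\pi\,A_1^{-1} < H_{A_2}^2 - 23\,\pi\,A_2^{-1}$, contradicting the displayed chain of inequalities. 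Therefore no such pair $A_1 < A_2$ can exist.

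The main obstacle---really the only subtle point---is matching the one-sided derivative comparisons correctly: one must be careful that "$\Sigma_{A}$ bounds an isoperimetric region" yields an \emph{upper} bound on $V_g'(A)_+^{-2}$ at the left endpoint and a \emph{lower} bound on $V_g'(A)_-^{-2}$ at the right endpoint (rather than the reverse), so that the chain closes up in the direction forced by Lemma \ref{lem:torusintervals}; this is exactly why the non-increasing conclusion of that lemma clashes with the strictly increasing behaviour of $H_A^2 - 23\pi A^{-1}$. All of this is packaged into the cited estimates \eqref{eqn:convexity}, \eqref{eqn:torusintervals}, and \eqref{HestimateNTspheres}, so the proof is short; the work is in invoking them in the right order.
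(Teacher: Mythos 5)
Your proof is correct and follows the same route as the paper: chain \eqref{eqn:convexity} at the two endpoints with the monotonicity estimate \eqref{eqn:torusintervals}, and observe that the resulting inequality $H_{A_2}^2 - 23\pi A_2^{-1} \leq H_{A_1}^2 - 23\pi A_1^{-1}$ conflicts with \eqref{HestimateNTspheres} since $16\pi < 23\pi$. The extra detail you supply on the asymptotics of $H_A^2$ is just an unwinding of \eqref{HestimateNTspheres}, and your care about the direction of the one-sided derivative comparisons is exactly the point worth being careful about.
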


\begin {proof} [Proof of Theorem \ref{thm:main2}]

Lemma \ref{torusestimateisoperimetric} shows that $\mathcal{I} \subset (A_0, \infty)$ is not connected at infinity. On the other hand, Lemma \ref{lem:componentsno} gives that $\mathcal{I}$ has no bounded components provided that $A_0 > 0$ is sufficiently large. It follows that $\mathcal{I}$ is empty as long as $A_{0}>1$ is taken sufficiently large. Thus every leaf of the canonical foliation $\Sigma_A$ bounds an isoperimetric region. Thus 
\[
V_g(A) = \vol_g (\Omega_A)
\] 
for all $A > A_0$. In particular, the isoperimetric profile is a smooth function on $(A_0, \infty)$. Using the estimates for the lapse function of the canonical foliation and the geometry of the leaves from Section \ref{sec:hawkingalongfoliation}, we compute that
\[
2 \, V_g''(A) \, V_g'(A)^{-3} \, A^2 = 16 \, \pi + o (1)
\]
as $A \to \infty$. Assume that there exists \textit{another} isoperimetric region $\tilde \Omega_A$ for area $A > A_0$. We know from Proposition \ref{prop:largeisosphere} that the boundary of its unique large component has non-zero genus. From \eqref{auxisoderivative}, we obtain the estimate 
\[
2 \, V_g''(A) \, V_g'(A)^{-3} \, A^2 \geq 24 \, \pi + o (1). 
\]
This contradiction shows that $\Omega_A$ is the \textit{unique} isoperimetric region for area $A$.
\end{proof}


\appendix

\section {Asymptotically hyperbolic initial data} \label{sec:initialdata}

Just as A.~Neves and G.~Tian do in \cite{Neves-Tian:2009}, we work with two different standard models for three-dimensional hyperbolic space. We use $\bar g$ to denote the hyperbolic metric on $\R^3$ given by
\[
\bar g = d \, r\otimes d \, r + \sinh^2 r \, g_{\mathbb{S}^2}
\]
in polar coordinates. We will also use the disk model for hyperbolic space with metric tensor 
\[
\frac{4}{(1 - s^2)^2} \left( d\, s \otimes d \, s + s^2 g_{\mathbb{S}^2} \right) 
\]
in polar coordinates on $B_1(0)$. The radial map  
\[
s \mapsto r (s) = \log \frac{1 + s}{1 - s}
\]
induces an isometry $\Psi : B_1(0) \to \R^3$ between these models. In particular, 
\begin{align} \label{diskplaneaux}
\Psi^* \left( (1 + \cosh r)^{-2} \,  \bar g \right) = d\, s \otimes d \, s + s^2 g_{\mathbb{S}^2}.
\end{align}

When $\Sigma \subset \R^3$ is a surface, we use $\Sigma_\delta \subset B_1(0)$ to denote the Euclidean surface with 
\[
\Psi(\Sigma_\delta) = \Sigma.
\]

We say that a Riemannian $3$-manifold $(M, g)$ is \textit{asymptotic to Schwarzschild-anti-deSitter of mass $m > 0$} if it is connected and if there are a bounded open set $U \subset M$ and a diffeomorphism   
\[
M \setminus U \cong_x \R^3 \setminus B_1(0)
\]
such that, in polar coordinates, 
\begin{align} \label{def:SadS}
g = dr\otimes dr + \left( \sinh^2 r + \frac{2 m }{ 3 \sinh r}  \right) g_{\mathbb{S}^2} + Q
\end{align}
where   
\[
| Q  |_{\bar g} + | \bar \nabla Q  |_{\bar g} +  |  \bar \nabla^{2} Q |_{\bar g} = O ( e^{- 5 \, r} ).
\]
Note that 
\[
R + 6 = O (e^{- 5 \, r})
\]
where $R$ is the scalar curvature of $(M, g)$. 

Our convention here differs from that used in \cite{Neves-Tian:2009} by a factor of $2$ for the mass. 

We usually require \textit{in addition} that $(M, g)$ is complete and such that $\partial M$ is connected and the only closed $H = 2$ surface in $(M, g)$. It can be shown\footnote{This follows exactly as in the asymptotically flat case, cf.~e.g~Section 4 in \cite{Huisken-Ilmanen:2001}. The area functional is replaced by the appropriate brane functional as in \cite[Proposition 3.1]{Chodosh:2016}.} that, in this case, $M$ itself is diffeomorphic to $\mathbb{R}^{3}\setminus B_{1}(0)$.

Of course, Schwarzschild-anti-deSitter initial data itself satisfies these conditions. We recall that a closed form of the Schwarzschild-anti-deSitter metric (with boundary $H=2$) is given by 
\[
M = \{x \in \R^3 : |x| \geq 2 \, m\} \qquad \text{ and } \qquad g = \frac{1}{1+s^{2}-2 \, m \, s^{-1}} d \, s\otimes d \, s + s^{2} \, g_{\mathbb{S}^{2}}.
\]

We recall in passing that $(M, g)$ is said to be \textit{asymptotically hyperbolic} if, in place of  \eqref{def:SadS}, we have that 
\[
g = \bar g + P \qquad \text{ where } \qquad | P  |_{\bar g} + | \bar \nabla P  |_{\bar g} +  |  \bar \nabla^{2} P |_{\bar g} = O ( e^{- 3 \, r} ).
\]

We use $S_r \subset M$ to denote the image of the centered coordinate sphere $S_r(0)$ under this diffeomorphism, and let $B_r$ be the bounded component of $M \setminus S_r$. 

Let $\Sigma \subset M$ be a closed surface. We let 
\[
\underline r (\Sigma) = \sup \{ r > 1 : \Sigma \text{ encloses } B_r\} \qquad \text{ and } \qquad \overline r (\Sigma) = \inf \{ r > 1 : \Sigma \subset B_r\}.
\]


\section {Estimates for stable CMC spheres}

In this section, we recall several estimates for stable constant mean curvature surfaces $\Sigma$ in Riemannian $3$-manifolds $(M, g)$ that are used throughout the paper. Let $H$ denote the mean curvature of $\Sigma$ with respect to the (designated or natural) outward pointing unit normal $\nu$. \\

The \textit{Christodoulou-Yau estimate} for stable constant mean curvature spheres stated as \eqref{CY}  below is derived in \cite{Christodoulou-Yau:1988}. The proof of the weaker estimate \eqref{CYR} for surfaces of non-zero genus follows the same lines, using in addition the Brill-Noether theorem exactly as in the proof of Theorem 6 in \cite{Ritore-Ros:1992}.

\begin {lemma} [Cf.~ \cite{Christodoulou-Yau:1988, Ritore-Ros:1992}]
We have  
\begin{align} \label{CYR}
\frac{2}{3} \int_\Sigma |\mathring h|^2 + \frac{2}{3} \int_\Sigma (R + 6) + \int_\Sigma (H^2 - 4)  \leq \frac{64}{3} \, \pi.
\end{align}
The bound on the right-hand side can be sharpened to $16 \, \pi$ if $\Sigma$ has genus zero, so  in this case   
\begin{align} \label{CY}
\frac{2}{3} \int_\Sigma |\mathring h|^2 + \frac{2}{3} \int_{\Sigma} (R + 6)  \leq  16 \, \pi - \int_\Sigma (H^2 - 4) = 16 \, \pi - (H^2 - 4) \,  |\Sigma|.
\end{align}
\end {lemma}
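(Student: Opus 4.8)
The plan is to test the stability inequality of $\Sigma$ against the components of a balanced conformal parametrization, following \cite{Christodoulou-Yau:1988} and, for the non-zero genus case, \cite{Ritore-Ros:1992}. Recall that stability of $\Sigma$ as a constant mean curvature surface means that
\[
\int_\Sigma |\nabla f|^2 \geq \int_\Sigma \left( |h|^2 + \Ric(\nu, \nu) \right) f^2
\]
for every $f \in C^\infty(\Sigma)$ with $\int_\Sigma f = 0$. I will also use the Gauss equation together with the pointwise identity $|h|^2 = |\mathring h|^2 + \frac12 H^2$, which combine to give
\[
|h|^2 + \Ric(\nu, \nu) = \frac12 |\mathring h|^2 + \frac34 H^2 + \frac12 R - K,
\]
where $K$ denotes the Gaussian curvature of $\Sigma$.

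First, I would construct a conformal map $u = (u^1, u^2, u^3) : \Sigma \to \mathbb{S}^2 \subset \R^3$ of some degree $d$ with $\int_\Sigma u^i = 0$ for $i = 1, 2, 3$. When $\Sigma$ has genus zero, uniformization provides a conformal diffeomorphism onto the round $\mathbb{S}^2$, so $d = 1$; when $\Sigma$ has positive genus $\gamma$, the Brill-Noether theorem --- used exactly as in the proof of Theorem 6 in \cite{Ritore-Ros:1992} --- yields a meromorphic function on $\Sigma$ of degree $d \leq \lfloor (\gamma + 3)/2 \rfloor$, which I regard as a conformal branched cover onto $\mathbb{S}^2$. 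In either case the normalization $\int_\Sigma u^i = 0$ is arranged by postcomposing with a suitable conformal automorphism of $\mathbb{S}^2$; its existence follows from Hersch's balancing argument, i.e.\ a degree argument applied to the map $\xi \mapsto |\Sigma|^{-1} \int_\Sigma \phi_\xi \circ u$ as $\phi_\xi$ ranges over the hyperbolic translations of the open unit ball, using that the preimage under $u$ of a point is finite and hence $\mu$-null, so that this map extends continuously to the identity on the boundary sphere.

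Second, I would insert $f = u^i$ into the stability inequality, sum over $i$, and use $\sum_i (u^i)^2 \equiv 1$ together with the conformal-energy identity $\sum_i \int_\Sigma |\nabla u^i|^2 = \int_\Sigma |du|^2 = 8 \pi d$ to obtain $\int_\Sigma ( |h|^2 + \Ric(\nu,\nu) ) \leq 8 \pi d$. Substituting the Gauss-equation identity above, integrating, using Gauss-Bonnet $\int_\Sigma K = 2 \pi \chi(\Sigma) = 2\pi (2 - 2\gamma)$, and rewriting $\int_\Sigma H^2 = \int_\Sigma (H^2 - 4) + 4 |\Sigma|$ and $\int_\Sigma R = \int_\Sigma (R + 6) - 6 |\Sigma|$ --- so that the $|\Sigma|$-terms cancel in the combination $\int_\Sigma H^2 + \frac23 \int_\Sigma R$ --- I arrive at
\[
\frac23 \int_\Sigma |\mathring h|^2 + \int_\Sigma (H^2 - 4) + \frac23 \int_\Sigma (R + 6) \leq \frac43 \left( 8 \pi d + 4 \pi (1 - \gamma) \right).
\]
For $\gamma = 0$ and $d = 1$ the right-hand side equals $16 \pi$, which is \eqref{CY}; for $\gamma \geq 1$ the bound $d \leq (\gamma + 3)/2$ gives $8 \pi d + 4 \pi (1 - \gamma) \leq 16 \pi$, so the right-hand side is at most $\frac{64}{3} \pi$, which is \eqref{CYR}. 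The case $\gamma = 1$, where $d = 2$, shows that the constant $\frac{64}{3}\pi$ cannot be improved by this method.

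The only genuinely delicate step is the balancing of the conformal parametrization: one must check that the ball-valued map $\xi \mapsto |\Sigma|^{-1}\int_\Sigma \phi_\xi \circ u$ extends continuously to the closed unit ball, restricting to the identity on the boundary, and is therefore surjective, so that $0$ lies in its image. The remaining ingredients --- the Gauss equation, Gauss-Bonnet, and the energy identity $\int_\Sigma |du|^2 = 8\pi d$ for a degree-$d$ conformal (branched) cover, for which it suffices that the branch locus is $\mu$-null --- are routine.
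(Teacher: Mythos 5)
Your proposal is correct and takes essentially the same approach the paper points to via its citations to \cite{Christodoulou-Yau:1988} and \cite{Ritore-Ros:1992}: test stability against the coordinates of a balanced conformal (branched) map to $\mathbb{S}^2$, use the energy bound $\int_\Sigma |du|^2 = 8\pi d$, the traced Gauss equation, and Gauss--Bonnet, with the Brill--Noether bound $d \le \lfloor (\gamma+3)/2 \rfloor$ supplying the degree estimate in positive genus. The algebra, including the cancellation of the $|\Sigma|$ terms in the combination $\int_\Sigma H^2 + \tfrac{2}{3}\int_\Sigma R$ and the identification of genus one as the extremal case for the constant $\tfrac{64}{3}\pi$, checks out.
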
 

For the remaining results included in this section, we assume that $(M, g)$ is asymptotic to Schwarzschild-anti-deSitter with mass $m > 0$, that $\Sigma$ is a stable constant mean curvature  \textit{sphere}, and that $\Sigma$ encloses the centered coordinate ball $B_{2}$. 

\begin {lemma} [Cf.~Proposition 3.4 in \cite{Neves-Tian:2009}] We have  
\begin{equation} \label{deltar}
\Delta_\Sigma r = (4 - 2 \, |\nabla_\Sigma r|^2) e^{- 2 \, r} - (H - 2) + (H - 2) \, (1 - \langle \nu, \nabla r \rangle) + (1 - \langle \nu, \nabla r \rangle)^2 + O (e^{- 3 \, r}).
\end{equation}
\end {lemma}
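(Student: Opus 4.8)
\textbf{Proof proposal for Lemma (the expansion \eqref{deltar} for $\Delta_\Sigma r$).}

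The plan is to compute $\Delta_\Sigma r$ from the general surface identity relating the Laplacian on $\Sigma$ of the restriction of a function to its ambient Laplacian and Hessian, and then to insert the near-hyperbolic structure of $(M,g)$ together with the constant mean curvature of $\Sigma$. First I would recall that for any smooth function $f$ on $M$ and any closed hypersurface $\Sigma$ with outward unit normal $\nu$ and mean curvature $H$ (sum of principal curvatures, with the convention consistent with \eqref{eqn:Hawkingmassbound}), one has the well-known splitting
\[
\Delta_\Sigma f = \Delta_M f - \nabla^2 f (\nu, \nu) - H \, \langle \nu, \nabla f \rangle.
\]
I would apply this with $f = r$, the radial coordinate in the chart at infinity. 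Because the metric \eqref{def:SadS} differs from the hyperbolic metric $\bar g = dr \otimes dr + \sinh^2 r \, g_{\mathbb{S}^2}$ only by the term $\frac{2m}{3\sinh r} g_{\mathbb{S}^2} + Q$ with $|Q|_{\bar g} + |\bar\nabla Q|_{\bar g} + |\bar\nabla^2 Q|_{\bar g} = O(e^{-5r})$, the quantities $\Delta_M r$ and $\nabla^2 r(\nu,\nu)$ are computed from their hyperbolic values with an error that I expect to be $O(e^{-3r})$ — this matches the claimed error term. In hyperbolic space, $r$ is a distance function, so $\bar\nabla^2 r = \coth r\,(\bar g - dr \otimes dr)$ and hence $\bar\Delta r = 2 \coth r$; the correction from the $\frac{2m}{3\sinh r}$ term and from $Q$ contributes at order $O(e^{-3r})$ as well, which I would check by a direct but routine computation of Christoffel symbols. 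Thus $\nabla^2 r(\nu,\nu) = \coth r \,(1 - \langle \nu, \nabla r\rangle^2) + O(e^{-3r})$ and $\Delta_M r = 2\coth r + O(e^{-3r})$.

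Next I would substitute these into the splitting formula to get
\[
\Delta_\Sigma r = 2\coth r - \coth r\,(1 - \langle \nu, \nabla r\rangle^2) - H\,\langle \nu, \nabla r\rangle + O(e^{-3r}),
\]
and then massage the trigonometric and normal-component factors into the asserted form. I would write $\coth r = 1 + 2e^{-2r} + O(e^{-4r})$, note that on $\Sigma$ we have $|\nabla_\Sigma r|^2 = 1 - \langle \nu, \nabla r\rangle^2$ (since $\nabla r$ is a unit vector in the nearly-hyperbolic metric up to $O(e^{-3r})$, split into tangential and normal parts), so that $1 - \langle \nu, \nabla r\rangle^2 = |\nabla_\Sigma r|^2$ and hence $\coth r \, \langle \nu, \nabla r\rangle^2 = \coth r\,(1 - |\nabla_\Sigma r|^2)$. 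Collecting terms:
\[
\Delta_\Sigma r = \coth r - \coth r \cdot (1 - |\nabla_\Sigma r|^2) + \big(\text{rewrite using } \coth r = 1 + 2e^{-2r}\big) \cdots
\]
Carrying this through, the $2\coth r - \coth r(1-|\nabla_\Sigma r|^2) = \coth r(1 + |\nabla_\Sigma r|^2)$ combines with $-H\langle \nu,\nabla r\rangle$; expanding $\coth r$ and writing $H\langle \nu, \nabla r\rangle = H - H(1 - \langle \nu, \nabla r\rangle)$ produces the $-(H-2)$ term, the $(H-2)(1-\langle\nu,\nabla r\rangle)$ term, and — after also expanding $-2(1-\langle\nu,\nabla r\rangle)$ coming from the ``$2$'' versus ``$H$'' bookkeeping and comparing with $-(1-\langle\nu,\nabla r\rangle)^2$ — exactly the $(1-\langle\nu,\nabla r\rangle)^2$ term, with $(4 - 2|\nabla_\Sigma r|^2)e^{-2r}$ emerging as the $e^{-2r}$-order piece of $\coth r(1+|\nabla_\Sigma r|^2) - 2$. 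All remaining discrepancies fall into $O(e^{-3r})$.

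The main obstacle I anticipate is not conceptual but the careful bookkeeping of which terms land at order $e^{-2r}$ versus $e^{-3r}$, and in particular verifying that the contribution of the Schwarzschild-anti-deSitter correction $\frac{2m}{3\sinh r}\, g_{\mathbb{S}^2}$ to $\Delta_M r$ and to $\nabla^2 r(\nu,\nu)$ is genuinely $O(e^{-3r})$ and does not leak into the $e^{-2r}$ coefficient — since a stray $m$-dependent term at order $e^{-2r}$ would change the stated formula. A clean way to handle this is to compute $\Delta_M r$ and $\nabla^2 r$ exactly for the rotationally symmetric background $dr\otimes dr + \phi(r)^2 g_{\mathbb{S}^2}$ with $\phi(r)^2 = \sinh^2 r + \frac{2m}{3\sinh r}$, obtaining $\Delta_M r = 2\,\phi'(r)/\phi(r)$, and then Taylor expanding $\phi'/\phi = \coth r + O(e^{-3r})$; the perturbation $Q$ contributes $O(e^{-3r})$ by the decay assumption and the standard formula for the Laplacian under a metric perturbation. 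Once this is in hand, everything else is the elementary algebra sketched above, exactly along the lines of Proposition 3.4 in \cite{Neves-Tian:2009}, and the proof concludes.
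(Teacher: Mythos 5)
Your plan is the standard one and matches the computation in Proposition 3.4 of Neves--Tian, which the paper cites for this lemma without reproducing a proof. The key ingredients are all correctly identified: the splitting identity $\Delta_\Sigma r = \Delta_M r - \nabla^2 r(\nu,\nu) - H\langle\nu,\nabla r\rangle$; the warped-product formula $\nabla^2 r = (\phi'/\phi)(g - dr\otimes dr)$ with $\phi^2 = \sinh^2 r + \tfrac{2m}{3\sinh r}$, giving $\phi'/\phi = \coth r + O(e^{-3r})$ so that the $m$-dependence genuinely falls into the error; the decay hypothesis on $Q$ to handle the remaining perturbation; and the rewriting of $H\langle\nu,\nabla r\rangle$ via $(H-2)$ and $(1 - \langle\nu,\nabla r\rangle)$.

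However, your ``collecting terms'' step contains a substitution slip that, taken literally, would not yield the stated formula. After using $1 - \langle\nu,\nabla r\rangle^2 = |\nabla_\Sigma r|^2$, the term $2\coth r - \coth r\,(1 - \langle\nu,\nabla r\rangle^2)$ becomes $2\coth r - \coth r\,|\nabla_\Sigma r|^2 = \coth r\,(2 - |\nabla_\Sigma r|^2)$, \emph{not} $\coth r\,(1 + |\nabla_\Sigma r|^2)$ as you wrote (you appear to have applied the substitution to $\langle\nu,\nabla r\rangle^2$ rather than to $1 - \langle\nu,\nabla r\rangle^2$, then dropped a factor of $2$ in the display). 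With the corrected expression, set $a = 1 - \langle\nu,\nabla r\rangle$ so that $|\nabla_\Sigma r|^2 = 2a - a^2$; then $\coth r\,(2 - |\nabla_\Sigma r|^2) = (1 + 2e^{-2r})(2 - 2a + a^2) + O(e^{-4r}) = 2 - 2a + a^2 + (4 - 2|\nabla_\Sigma r|^2)e^{-2r} + O(e^{-4r})$, and adding $-H(1-a) = -H + Ha$ gives $-(H-2) + (H-2)a + a^2 + (4-2|\nabla_\Sigma r|^2)e^{-2r} + O(e^{-3r})$, exactly the asserted expansion. So the slip is a bookkeeping error, not a conceptual gap, and the argument you outline does prove the lemma once the algebra is done carefully.
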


\begin {lemma} [Cf.~Proposition 4.2 in \cite{Neves-Tian:2009}] There are constants $\lambda > 1$ and $r_0 > 1$ that depend only on $(M, g)$ with the following property. Assume that $\underline r (\Sigma) \geq r_0$. Then     
\begin {align*}
\frac{1}{\lambda} \, e^{2 \, \underline r (\Sigma)} & \leq |\Sigma| \leq \lambda \, e^{2 \, \overline r (\Sigma)} \\
\int_\Sigma \left( 1 - \langle \nu, \nabla r \rangle \right)^2 & \leq \lambda \, e^{-  \, \underline r (\Sigma)}
\end {align*}
and, for every integer $k \geq 1$, 
\begin {align*}
k \int_\Sigma |\nabla_\Sigma r|^2 \, e^{- k \, r} &\leq \lambda \, e^{- k \, \underline {r}(\Sigma)}.
\end{align*}
\end {lemma}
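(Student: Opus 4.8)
The plan is to invoke Proposition~4.2 of \cite{Neves-Tian:2009}. Its hypotheses hold in our setting, the only point to reconcile being that our normalization \eqref{def:SadS} of the mass differs from the one in \cite{Neves-Tian:2009} by a factor of two, which is harmless and is absorbed into $\lambda$. For the reader's convenience I would also recall how the three groups of estimates come about and point to the substantive parts.

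The area lower bound is elementary. The compact region $\Omega$ bounded by $\Sigma$ contains $B_{\underline r(\Sigma)}$, so comparison with \eqref{def:SadS} up to a fixed compact error gives $\vol_g(\Omega) \geq \frac{1}{\lambda}\, e^{2 \underline r(\Sigma)}$ once $r_0$ is large. Since $|\nabla r|_g = 1$ and $\Delta_g r = 2\coth r + O(e^{-3r}) \geq 2$ on $\{r \geq r_0\}$, applying the divergence theorem to $\nabla r$ on $\Omega \setminus B_2$ gives $|\Sigma| \geq \int_\Sigma \langle \nu, \nabla r\rangle = \int_{\Omega \setminus B_2} \Delta_g r - |S_2| \geq 2\, \vol_g(\Omega) - O(1) \geq \frac{1}{\lambda}\, e^{2 \underline r(\Sigma)}$ after enlarging $\lambda$. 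For the area upper bound, comparing $\Sigma$ with the coordinate sphere $S_{\overline r(\Sigma)}$ at a point where they are tangent, with $\Sigma$ lying on the inner side, the comparison principle for the mean curvature yields $H \geq 2\coth\overline r(\Sigma) + O(e^{-3 \overline r(\Sigma)})$, hence $H^2 - 4 \geq \frac{1}{\lambda}\, e^{-2 \overline r(\Sigma)}$ for $r_0$ large. Since $\Sigma$ has genus zero, the Christodoulou--Yau estimate \eqref{CY} gives $(H^2-4)|\Sigma| \leq 16\pi + O(e^{-5 \underline r(\Sigma)})|\Sigma|$; once the rough position estimate $\overline r(\Sigma) \leq \underline r(\Sigma) + O(1)$ is available, the right-hand side is $16\pi + o(1)$, and combining it with the lower bound for $H^2-4$ yields $|\Sigma| \leq \lambda\, e^{2 \overline r(\Sigma)}$. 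The position estimate --- equivalently, the exclusion of long thin necks on $\Sigma$ --- is the first point where the stability of $\Sigma$ is used, and is carried out in \cite{Neves-Tian:2009}.

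The estimate $\int_\Sigma(1 - \langle\nu,\nabla r\rangle)^2 \leq \lambda\, e^{-\underline r(\Sigma)}$ is the heart of the matter. I would obtain it by testing the stability inequality for $\Sigma$ against a mean-value-adjusted version of $\langle\nu, X\rangle$, where $X = (\sinh r)\, \partial_r$ is the conformal Killing field of the hyperbolic model $\bar g$ generating dilations in the ball picture, so $\mathcal{L}_X \bar g = 2(\cosh r)\, \bar g$. Because the flow of $X$ acts conformally and $\Sigma$ has constant mean curvature, $\langle\nu, X\rangle$ almost lies in the kernel of the Jacobi operator of $\Sigma$; inserting it into the stability inequality and expanding by means of \eqref{deltar}, the asymptotics of the metric, and the bounds already obtained, one arrives at the claimed estimate. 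The gradient decay is then a consequence of \eqref{deltar}: integrating $\Delta_\Sigma(e^{-kr}) = e^{-kr}\,(k^2 |\nabla_\Sigma r|^2 - k\, \Delta_\Sigma r)$ over $\Sigma$ gives $k \int_\Sigma e^{-kr}\, |\nabla_\Sigma r|^2 = \int_\Sigma e^{-kr}\, \Delta_\Sigma r$, and, inserting \eqref{deltar}, every term on the right is controlled by the area bounds, the estimate for $\int_\Sigma(1 - \langle\nu,\nabla r\rangle)^2$, and the favorable sign of the $-(H-2)$ term that comes from $H \geq 2$; the dominant contribution, $4 \int_\Sigma e^{-(k+2)r} \leq 4\, |\Sigma|\, e^{-(k+2)\underline r(\Sigma)} \leq \lambda\, e^{-k \underline r(\Sigma)}$ by the area bounds and $\overline r(\Sigma) \leq \underline r(\Sigma) + O(1)$, gives the claim. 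The main obstacle is thus the reliance on stability --- the rough position estimate that rules out necks, and, above all, the refined integral bound on $1 - \langle\nu,\nabla r\rangle$ --- both of which are already in \cite{Neves-Tian:2009}, so that nothing new is needed here.
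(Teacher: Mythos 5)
Your plan matches the paper's: this lemma is recalled verbatim from Proposition 4.2 of Neves and Tian, and the paper itself supplies no proof beyond the citation (aside from absorbing the factor-of-two mass normalization into the constants, which you correctly note). Your supplementary sketch is useful context but does not quite reproduce Neves--Tian's actual argument for the estimate on $\int_\Sigma (1 - \langle\nu,\nabla r\rangle)^2$: they do not test the stability inequality against $\langle\nu,X\rangle$ for the conformal dilation field $X = (\sinh r)\,\partial_r$; rather, they integrate the identity \eqref{deltar} over the closed surface $\Sigma$ and combine it with the isoperimetric inequality in the disk model --- exactly the steps that this paper itself sharpens in the argument leading to \eqref{improvedgradientdecay}. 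Since you explicitly defer to Neves--Tian for the substantive points (the rough position estimate $\overline r(\Sigma) \le \underline r(\Sigma) + O(1)$ and the integral bound on $1 - \langle\nu,\nabla r\rangle$), this discrepancy is harmless; but as a heuristic, the claim that $\langle\nu, X\rangle$ ``almost lies in the kernel of the Jacobi operator'' merely because $X$ is conformal Killing would need a computation, since conformal flows do not preserve constant mean curvature, and the error one gets is of size $2\sinh r\,(1 - \langle\nu,\nabla r\rangle)$ --- precisely the quantity one is trying to control.
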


\begin {lemma} [Cf.~e.g.~Lemma 5.2 in \cite{Neves-Tian:2009}] 
As $\underline {r} (\Sigma) \to \infty$, 
\begin{align} \label{eqn:secondffbound} 
\sup_{x \in \Sigma} |\mathring h (x)| = o (1).
\end{align}
\end {lemma}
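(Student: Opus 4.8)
\noindent\emph{Proof proposal.} The plan is to argue by contradiction: suppose there were $\varepsilon_0 > 0$ and a sequence of stable constant mean curvature spheres $\Sigma_i$ in $(M,g)$, each enclosing $B_2$, with $\underline r(\Sigma_i) \to \infty$ but $\max_{\Sigma_i}|\mathring h| \geq \varepsilon_0$; write $H_i$ for the mean curvature of $\Sigma_i$. First I would observe that $H_i \to 2$. Indeed, $\Sigma_i$ is tangent to the coordinate sphere $S_{\underline r(\Sigma_i)}$ from outside at a point where $r|_{\Sigma_i}$ is smallest, and tangent to $S_{\overline r(\Sigma_i)}$ from inside at a point where $r|_{\Sigma_i}$ is largest; since $S_\rho$ has mean curvature $2\coth\rho + O(e^{-3\rho})$, the comparison principle for the mean curvature operator gives
\[
2\coth\overline r(\Sigma_i) + O(e^{-3\,\overline r(\Sigma_i)}) \;\leq\; H_i \;\leq\; 2\coth\underline r(\Sigma_i) + O(e^{-3\,\underline r(\Sigma_i)}),
\]
and both sides tend to $2$ as $\underline r(\Sigma_i) \to \infty$.

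With $H_i$ bounded, I would run a blow-up. Choose $p_i \in \Sigma_i$ with $|\mathring h(p_i)| = \max_{\Sigma_i}|\mathring h| =: \mu_i \geq \varepsilon_0$; note that $p_i$ recedes to infinity in the asymptotically hyperbolic end. Suppose first that $\mu_i \to \infty$ along a subsequence. Rescaling the metric by $\mu_i^2$ about $p_i$ yields pointed manifolds that converge in $C^\infty_{\mathrm{loc}}$ to Euclidean $\R^3$, on which the rescaled surfaces have $|\mathring h| \leq 1$ with equality at the base point, mean curvature $H_i/\mu_i \to 0$, and hence $|A| \leq 1 + o(1)$. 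The standard compactness theory for embedded stable constant mean curvature surfaces of uniformly bounded curvature then produces a limit: a complete two-sided minimal surface in $\R^3$ which, since the volume constraint disappears in the limit ($|\Sigma_i| \to \infty$), is stable even without that constraint, and is therefore a flat plane by the theorem of Fischer-Colbrie and Schoen (or do Carmo--Peng, or Pogorelov). This contradicts $|\mathring h| = 1$ at the base point, so $\mu_i$ is bounded, and then so is $\sup_{\Sigma_i}|A|$ uniformly in $i$ (using $H_i \to 2$).

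It remains to treat the case $\mu_i$ bounded, in which the contradiction with $\max_{\Sigma_i}|\mathring h| \geq \varepsilon_0$ is reached. Now $|A_i| \leq C$ uniformly, the pointed manifolds $((M,g),p_i)$ converge in $C^\infty_{\mathrm{loc}}$ to hyperbolic space $\mathbb{H}^3$, and the same compactness theory gives a subsequential limit $\Sigma_\infty \subset \mathbb{H}^3$: a complete two-sided surface of constant mean curvature $2$, stable without a volume constraint, with $|\mathring h| \geq \varepsilon_0$ at the base point. But $\Sigma_\infty$ must be non-compact, since a closed surface in $\mathbb{H}^3$ is tangent to a large enough geodesic sphere from inside and so has mean curvature exceeding $2$ somewhere; and a complete non-compact surface in $\mathbb{H}^3$ that has constant mean curvature $2$ and is stable without a volume constraint is a horosphere by the classification of da Silveira, hence totally umbilic --- contradicting $|\mathring h| \geq \varepsilon_0 > 0$. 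This proves $\max_{\Sigma_i}|\mathring h| \to 0$. I expect the main obstacle to be the compactness input --- extracting the smooth limit surfaces (or laminations) from the uniform curvature bounds, which rests on the standard but not formal local area bounds for embedded stable constant mean curvature surfaces --- together with importing the two model-space classification theorems and checking that volume-preserving stability passes to unconstrained stability in each limit. An alternative, available once one has the a priori bound $\overline r(\Sigma) \leq \underline r(\Sigma) + O(1)$, is to prove $\int_{\Sigma_i}|\mathring h|^2 \to 0$ --- combining \eqref{CY} with the lower bound $\int_{\Sigma_i} 4\,e^{-2r} \geq 4\pi - o(1)$ coming from the Euclidean isoperimetric inequality for $(\Sigma_i)_\delta$ in the disk model, as in the derivation of \eqref{improvedisoperimetric} --- and then to conclude via the usual small-energy $\varepsilon$-regularity estimate for the Simons equation.
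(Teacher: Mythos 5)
Your blow-up argument is correct in its essentials and runs along the same lines as the proof of the cited Lemma 5.2 of Neves--Tian, which the present paper does not reproduce: argue by contradiction, establish $H \to 2$, rescale about a point of maximal $|\mathring h|$, and invoke the Bernstein-type classifications in the two model spaces. The preliminary step $H \to 2$ via inner/outer tangency with the coordinate spheres $S_{\underline r(\Sigma)}$ and $S_{\overline r(\Sigma)}$ is sound (noting $\overline r(\Sigma) \geq \underline r(\Sigma) \to \infty$ so both sides of your comparison tend to $2$). In the unbounded-curvature case the rescaled limit is a complete stable two-sided minimal surface in $\R^3$, hence a plane by Fischer-Colbrie--Schoen/do Carmo--Peng/Pogorelov; in the bounded-curvature case the pointed limit is a complete noncompact stable CMC-$2$ surface in $\mathbb{H}^3$, hence a horosphere by da Silveira --- and the non-compactness of the limit is correctly ruled in by touching geodesic spheres. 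Both contradict $|\mathring h| \geq \varepsilon_0$ at the base point. You also correctly flag where the real work is: extracting a smooth connected limit surface from the uniform $|A|$ bound (made easier here because each $\Sigma_i$ is a sphere, so genus-dependent structure theory for embedded CMC surfaces applies), and upgrading volume-preserving stability on $\Sigma_i$ to unconstrained stability on the limit by exploiting $|\Sigma_i| \to \infty$ to place a compensating bump far from the support of any given test function.

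One correction to your closing remark: the $L^2$-to-$L^\infty$ alternative does \emph{not} require the pinching $\overline r(\Sigma) \leq \underline r(\Sigma) + O(1)$. Combining \eqref{CY} with the Gauss--Bonnet identity applied to \eqref{expansionGauss}, the two $\int_\Sigma |\mathring h|^2$ terms merge (with a favorable sign) to give
\[
\textstyle\int_\Sigma |\mathring h|^2 = O\big(\textstyle\int_\Sigma e^{-3r}\big) + O\big(\textstyle\int_\Sigma e^{-5r}\big),
\]
and $\int_\Sigma e^{-kr} \leq e^{-(k-2)\underline r(\Sigma)} \int_\Sigma e^{-2r}$ for $k \geq 2$. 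The bound $\int_\Sigma e^{-2r} = O(1)$ then follows by integrating \eqref{deltar} and using the \emph{upper} bound on $(H^2-4)|\Sigma|$ (equivalently, the lower bound $m_H(\Sigma) \geq -o(1)$) that \eqref{CY} already provides --- no pinching and no a priori Hawking mass bound are needed. So the $\varepsilon$-regularity route is in fact available under exactly the hypotheses of the lemma.
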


\begin {lemma} [Cf.~Lemma 6.7 in \cite{Neves-Tian:2009}] \label{lem:tangentialgradiento1}  
As $\underline {r} (\Sigma) \to \infty$, 
\begin{align*} 
\sup_\Sigma |\nabla_\Sigma r| = o (1).
\end{align*}
\end {lemma}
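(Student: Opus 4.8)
The plan is to reduce the estimate to the case of an exact geodesic coordinate sphere, where it becomes a one-line computation, and to obtain that reduction from the near-umbilicity estimate \eqref{eqn:secondffbound} via the compactness machinery of \cite{Neves-Tian:2009}.

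First I would record the model computation. Let $\Sigma_0$ be the geodesic sphere of radius $\rho$ about a point $q$ of $(\R^3,\bar g)$ whose bounded complementary region contains $0$, and set $a = \mathrm{dist}_{\bar g}(q,0)$, so that $\rho - a = \underline r(\Sigma_0)$ and $|\rho - r| \leq a$ on $\Sigma_0$. The hyperbolic law of cosines applied to the geodesic triangle with vertices $0,q,x$ gives
\[
1 - \langle \nu, \nabla r \rangle = \frac{\cosh a - \cosh(\rho - r)}{\sinh\rho\,\sinh r}
\]
on $\Sigma_0$, with $\nu$ the outward unit normal. Since $\cosh(\rho - r) \geq 1$ and $\sinh\rho\,\sinh r \geq \sinh\rho\,\sinh(\rho - a)$, the right-hand side lies in $[0, (\cosh a - 1)/(\sinh\rho\,\sinh(\rho-a))] = [0, O(e^{-2\underline r(\Sigma_0)})]$; hence $\langle\nu,\nabla r\rangle > 0$ and $|\nabla_{\Sigma_0}r|^2 = 1 - \langle\nu,\nabla r\rangle^2 \leq 2(1 - \langle\nu,\nabla r\rangle) = O(e^{-2\underline r(\Sigma_0)})$. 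For geodesic spheres of $(M,g)$ the same holds up to errors $O(e^{-5\rho})$. Note that this uses no control on the ``drift'' $a$ beyond $a < \rho$ and $\underline r(\Sigma_0) \to \infty$.

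Next I would establish that a stable constant mean curvature sphere $\Sigma$ with $\underline r(\Sigma)$ large is $C^1$-close, with $o(1)$ error in the scale-invariant sense, to such a geodesic coordinate sphere. By \eqref{eqn:secondffbound}, $\Sigma$ is nearly umbilic; as $\Sigma$ is a sphere and $H$ is constant, the Codazzi equations make the Hopf quadratic differential of $\Sigma$ $\bar\partial$-closed modulo an $O(e^{-5r})$ error coming from the deviation of $g$ from $\bar g$ in \eqref{def:SadS}, and since $\mathbb{S}^2$ carries no nonzero holomorphic quadratic differential a stability/elliptic argument makes $\mathring h$ small in $L^2$. The homothetic rescaling $\hat g_\Sigma = (\sinh\hat r)^{-2}\bar g|_{\Sigma}$ and a compactness argument — using the uniformization input behind Corollary \ref{cor:uniformizationhat}, or Proposition 3.2 of \cite{DeLellis-Muller:2005} as in Proposition \ref{prop:tildeg} — then give $C^{1,\alpha}$-closeness of $\Sigma$ to an exact geodesic sphere $\Sigma_0$ with $\rho = \hat r(\Sigma) + o(1)$; feeding the prescribed mean curvature equation for $\Sigma$ back into interior elliptic estimates upgrades this to the required $C^1$-estimate with $o(1)$ error. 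Since then $\underline r(\Sigma_0) = \underline r(\Sigma) + o(1) \to \infty$ and the normal and position of $\Sigma$ differ from those of $\Sigma_0$ by $o(1)$, the model computation yields $\sup_\Sigma|\nabla_\Sigma r| \leq \sup_{\Sigma_0}|\nabla_{\Sigma_0}r| + o(1) = o(1)$.

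The hard part will be the structure result of the previous paragraph: quantitatively bounding the $C^1$-deviation of a nearly-umbilic constant mean curvature sphere from an \emph{exact} geodesic coordinate sphere by $o(1)$. This means combining the rigidity of constant mean curvature spheres in hyperbolic space with the decay of $Q$ and with elliptic estimates performed after the homothetic rescaling, and keeping careful track of how that rescaling interacts with the $O(e^{-5r})$ ambient error terms. The passage from this structure result to the stated gradient estimate, by contrast, is the short explicit computation for geodesic spheres above.
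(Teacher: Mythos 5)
Your two-step structure — an explicit model computation for geodesic spheres via the hyperbolic law of cosines, followed by a reduction of the general case to that model through a closeness-to-geodesic-sphere result — is the same strategy that underlies Lemma 6.7 of Neves--Tian, which the paper cites without reproducing a proof, so in broad outline you have reconstructed the intended argument. The model computation itself is correct and clean: the law of cosines gives $1 - \langle\nu,\nabla r\rangle = (\cosh a - \cosh(\rho-r))/(\sinh\rho\sinh r)$, and your observation that this is nonnegative and $O(e^{-2\underline r(\Sigma_0)})$ with no control on the drift $a$ beyond $\rho - a = \underline r(\Sigma_0) \to \infty$ is exactly the right reason the model yields the lemma for free.

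The weak link is your source for the $L^2$-smallness of $\mathring h$, which you need before De Lellis--M\"uller can enter. The pointwise bound \eqref{eqn:secondffbound} does not suffice, since $|\Sigma|\to\infty$ kills a naive integration; and the Codazzi/Hopf-differential heuristic you propose is circular as written: the Hopf differential is $\bar\partial$-closed up to a small pointwise error, but converting that into an $L^2$ bound on $\mathring h$ requires solving a $\bar\partial$-equation on $\Sigma$ with constants controlled by the conformal geometry of $\Sigma$ — which is precisely what the De Lellis--M\"uller step is supposed to deliver, not an input to it. The route Neves--Tian use (and this paper replicates in Section 2) is much shorter: the Christodoulou--Yau inequality \eqref{CY} gives $\int_\Sigma|\mathring h|^2 = o(1)$ directly once the pinching condition \eqref{pinching} (or the Hawking mass bound used in Section 2) is in force. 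That hypothesis is inherited from the cited Lemma 6.7 even though the appendix does not restate it, and it is in any case available at the point of use in Lemma \ref{lem:Deltawp}. If you replace the Hopf-differential step with this Christodoulou--Yau argument, the remainder of your sketch — rescaled closeness to a geodesic sphere, elliptic bootstrap to a scale-invariant $C^1$ estimate, and the model computation — is the correct proof, modulo the genuine technical work in the compactness step that you have flagged honestly.
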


\begin {lemma} [Cf.~Lemma 3.2 in \cite{Neves-Tian:2009}]  
Let $K$ be the Gauss curvature of $\Sigma$. Then
\begin{align} \label{expansionGauss}
4 \, K = (H^2 - 4) - 2 \, |\mathring {h}|^2 + 64 \, m \, e^{- 3 \, r}  -  96 \, m \, |\nabla_\Sigma r|^2 e^{- 3 \, r} + O (e^{- 5\, r}).
\end{align}
\end {lemma}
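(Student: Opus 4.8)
The plan is to obtain \eqref{expansionGauss} from the Gauss equation together with the Schwarzschild--anti-deSitter asymptotics \eqref{def:SadS}. Since $\Sigma$ is a sphere it is two-sided in $M$, so with $h$ the scalar second fundamental form with respect to $\nu$ and $\sec_g(T_p\Sigma)$ the sectional curvature of $(M,g)$ on the tangent plane, the intrinsic Gauss curvature satisfies $K = \sec_g(T_p\Sigma) + \det h$. For a surface one has $|h|^2 = |\mathring h|^2 + \frac{1}{2} H^2$ and $\det h = \frac{1}{2}(H^2 - |h|^2) = \frac{1}{4} H^2 - \frac{1}{2} |\mathring h|^2$, so
\[
4 \, K = 4 \, \sec_g(T_p \Sigma) + H^2 - 2 \, |\mathring h|^2.
\]
It therefore suffices to show $\sec_g(T_p\Sigma) = -1 + 16 \, m \, e^{-3 \, r} - 24 \, m \, |\nabla_\Sigma r|^2 \, e^{-3 \, r} + O(e^{-5 \, r})$.

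To this end I would first compute the sectional curvatures of the \emph{exact} warped product $\bar g_m = d r\otimes d r + \phi(r)^2 \, g_{\mathbb{S}^2}$ with $\phi(r)^2 = \sinh^2 r + \tfrac{2 m}{3 \sinh r}$, so that $g = \bar g_m + Q$. By the standard curvature formulas for a warped product over the round sphere, a $2$-plane containing $\partial_r$ has sectional curvature $-\phi''/\phi$, the plane tangent to an $\mathbb{S}^2$-orbit has sectional curvature $(1 - (\phi')^2)/\phi^2$, and all mixed components vanish because the warping function depends only on $r$ and the orbits have constant curvature. Since $\partial_r = \bar\nabla r = \nabla r$ and $|\nabla_\Sigma r|^2 = 1 - \langle \nu, \nabla r\rangle^2$, decomposing the tangent plane into its radial and orbit-tangent parts gives
\[
\sec_{\bar g_m}(T_p\Sigma) = \left(1 - |\nabla_\Sigma r|^2\right) \frac{1 - (\phi')^2}{\phi^2} - |\nabla_\Sigma r|^2 \, \frac{\phi''}{\phi}.
\]

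Next I would Taylor-expand in $e^{-r}$. From $\phi^2 = \sinh^2 r + \tfrac{2m}{3\sinh r}$ one finds $(1 - (\phi')^2)/\phi^2 = -1 + 2 m \, (\sinh r)^{-3} + O((\sinh r)^{-5})$ and $\phi''/\phi = 1 + m \, (\sinh r)^{-3} + O((\sinh r)^{-5})$; using $(\sinh r)^{-1} = 2 e^{-r} + O(e^{-3r})$, hence $(\sinh r)^{-3} = 8 e^{-3r} + O(e^{-5r})$, these become $(1 - (\phi')^2)/\phi^2 = -1 + 16 \, m \, e^{-3r} + O(e^{-5r})$ and $\phi''/\phi = 1 + 8 \, m \, e^{-3r} + O(e^{-5r})$. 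Substituting into the last display yields $\sec_{\bar g_m}(T_p\Sigma) = -1 + 16 \, m \, e^{-3r} - 24 \, m \, |\nabla_\Sigma r|^2 \, e^{-3r} + O(e^{-5r})$. Finally one passes from $\bar g_m$ to $g$: since $|Q|_{\bar g} + |\bar\nabla Q|_{\bar g} + |\bar\nabla^2 Q|_{\bar g} = O(e^{-5r})$ and the model geometry is uniformly bounded, a standard perturbation estimate for the curvature gives $|\sec_g(T_p\Sigma) - \sec_{\bar g_m}(T_p\Sigma)| = O(e^{-5r})$. Combining this with the Gauss equation from the first step gives \eqref{expansionGauss}.

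The only real work is the bookkeeping in the two expansions: one must retain enough terms to reach the $O(e^{-5r})$ remainder --- in particular $\phi^2 - \sinh^2 r = \tfrac{2m}{3\sinh r}$ is only $O(e^{-3r})$, so the $m^2$-terms are $O(e^{-6r})$ and drop out while the $O(e^{-5r})$ error is genuinely present --- and the perturbation estimate must be carried out relative to the hyperbolic reference metric $\bar g$, whose $\bar g$-controlled norms of $Q$ dominate the $g$-curvature correction because the background sectional curvatures tend to $-1$. Neither point is conceptually difficult; this is Lemma 3.2 of \cite{Neves-Tian:2009} transcribed to the mass convention of \eqref{def:SadS}.
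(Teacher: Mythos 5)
Your argument is correct: the Gauss equation $4K = 4\,\sec_g(T_p\Sigma) + H^2 - 2\,|\mathring h|^2$, the interpolation formula $\sec_{\bar g_m}(T_p\Sigma) = (1 - |\nabla_\Sigma r|^2)\,(1-(\phi')^2)/\phi^2 - |\nabla_\Sigma r|^2\,\phi''/\phi$ for the rotationally symmetric model, the Taylor expansions in $e^{-r}$, and the $O(e^{-5r})$ perturbation from $\bar g_m$ to $g$ all check out and reproduce the coefficients $64m$ and $-96m$ exactly. The paper itself gives no proof but refers to Lemma~3.2 of Neves--Tian, and your derivation is the same computation (Neves--Tian use the traced Gauss equation with $R - 2\,\Ric(\nu,\nu)$ in place of $2\,\sec_g(T_p\Sigma)$, which is algebraically equivalent in dimension three), transcribed to this paper's mass convention as you note.
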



\section {Canonical foliation} \label{sec:hawkingalongfoliation}

Let $(M, g)$ be asymptotic to Schwarzschild-anti-deSitter of mass $m > 0$. \\

It has been shown by R.~Rigger \cite{Rigger:2004} that there is a family of stable constant mean curvature spheres 
\begin{align} \label{canonicalfoliation}
\{\Sigma_A\}_{A > A_0} \qquad \text{ where } \qquad A = \area(\Sigma_A)
\end{align}
that foliate the complement of a compact subset of $M$. A. Neves and G. Tian \cite{Neves-Tian:2009} have shown that \textit{every} stable constant mean curvature sphere $\Sigma$ in $(M, g)$ that encloses $B_{r_0}$ and with  
\begin{align} \label{pinching}
\sup_{x \in \Sigma } r(x) - \frac{6}{5} \inf_{x \in \Sigma} r (x)  < - C
\end{align}
is a leaf of this \textit{canonical foliation}. Here, $r_0>1$ and $C>0$ are constants depending only on $(M, g)$. They also give an alternative proof of the existence of the canonical foliation. \\

We will use some of the estimates from \cite{Neves-Tian:2009} to estimate the Hawking mass along the foliation. 

For every $r > 0$ sufficiently large, consider the leaf $\Sigma$ of the canonical foliation \cite{Neves-Tian:2009} with mean curvature 
\[
H_m (r) = \frac{d}{d\, r} \log \left( \sinh^2 r + \frac{2 \, m}{3 \, \sinh r} + O (e^{- 2 \, r})\right) = 2 + 4\,  e^{- 2 \, r} - 16 \, m \, e^{- 3 \, r} + 4\,  e^{- 4 \, r} + O (e^{- 5 \, r}).
\]
From the estimates obtained in Section 8 of \cite{Neves-Tian:2009}, we see that  
\begin{align*}
\Ric (\nu, \nu) &= - 2 - 16 \, m \, e^{- 3 \, r} + O (e^{- 5 \, r}) \\
\mathring h &= O (e^{- 3 \, r}). 
\end{align*}
In particular, 
\[
\Ric (\nu, \nu) + |h|^2 = 8 \, e^{- 2 \, r} - 48 \, m \, e^{- 3 \, r} + 16 \, e^{- 4 \, r} + O (e^{- 5 \, r}),
\]
since clearly 
\[
H^2 =  4 + 16 \, e^{- 2 \, r}  - 64 \, m \, e^{- 3 \, r} + 32 \, e^{- 4 \, r} + O (e^{- 5 \, r}).
\]
Also, by the Gauss equation,
\[
2 \, K = 8 \, e^{- 2 \, r} + 16 \, e^{- 4 \, r} + O (e^{- 5 \, r}), 
\]
so that
\[
H^2 - 4 = 4 \, K - 64 \, m \, e^{- 3 \, r} + O (e^{- 5 \, r}).
\]
From this, we obtain the estimate 
\[
\frac{ |\Sigma|^{\frac{1}{2}} }{(16 \, \pi)^{\frac{3}{2}}} \Big(16 \, \pi - \int_\Sigma (H^2 - 4) \Big) = m + O (e^{- 2 \, r})
\]
for the Hawking mass of $\Sigma$. \\

Let $v \in C^\infty(M)$ be the lapse function of the foliation with respect to the parametrization above. Thus 
\[
L_\Sigma v = \frac{d}{d \, r} H_m (r) =  - 8 \,   e^{- 2 \, r} + 48 \, m \, e^{- 3 \, r} - 16\,  e^{- 4 \, r} + O (e^{- 5 \, r})
\] 
where 
\[
L_\Sigma = - \Delta_\Sigma - (\Ric (\nu, \nu) + |h|^2)
\]
is the stability operator of $\Sigma$. Note that  
\[
w = v - \fint_\Sigma v
\]  
satisfies the equation  
\[
- \Delta_\Sigma w  - \left ( \Ric (\nu, \nu) + |h|^2 \right) w = O (e^{- 5 \, r}).
\]
Moreover, 
\[
\fint_\Sigma v = 1 + O (e^{- 3 \, r}).
\]
Analyzing the spectrum of $L_\Sigma$ as in Lemma 3.13 of \cite{Huisken-Yau:1996} or Section 8 of \cite{Neves-Tian:2009}, we obtain 
\begin{align*}
\int_\Sigma w^2 &= O (e ^{- 2 \, r})\\
\int_\Sigma |\nabla_\Sigma w|^2 &= O (e^{- 4 \, r}). 
\end{align*}
Indeed, the distance of $0$ from the spectrum of $L_\Sigma$ is at least $48 \, m \, e^{- 3 \, r} (1 + o (1))$. 

It follows in particular that, as $r \to \infty$, 
\[
v = 1 + o (1).
\]
In our application below, it seems more natural to work with the lapse function $u \in C^\infty(\Sigma)$ for the original parametrization by the area $A$ of the canonical foliation. Note that 
\begin{align} \label{eqn:lapsefunction}
\int_\Sigma u = H^{-1} \qquad \text{ and } \qquad  \int_\Sigma |\nabla_\Sigma u|^2 = O (A^{-4}).
\end{align}

\begin {proposition} \label{lem:Hacf}
Let $H_A$ be the mean curvature of the the leaf $\Sigma_A$ in the canonical foliation. The Hawking mass along the foliation,  
\[
A \mapsto F(A) = m_{H} (\Sigma_A) = \frac{A^{\frac{1}{2}}}{(16 \, \pi)^{\frac{3}{2}}} \left( 16 \, \pi - (H_A^2 - 4) A\right),
\]
is continuously differentiable and 
\[
F (A) = m + O (A^{- 1}) \qquad \text{ and } \qquad F' (A) = O (A^{- 2}).
\] 
\begin {proof} 
From a standard computation using the first and second variation of area along with the Gauss equation, we obtain 
\begin{align*}
& (16 \, \pi)^{\frac{3}{2}} \, F' (A) \, q^2 \, A^{\frac{1}{2}}\\
& \qquad =  \int_\Sigma (R + 6) +   \int_\Sigma |\mathring h|^2 + 2 \int_\Sigma |\nabla_\Sigma \log u|^2 + \frac{1}{2} \, (q^2 - 1)  \, (16 \,  \pi  + 12 \, A - 3 \, A \, H^2) \\
& \qquad = O (A^{-\frac{3}{2}}) + O (A^{-2}) + 2 \, \int_\Sigma |\nabla_\Sigma \log u|^2 +  (q^2 - 1) \, O (A^{- \frac{1}{2}})
\end{align*}
where 
\[
q^2  = \fint_\Sigma u^{-1} \, \fint_\Sigma u .
\]
Estimate  \eqref{eqn:lapsefunction} for the lapse function gives 
\[
\int_\Sigma |\nabla_\Sigma \log u|^2 = O (A^{-2}) \qquad \text{ and } \qquad q^2 = 1 + O (A^{-2}),
\]
from which the assertion follows.
\end {proof}
\end {proposition}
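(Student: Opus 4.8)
The plan is to treat the two substantive assertions separately: the value of $F$ is essentially bookkeeping on top of what is recorded above, while the derivative bound is a first- and second-variation computation.

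\emph{The value of $F$ and continuous differentiability.} It is already recorded above that along the canonical foliation
\[
\frac{|\Sigma|^{\frac{1}{2}}}{(16\,\pi)^{\frac{3}{2}}}\Big(16\,\pi - \int_\Sigma (H^2-4)\Big) = m + O(e^{-2\,r}).
\]
Since $H_A$ is constant on $\Sigma_A$, the left-hand side equals $F(A)$, and since $|\Sigma_A|$ is comparable to $e^{2\,r}$ along the foliation this gives $F(A) = m + O(A^{-1})$. Continuous differentiability of $F$ follows at once from smoothness of the foliation together with smoothness of $A \mapsto \area(\Sigma_A)$ and $A \mapsto H_A$.

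\emph{The derivative.} First I would parametrize the foliation by the area $A = \area(\Sigma_A)$, so that the lapse function $u \in C^\infty(\Sigma_A)$ satisfies $\int_\Sigma u = H_A^{-1}$ and the bounds \eqref{eqn:lapsefunction}. Differentiating
\[
F(A) = \frac{A^{\frac{1}{2}}}{(16\,\pi)^{\frac{3}{2}}}\big(16\,\pi - (H_A^2-4)\,A\big)
\]
requires $\tfrac{d}{dA}(H_A^2\,A)$; the quantity $\tfrac{d}{dA}H_A$ comes from the first variation of mean curvature and therefore brings in the stability operator $L_\Sigma = -\Delta_\Sigma - (\Ric(\nu,\nu)+|h|^2)$ applied to $u$. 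Combining this with the first and second variation of area, the Gauss equation, and the Gauss--Bonnet theorem (which is what produces the constant $16\,\pi$), and then rearranging, one should arrive at an identity of the form
\begin{align*}
(16\,\pi)^{\frac{3}{2}}\,F'(A)\,q^2\,A^{\frac{1}{2}} &= \int_\Sigma (R+6) + \int_\Sigma |\mathring h|^2 + 2\int_\Sigma |\nabla_\Sigma \log u|^2 \\
&\qquad + \tfrac{1}{2}\,(q^2-1)\,\big(16\,\pi + 12\,A - 3\,A\,H_A^2\big),
\end{align*}
where $q^2 = \fint_\Sigma u^{-1} \cdot \fint_\Sigma u \geq 1$ records the deviation of $u$ from a constant. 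Producing this identity cleanly is the step I expect to be the main obstacle: the variations have to be carried out with a \emph{non-constant} normal speed $u$, and one has to keep track of the factor $q^2$, which is precisely the quantity that enters the Geroch-type monotonicity computation when the constant-mean-curvature flow is compared with a flow at constant speed. Everything after this identity is routine.

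Finally I would estimate the four terms. The curvature term is $O(A^{-3/2})$ since $R+6 = O(e^{-5\,r})$ and $|\Sigma| = O(e^{2\,r})$; the umbilicity-defect term is $O(A^{-2})$ from the foliation estimate $\mathring h = O(e^{-3\,r})$. From \eqref{eqn:lapsefunction}, together with $u$ being comparable to $A^{-1}$ and a Poincar\'e inequality on the nearly round leaf $\Sigma_A$ — whose relevant eigenvalue is bounded below by $c\,A^{-3/2}$, as recorded above in the spectral analysis of $L_\Sigma$ — one gets $\int_\Sigma |\nabla_\Sigma \log u|^2 = O(A^{-2})$ and $q^2 - 1 = O(A^{-2})$. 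Since $A\,(H_A^2-4) = \int_\Sigma (H_A^2-4) = 16\,\pi + O(e^{-r})$ by Gauss--Bonnet and the decay estimates, the cofactor $16\,\pi + 12\,A - 3\,A\,H_A^2$ is $O(1)$, so the last term is $O(A^{-2})$ as well. Dividing by $q^2\,A^{\frac{1}{2}} = (1+o(1))\,A^{\frac{1}{2}}$ then yields $F'(A) = O(A^{-2})$.
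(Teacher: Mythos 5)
Your proposal follows the paper's proof essentially line for line: the same posited identity for $(16\pi)^{3/2}F'(A)q^2A^{1/2}$ (which the paper also only asserts via ``a standard computation''), the same term-by-term estimates using $R+6=O(e^{-5r})$, $\mathring h=O(e^{-3r})$, and \eqref{eqn:lapsefunction}, and the same derivation of $F(A)=m+O(A^{-1})$ from the Hawking-mass expansion along the foliation. One small inaccuracy in attribution: the Poincar\'e constant you invoke for $\Delta_\Sigma$ on the nearly round leaf is $\sim 8\pi/A$, not the $\sim A^{-3/2}$ spectral gap of the Jacobi operator $L_\Sigma$ that you cite; however, even your weaker (and thus still valid) lower bound suffices since it gives $q^2-1=O(A^{-3/2})$ and $(q^2-1)\cdot O(A^{-1/2})=O(A^{-2})$, and the bound on $\int_\Sigma|\nabla_\Sigma\log u|^2$ needs only $u$ comparable to $A^{-1}$, not Poincar\'e. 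So the argument goes through and matches the paper's.
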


\begin {corollary} As $A_1, A_2 \to \infty$, 
\begin{align} \label{HestimateNTspheres}
H_{A_1}^2 - H_{A_2}^2 = \left( 16 \, \pi + o (1) \right)  \, \left( A_1^{-1} - A_2^{-1} \right).  
\end{align} 
\end {corollary}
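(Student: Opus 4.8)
The plan is to invert the defining relation for the Hawking mass. Solving
\[
F(A) = \frac{A^{\frac{1}{2}}}{(16\,\pi)^{\frac{3}{2}}}\left(16\,\pi - (H_A^2 - 4)\,A\right)
\]
from Proposition~\ref{lem:Hacf} for $H_A^2$ gives
\[
H_A^2 = 4 + 16\,\pi\,A^{-1} - (16\,\pi)^{\frac{3}{2}}\,G(A), \qquad G(A) := F(A)\,A^{-\frac{3}{2}},
\]
so that subtracting the expressions for $A_1$ and $A_2$ yields
\[
H_{A_1}^2 - H_{A_2}^2 = 16\,\pi\,(A_1^{-1} - A_2^{-1}) - (16\,\pi)^{\frac{3}{2}}\,\left(G(A_1) - G(A_2)\right).
\]
Hence \eqref{HestimateNTspheres} reduces to the claim that $G(A_1) - G(A_2) = o(1)\,(A_1^{-1} - A_2^{-1})$ as $A_1, A_2 \to \infty$.

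To see this, I would differentiate, $G'(A) = F'(A)\,A^{-\frac{3}{2}} - \frac{3}{2}\,F(A)\,A^{-\frac{5}{2}}$, and insert the two bounds supplied by Proposition~\ref{lem:Hacf}, namely $F(A) = m + O(A^{-1}) = O(1)$ and $F'(A) = O(A^{-2})$, which give $G'(A) = O(A^{-\frac{5}{2}})$. Assuming without loss of generality that $A_0 < A_1 \le A_2$, this yields $|G(A_1) - G(A_2)| \le \int_{A_1}^{A_2} |G'(A)|\,dA = O\!\left(\int_{A_1}^{A_2} A^{-\frac{5}{2}}\,dA\right)$; bounding $A^{-\frac{5}{2}} = A^{-\frac{1}{2}}\,A^{-2} \le A_1^{-\frac{1}{2}}\,A^{-2}$ on $[A_1, A_2]$ and using $A_1^{-1} - A_2^{-1} = \int_{A_1}^{A_2} A^{-2}\,dA$, this is $O(A_1^{-\frac{1}{2}})\,(A_1^{-1} - A_2^{-1})$. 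Substituting back gives $H_{A_1}^2 - H_{A_2}^2 = \left(16\,\pi + O(\min(A_1, A_2)^{-\frac{1}{2}})\right)(A_1^{-1} - A_2^{-1})$, which is exactly \eqref{HestimateNTspheres}.

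The only point that requires care — and the reason the decay estimate $F(A) = m + O(A^{-1})$ does not by itself suffice — is the regime where $A_1$ and $A_2$ are close together: there the factor $A_1^{-1} - A_2^{-1}$ is itself small, and bounding $F(A_1) - F(A_2)$ crudely by $O(A_1^{-1}) + O(A_2^{-1})$ throws away exactly the cancellation one needs. Proposition~\ref{lem:Hacf} anticipates this by also recording the derivative bound $F'(A) = O(A^{-2})$, and the integral comparison above is the device that converts it into the sharp form \eqref{HestimateNTspheres}; the rest is routine bookkeeping.
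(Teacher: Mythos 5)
Your proof is correct and is the natural deduction from Proposition~\ref{lem:Hacf}; the paper itself states the corollary without giving a proof, so there is no written argument to compare against, but your route is surely the intended one. The algebra of solving the Hawking-mass formula for $H_A^2$ is right, $G'(A) = O(A^{-5/2})$ follows correctly from $F = O(1)$ and $F' = O(A^{-2})$, and the bound $\int_{A_1}^{A_2} A^{-5/2}\,dA \le A_1^{-1/2}(A_1^{-1} - A_2^{-1})$ for $A_1 \le A_2$ (both sides flipping sign together if the ordering is reversed, as needed for the signed statement) gives exactly the $o(1)$ coefficient claimed. Your closing remark is also a genuine observation worth keeping: the bound $F(A) = m + O(A^{-1})$ alone is insufficient in the regime $A_1 \approx A_2$, because the crude triangle-inequality estimate on the error terms produces something of order $A_1^{-5/2}$, which need not be $o(A_1^{-1} - A_2^{-1})$; the derivative bound $F'(A) = O(A^{-2})$ is precisely what the Proposition supplies to close this, and is why the paper records it.
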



\section {Isoperimetric profile} \label{sec:isoperimetricprofile}

For convenient reference, we collect several properties of the isoperimetric profile of asymptotically hyperbolic Riemannian $3$-manifolds $(M, g)$. For simplicity, we assume that $\partial M$ is connected and the only closed $H = 2$ surface in $(M, g)$.  \\

First, recall the definition of the \textit{isoperimetric profile}  
\[
V_g : (\area_g(\partial M), \infty) \to (0, \infty)
\]
given by 
\[
A \mapsto V_g(A) = \sup \{ \vol_g(\Omega) : \Omega \in \mathcal{F}_A \} 
\] 
where
\[
\mathcal{F}_A = \{\text{$\Omega$ a compact region in $M$ with $\partial M \Subset \Omega$ and  $\area_g(\partial \Omega) = A$}\}.
\]
A region $\Omega \in \mathcal{F}_A$ with 
\[
V_g(A) = \area_g(\partial \Omega).
\] 
is called an \textit{isoperimetric region} for area $A$, and its boundary an \textit{isoperimetric surface}. 

It is well known and discussed in e.g.~\cite[p.~24]{Bray:1997} or \cite[p.~428]{Chodosh:2016} (see also \cite{Nardulli:2014}) that $V_g (A)$ is absolutely continuous and that for every $A > 0$, the left derivative $V_g(A)_-'$ and the right derivative $V_g(A)_+'$ exist, and that   
\begin {align} \label{eqn:convexity}
V_g (A)_-' \leq H^{-1} \leq V_g (A)_+'
\end {align}
where $H$ is the mean curvature of any isoperimetric surface of area $A$. 

Using the assumption on $\partial M$ and standard arguments, it follows that the isoperimetric profile is strictly increasing. From this, we see that the complement of an isoperimetric region has no bounded components. In particular, the boundary of a component of an  isoperimetric region has either two or one component, depending on whether it includes the horizon or doesn't.

By Theorem 1.1 in \cite{Chodosh:2016}, if we assume in addition that $R \geq - 6$, then isoperimetric regions for area $A$ exist provided that $A > \area_g(\partial M)$ is sufficiently large. 


\section {Some generalities about large isoperimetric regions}

For convenient reference, we collect several generalities about large isoperimetric regions in asymptotically hyperbolic manifolds. 

\begin {lemma} [Cf.~Lemma 2.2 in \cite{Ji-Shi-Zhu:2015}]
Let $(M, g)$ be a complete Riemannian $3$-manifold that is asymptotically hyperbolic. There is a constant $C >0 $ with the following property. For every isoperimetric surface $\Sigma \subset M$ in $(M, g)$, we have 
\begin {align*}
\area_g(\Sigma \cap B_r) \leq C \, e ^{- 2 \, r}.
\end {align*}
In particular, for every $p > 2$, 
\begin{align} \label{decayisop}
\int_\Sigma e^{- p \, r} = O (1).
\end{align}
\end {lemma}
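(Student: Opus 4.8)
The plan is to prove the pointwise bound $\area_g(\Sigma \cap B_r) \le \area_g(S_r)$ for every isoperimetric surface $\Sigma$ and every $r > 1$ by a cut-and-paste comparison, following the argument of \cite{Ji-Shi-Zhu:2015}, and then to read off both assertions from $\area_g(S_r) = 4 \, \pi \sinh^2 r \, (1 + O(e^{- 3 \, r})) = O(e^{2 \, r})$, which holds because $(M, g)$ is asymptotically hyperbolic. Write $\Sigma = \partial \Omega$ for the isoperimetric region $\Omega$ and put $V = \vol_g(\Omega)$. I would take $\Omega' = \Omega \cup \bar B_r$ as a competitor. It is again an admissible region --- it is compact, and it contains $\partial M$ whenever $M$ has a boundary --- with $\vol_g(\Omega') = V + \delta$ where $\delta = \vol_g(B_r \setminus \Omega) \ge 0$, and with
\[
\area_g(\partial \Omega') \le \area_g(\Sigma \setminus B_r) + \area_g(S_r) = \area_g(\Sigma) - \area_g(\Sigma \cap B_r) + \area_g(S_r).
\]
Here one uses the measure-theoretic inclusion $\partial^* \Omega' \subseteq (\partial^* \Omega \setminus \bar B_r) \cup S_r$, valid up to an $\mathcal{H}^2$-null set (the ``seam'' $\Sigma \cap S_r$ has vanishing $\mathcal{H}^2$-measure for all but countably many $r$, which suffices since $r \mapsto \area_g(\Sigma \cap B_r)$ is left-continuous and $r \mapsto \area_g(S_r)$ is continuous), together with interior regularity of the isoperimetric surface $\Sigma$.

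Since $\Omega'$ has volume $V + \delta \ge V$ and the isoperimetric profile of an asymptotically hyperbolic manifold is non-decreasing (cf.~Appendix \ref{sec:isoperimetricprofile} and \cite{Ji-Shi-Zhu:2015}), a region of volume at least $V$ cannot have perimeter smaller than $\Omega$, so $\area_g(\partial \Omega') \ge \area_g(\Sigma)$; combined with the previous display this yields
\[
\area_g(\Sigma \cap B_r) \le \area_g(S_r) \le C \, e^{2 \, r}
\]
with $C$ depending only on $(M, g)$. For the remaining claim, set $f(s) = \area_g(\Sigma \cap B_s)$, a non-decreasing function of $s$ with $f(s) \le C \, e^{2 \, s}$; splitting off the fixed compact set $B_2$ and applying the layer-cake formula on $\Sigma \setminus B_2$ gives
\[
\int_\Sigma e^{- p \, r} \, d\mu \le e^{- p} \, \area_g(\Sigma \cap B_2) + p \int_2^\infty f(s) \, e^{- p \, s} \, ds \le C \, e^{4 - p} + C \, p \int_2^\infty e^{(2 - p) \, s} \, ds,
\]
which is finite --- and bounded uniformly over all isoperimetric surfaces --- precisely when $p > 2$.

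The only step that really requires care is the perimeter accounting for $\Omega \cup \bar B_r$: one must verify that, after discarding the $\mathcal{H}^2$-negligible seam $\Sigma \cap S_r$, the reduced boundary of the union lies in $(\partial^* \Omega \setminus \bar B_r) \cup (S_r \setminus \bar \Omega)$, so that its area is at most $\area_g(\Sigma \setminus B_r) + \area_g(S_r)$; this is a routine computation with sets of finite perimeter. The other ingredient, monotonicity of the isoperimetric profile, is standard for asymptotically hyperbolic ends --- it follows from the non-compactness of the complement of an isoperimetric region together with comparison with hyperbolic space far out --- and is recorded in Appendix \ref{sec:isoperimetricprofile}. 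Everything else is elementary.
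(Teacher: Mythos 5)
The paper does not actually supply a proof here; it cites Lemma~2.2 of \cite{Ji-Shi-Zhu:2015}. Your argument is the standard cut-and-paste comparison with a coordinate ball, which is the same technique used there, and it is correct. Two remarks are worth recording.

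First, as you silently noticed, the displayed bound in the paper's statement must be read with the exponent $+2r$, i.e.\ $\area_g(\Sigma\cap B_r)\le C\,e^{2r}$; the printed $e^{-2r}$ is a sign typo (a bound decreasing in $r$ would be absurd, and only the $e^{2r}$ version feeds into the layer-cake computation that yields $\int_\Sigma e^{-pr}=O(1)$ for $p>2$). Your proof establishes exactly the corrected inequality via $\area_g(S_r)=4\pi\sinh^2 r\,(1+O(e^{-3r}))$.

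Second, the one place where you lean on an external fact is the monotonicity of the isoperimetric profile. The paper records strict monotonicity in Appendix~\ref{sec:isoperimetricprofile}, but only under the additional hypothesis that $\partial M$ is connected and the unique closed $H=2$ surface, whereas the lemma at hand assumes only that $(M,g)$ is asymptotically hyperbolic. For your comparison you only need the following weaker statement: a compact admissible region $\Omega'$ with $\vol_g(\Omega')\ge\vol_g(\Omega)$ cannot have $\area_g(\partial\Omega')<\area_g(\partial\Omega)$ when $\Omega$ is isoperimetric. This is immediate without any hypothesis on $\partial M$: if $\area_g(\partial\Omega')<\area_g(\partial\Omega)=:A_0$, adjoin to $\Omega'$ a disjoint small geodesic ball far out in the end whose boundary area makes up the deficit, obtaining a region in $\mathcal{F}_{A_0}$ of volume strictly larger than $\vol_g(\Omega)=V_g(A_0)$, contradicting the definition of $V_g(A_0)$ as a supremum. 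It would be cleaner to phrase the step this way rather than invoking monotonicity of the profile wholesale. Otherwise the measure-theoretic perimeter accounting for $\Omega\cup\bar B_r$, the handling of the seam $\Sigma\cap S_r$ by a genericity/continuity argument, and the Abel-summation step on $f(s)=\area_g(\Sigma\cap B_s)$ are all in order.
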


\begin {lemma} 
Let $(M, g)$ be a complete Riemannian $3$-manifold that is asymptotically hyperbolic.  As $A \to \infty$,  
\begin{align} \label{eqn:largeisoH}
H = 2 + o (1)
\end{align}
where $H$ is the mean curvature of an isoperimetric surface $\Sigma$ with area $A$.
\begin {proof} 
The assertion follows from \eqref{CYR} and \eqref{decayisop}.
\end{proof}
\end {lemma}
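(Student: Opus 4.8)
The plan is to establish the two one-sided estimates $H \le 2 + o(1)$ and $H \ge 2 - o(1)$ as $A = \area_g(\Sigma) \to \infty$ by separate arguments. The first is exactly what the cited ingredients give; the second requires a short comparison with the coordinate spheres.

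\textit{Upper bound.} An isoperimetric surface is a closed stable constant mean curvature surface, so the Christodoulou--Yau estimate \eqref{CYR} applies. Since $(M,g)$ is asymptotically hyperbolic we have $R + 6 = O(e^{-3r})$, hence $\int_\Sigma |R + 6| = O(1)\int_\Sigma e^{-3r} = O(1)$ by \eqref{decayisop}. Discarding the non-negative term $\frac23\int_\Sigma |\mathring h|^2$ in \eqref{CYR} and moving $\frac23\int_\Sigma (R+6)$ to the right-hand side, one obtains
\[
(H^2 - 4)\,A = \int_\Sigma (H^2 - 4) \le \frac{64}{3}\,\pi + O(1) = O(1),
\]
so $H^2 \le 4 + O(A^{-1})$.

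\textit{Lower bound.} I would first note that the outer radius $\overline r(\Sigma)$ tends to infinity: $\Sigma$ is a smooth embedded surface of bounded mean curvature, so by the monotonicity formula (or by the area estimate in the preceding lemma applied to $\Sigma \subset \overline{B_{\overline r(\Sigma)}}$) its area $A$ is controlled in terms of $\overline r(\Sigma)$; since $A \to \infty$, necessarily $\overline r(\Sigma) \to \infty$. By compactness there is a point $p \in \Sigma \cap S_{\overline r(\Sigma)}$, and near $p$ the surface $\Sigma$ lies on the $B_{\overline r(\Sigma)}$-side of the coordinate sphere $S_{\overline r(\Sigma)}$, with the same outward unit normal. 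The strong maximum principle for the quasilinear mean curvature operator then gives $H = H_\Sigma(p) \ge H_{S_{\overline r(\Sigma)}}(p)$. A direct computation from the asymptotically hyperbolic ansatz gives $H_{S_r} = 2\coth r + O(e^{-3r})$, so $H \ge 2\coth \overline r(\Sigma) - O(e^{-3\,\overline r(\Sigma)}) \ge 2 - o(1)$; in particular $H > 0$ for $A$ large, so the upper bound upgrades to $H \le 2 + o(1)$. Combining, $H = 2 + o(1)$.

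\textit{Main obstacle.} The upper bound is immediate from \eqref{CYR} and \eqref{decayisop}; the real work is the lower bound. The two points to be careful about there are (i) the claim $\overline r(\Sigma) \to \infty$, i.e.\ that a large isoperimetric surface cannot be confined to a fixed compact set, which is where regularity of isoperimetric boundaries (or the preceding area estimate) is used, and (ii) the verification that the hypotheses of the maximum-principle comparison hold at $p$ --- that the outward normals of $\Sigma$ and of $S_{\overline r(\Sigma)}$ agree and that $\Sigma$ lies to the inside. Neither point is deep, but both should be made explicit.
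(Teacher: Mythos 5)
Your upper bound is exactly what the paper's cited ingredients give: since $R + 6 = O(e^{-3r})$ and hence $\int_\Sigma (R+6) = O(1)$ by \eqref{decayisop}, dropping the non-negative term $\tfrac{2}{3}\int_\Sigma |\mathring h|^2$ in \eqref{CYR} yields $(H^2-4)\,A \le O(1)$, so $|H| \le 2 + O(A^{-1})$. And you are right that this is a one-sided estimate: \eqref{CYR} and \eqref{decayisop} say nothing to prevent $H^2 - 4$ from being very negative, so the bound $H \ge 2 - o(1)$ does not follow from them. That lower bound is genuinely used later (for instance in the proof of Lemma \ref{torusestimateisoperimetric}, where one sends $A_2 \to \infty$ and invokes $(V_g(A_2)'_-)^{-2} \ge H_{A_2}^2 \ge 4 - o(1)$, and in Lemma \ref{lem:smallcomponents}, where the limiting surface is identified as an $H=2$ surface). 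So you have correctly identified a step that the paper's one-line proof leaves implicit.

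Your barrier argument for the lower bound is correct and is the natural way to close the gap: the outer coordinate sphere $S_{\overline r(\Sigma)}$ touches $\Sigma$ from outside at some $p$, both with outward normal pointing away from the origin; the interior tangency comparison at $p$ gives $H = H_\Sigma(p) \ge H_{S_{\overline r(\Sigma)}}(p) = 2\coth \overline r(\Sigma) + o(1) \ge 2 - o(1)$, and $\overline r(\Sigma) \to \infty$ follows from the area estimate of the preceding lemma, since $A = \area_g(\Sigma) = \area_g(\Sigma \cap B_{\overline r(\Sigma)}) \le C\, e^{2\,\overline r(\Sigma)}$. (Two small remarks. First, one does not need $H>0$ to upgrade the upper bound: $H^2 \le 4 + o(1)$ already gives $H \le 2 + o(1)$. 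Second, there is an alternative route to the lower bound via the Gauss equation and Gauss--Bonnet, which gives $\tfrac{1}{4}(H^2-4)A = 2\pi\chi(\Sigma) + \tfrac{1}{2}\int_\Sigma |\mathring h|^2 + O(1)$; this yields $H \ge 2 - o(1)$ whenever $\chi(\Sigma) \ge 0$, i.e.\ genus at most one, but for general isoperimetric boundaries it requires an a priori genus bound, so your maximum-principle comparison is cleaner.)
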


The following result has been obtained in \cite{Chodosh:2016}. We include an alternative, more elementary derivation below. 

\begin {lemma} [Cf.~Proposition 6.4 of \cite{Chodosh:2016}] \label{lem:onelargecomponent}
Let $(M, g)$ be a complete Riemannian $3$-manifold that is asymptotically hyperbolic. There is a constant $A_0 > 1$ with the following property. Every isoperimetric region for area $A \geq A_0$ has a unique component $\Omega$ with $\area_g (\partial \Omega) \geq A_0$. Moreover, $(\partial \Omega) \setminus (\partial M)$ is connected.
\begin {proof} 
Suppose that there are two components $\Omega_i$ with $A_i = \area_g(\partial \Omega_i) \geq 1$ where $i = 1, 2$. For definiteness, let us assume that $A_1 \leq A_2$. Let $x : \R^3 \setminus B_1(0) \to M$ be a chart at infinity of $(M, g)$. We may choose regions $\bar \Omega_i \subset \R^3$ such that 
\[
x^{-1} ( \Omega_i ) \cup B_2(0) \subset \bar \Omega_i
\] 
and
\[
\area_g(\partial \Omega_i) = \area_{\bar g} (\partial \bar \Omega_i) + O (1) \qquad \text{ and } \qquad \vol_g (\Omega_i) = \vol_{\bar g} ( \bar \Omega_i) + O (1)
\]
where we have used the previous lemma for the area estimate. By the hyperbolic isoperimetric inequality, 
\[
\vol_{\bar g} ( \bar \Omega_i) \leq \frac{1}{2} \, A_i - \pi \, \log A_i + O (1).
\]
Using that $\Omega_1, \Omega_2$ are components of an isoperimetric region and that $(M, g)$ is asymptotically hyperbolic, we see that $\vol_{g} (\Omega_1) + \vol_g(\Omega_2)$ is at least as large as the volume of a ball of area $A_1 + A_2 - \area_g (\partial M)$ in hyperbolic space. It follows that
\[
\frac{1}{2} \left( A_1 + A_2 \right) - \pi \, \log (A_1 + A_2) + O (1) \leq \vol_{g} (\Omega_1) + \vol_g(\Omega_2). 
\] 
From this and the previous estimate, we obtain
\[
\frac{1}{2} \left( A_1 + A_2 \right) - \pi \, \log (A_1 + A_2) + O (1) \leq \Big( \frac{1}{2}\, A_1 - \pi \, \log A_1+ O (1) \Big) +  \Big( \frac{1}{2}\, A_2 - \pi \, \log A_2+ O (1) \Big) 
\]
Put another way, 
\[
\frac{1}{2} \, A_1 \leq \frac{A_1 \, A_2}{A_1 + A_2} \leq O (1).
\]
The connectedness of the outer boundary follows from the monotonicity of the isoperimetric profile at infinity; see Lemma 3.5 in \cite{Chodosh:2016}. 
\end {proof}
\end {lemma}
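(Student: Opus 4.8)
The plan is to argue by contradiction using volume comparison with hyperbolic space. Suppose an isoperimetric region for area $A$ has at least two components $\Omega_1, \Omega_2$, each with boundary area above some fixed threshold; order them so that $A_1 = \area_g(\partial\Omega_1) \leq A_2 = \area_g(\partial\Omega_2)$. (The component meeting $\partial M$ is handled by the same bookkeeping, its boundary simply having two pieces.) I would transport each $\Omega_i$ through a chart at infinity to a region $\bar\Omega_i \subset \R^3$ that contains $B_2(0)$ and whose $\bar g$-area and $\bar g$-volume agree with $\area_g(\partial\Omega_i)$ and $\vol_g(\Omega_i)$ up to additive $O(1)$. For this I would use the previous lemma, $\area_g(\Sigma\cap B_r) \leq C\, e^{-2r}$, which lets one cap off the boundary at a large but bounded radius with only an $O(1)$ loss of area, together with the asymptotically hyperbolic fall-off $|P|_{\bar g} = O(e^{-3r})$ to control the change of area and volume under the metric identification.

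Next I would feed these into the sharp hyperbolic isoperimetric inequality, using the asymptotic expansion of the volume of a geodesic ball in $\mathbb{H}^3$ as a function of its surface area, to get $\vol_{\bar g}(\bar\Omega_i) \leq \tfrac{1}{2}A_i - \pi\log A_i + O(1)$ for $i = 1,2$. In the other direction, I would produce a competitor for the isoperimetric problem in $(M,g)$ by comparison with a \emph{single} large geodesic ball near infinity: since $\Omega_1 \cup \Omega_2$ is isoperimetric for total boundary area $A$ and $(M,g)$ is asymptotically hyperbolic, $\vol_g(\Omega_1) + \vol_g(\Omega_2)$ is at least the volume of a hyperbolic geodesic ball of surface area $A_1 + A_2 - \area_g(\partial M)$, hence at least $\tfrac{1}{2}(A_1+A_2) - \pi\log(A_1+A_2) + O(1)$.

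Combining the two estimates, the leading term $\tfrac{1}{2}(A_1+A_2)$ cancels and one is left with $\pi\log\frac{A_1 A_2}{A_1 + A_2} \leq O(1)$, so $\frac{A_1 A_2}{A_1 + A_2} \leq O(1)$; since $A_1 \leq A_2$ this forces $A_1 \leq O(1)$. Choosing $A_0$ larger than this absolute constant then yields uniqueness of the large component. Finally, for connectedness of the outer boundary $(\partial\Omega)\setminus(\partial M)$, I would invoke monotonicity of the isoperimetric profile at infinity (Lemma 3.5 in \cite{Chodosh:2016}): two far-out boundary components could be traded for a single one of strictly smaller area enclosing the same volume, contradicting the isoperimetric property — equivalently, strict monotonicity of the profile forbids bounded components in the complement, which constrains $\partial\Omega$ to a single outer piece besides $\partial M$.

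The main obstacle is the cut-and-paste bookkeeping: producing the competitors $\bar\Omega_i$ — and the single-ball lower-bound competitor — with genuinely only $O(1)$ errors in \emph{both} area and volume, which is exactly where the decay estimate $\area_g(\Sigma\cap B_r)\leq C\,e^{-2r}$ and the asymptotically hyperbolic fall-off are essential; once these are in hand, the remainder is a short comparison computation.
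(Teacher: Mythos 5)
Your proposal is correct and follows essentially the same route as the paper: transport the two large components to hyperbolic space with $O(1)$ area and volume errors via the area-decay lemma, compare against the sharp hyperbolic isoperimetric profile and a single-ball competitor, cancel the leading terms to force $A_1 \leq O(1)$, and invoke Lemma 3.5 of \cite{Chodosh:2016} for connectedness of the outer boundary. No substantive differences from the paper's argument.
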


\begin {lemma} \label{lem:smallcomponents}
Let $(M, g)$ be a complete Riemannian $3$-manifold that is asymptotically hyperbolic. Let $\{\Omega_i\}_{i = 1}^\infty$ be a sequence of isoperimetric regions in $(M, g)$ with $\area_{g}(\partial \Omega_{i})\to \infty$. Let $\Sigma_i$ be a component of $\partial \Omega_i$ such that $\area_{g} (\Sigma_i) < A_0$ where $A_0 > 1$ is as in Lemma \ref{lem:onelargecomponent}. The distance between $\Sigma_i$ and $\partial M$ tends to zero as $i \to \infty$. 
\begin {proof} 
We have $H_i = 2 + o (1)$ for the mean curvature of $\Sigma_i$ by \eqref{eqn:largeisoH}. The diameter of $\Sigma_i$ is a priori bounded by the monotonicity formula. If the sequence has a subsequential limit in $M$, then this limit is a closed surface of constant mean curvature $2$ and hence a component of $\partial M$. If the sequence is divergent, then we can follow a subsequence (in the sense of pointed geometric convergence) to a closed surface of constant mean curvature $2$ in hyperbolic space. Such surfaces do not exist.
\end {proof}
\end {lemma}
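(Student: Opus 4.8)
The goal is to show that any small-area boundary component $\Sigma_i$ of a large isoperimetric region must collapse onto $\partial M$. The strategy is to combine a uniform mean-curvature bound, a uniform diameter bound, and a compactness/classification argument that excludes any limiting surface away from the horizon. I would first record that $H_i = 2 + o(1)$ for the mean curvature of $\Sigma_i$: this is immediate from \eqref{eqn:largeisoH}, since each $\Sigma_i$ is a component of an isoperimetric surface and $\area_g(\partial\Omega_i)\to\infty$. (One should note that the outward unit normal here is the one pointing out of $\Omega_i$; since the complement of an isoperimetric region has no bounded components, by the monotonicity of the isoperimetric profile, $\Sigma_i$ is ``inner'' and its mean curvature with respect to the outward-of-$\Omega_i$ normal is the relevant one.) Next I would invoke the monotonicity formula for surfaces of bounded mean curvature to get a uniform upper bound on $\diam(\Sigma_i)$ and a uniform lower area bound, so that the $\Sigma_i$ cannot degenerate by becoming long and thin.

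With these a priori bounds in hand, the argument splits into two cases depending on whether the sequence $\Sigma_i$ stays in a fixed compact set or escapes to infinity. In the first case, pass to a subsequence converging (smoothly away from possible multiplicity, as a varifold, or in $C^\infty$ after passing to the limit region) to a closed surface $\Sigma_\infty \subset M$ of constant mean curvature $2$; stability passes to the limit, so $\Sigma_\infty$ is a closed $H=2$ surface, hence by hypothesis a component of $\partial M$. This already forces $\operatorname{dist}(\Sigma_i,\partial M)\to 0$ along that subsequence. In the second case, where the $\Sigma_i$ drift to the asymptotically hyperbolic end, rescale/recenter and use the fact that $(M,g)$ is asymptotically hyperbolic to obtain pointed $C^\infty$ convergence of $(M,g,p_i)$ to hyperbolic space $\mathbb{H}^3$; the limit of $\Sigma_i$ would be a closed surface of constant mean curvature $2$ in $\mathbb{H}^3$. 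But the constant mean curvature of a closed surface in $\mathbb{H}^3$ must exceed $2$ (a closed surface has an interior touching point with a geodesic sphere of radius $\to\infty$, along which $H\to 2$ from above, so by the comparison principle $H>2$), a contradiction. Running this dichotomy along every subsequence yields the claim.

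The main obstacle is handling the convergence of the surfaces cleanly: one must argue that the $\Sigma_i$ do not degenerate (no loss of area, no thin necks, controlled topology) using only the bound $H_i = 2 + o(1)$ and isoperimetric-minimality, and one must ensure that whatever limit one extracts is genuinely a \emph{closed} surface rather than a noncompact minimal-type object or a varifold supported on something degenerate. The diameter bound from monotonicity, together with the lower area bound, is what rules out collapse; mild care is needed when passing stability and the $H=2$ condition to the limit, but since the mean curvatures converge and the second fundamental forms are controlled on the relevant scale (via the Christodoulou--Yau estimate \eqref{CYR} and elliptic regularity for the CMC equation), the limiting surface is smooth and closed. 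The nonexistence of closed $H=2$ surfaces in $\mathbb{H}^3$, and the hypothesis that $\partial M$ is the only closed $H=2$ surface in $(M,g)$, then close both cases.
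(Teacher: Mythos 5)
Your argument follows essentially the same route as the paper's: record $H_i = 2 + o(1)$ from \eqref{eqn:largeisoH}, obtain a uniform diameter bound from the monotonicity formula, and split into the dichotomy of a subsequential limit inside $M$ (which is forced to be a component of $\partial M$) versus escape to infinity (where the limit would be a closed CMC-$2$ surface in hyperbolic space, which does not exist). The additional details you supply---the enclosing geodesic-sphere comparison to rule out closed $H = 2$ surfaces in $\mathbb{H}^3$, and the curvature estimates for passing to a smooth closed limit---are left implicit in the paper but are compatible with, and merely flesh out, its argument.
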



\section {Extensions of results from \cite{Chodosh:2016}} 

In this section, we collect several extensions of results in the work of the first-named author \cite{Chodosh:2016} to the case where $(M, g)$ is asymptotic to Schwarzschild-anti-deSitter, rather than exactly Schwarzschild-anti-deSitter outside of a compact set. \\

\begin {lemma} [Cf.~the proof of ``Case 3" in Theorem 1.1 in \cite{Chodosh:2016} and Proposition 3.1 in \cite{Ji-Shi-Zhu:2015}] \label{lem:nodrift}
Let $(M, g)$ be a complete Riemannian $3$-manifold with $R \geq -6$ that is asymptotically hyperbolic, but not hyperbolic space, and such that $\partial M$ is connected and the only closed $H = 2$ surface in $(M, g)$. There are $A_0 > 1$ and $r_0 > 1$ with the following property. Let $\Omega$ be the unique large component of an isoperimetric region $\tilde \Omega$ for area $\tilde A \geq A_0$.  Then 
\[
\Omega \cap B_{r_0} \neq \emptyset.
\]
\begin {proof}
Assume that $\Omega \cap B_{r} = \emptyset$ where $r >1$ is large. As $r \to \infty$, 
\begin{align*}
\vol_g(\tilde \Omega) &= \vol_{g} ( \Omega) + o (1), \\
\area_g(\partial \tilde \Omega) &= \area_g (\partial \Omega) + \area_g (\partial M) + o (1),
\end{align*}
where we have used Lemma \ref{lem:onelargecomponent} and Lemma \ref{lem:smallcomponents}. 
Similarly,  
\begin{align*}
\vol_{\bar g} ( \Omega) &= \vol_{g} ( \Omega) + o (1), \\
\area_{\bar g}(\partial  \Omega) &= \area_{g} (\partial  \Omega) + o (1).
\end{align*}
Let $A = \area_g (\partial  \Omega)$. By the hyperbolic isoperimetric inequality, as $A \to \infty$, 
\begin{align*}
\vol_{\bar g} ( \Omega) \leq \frac{1}{2} \, A- \pi \, \log A+ \pi \, (1 + \log \pi) + o (1).
\end{align*}
Using that $\tilde \Omega$ is isoperimetric and Lemma \ref{lem:isoballs}, as $A \to \infty$, 
\begin{align*}
& \vol_{g} (\tilde \Omega) \\
&\geq \vol_g (\text{centered coordinate ball with the same boundary area as } \partial \tilde \Omega)\\
&\geq \frac{1}{2} \, \big( A+ \area_g(\partial M) + o (1) \big) - \pi \, \log \big(A + \area_g(\partial M) + o (1) \big) +  \pi \, (1 + \log \pi) + V(M, g) + o (1) \\
& = \frac{1}{2} \, A - \pi \, \log A+  \pi \, (1 + \log \pi) + \underbrace{ V(M, g) + \frac{1}{2} \area_g(\partial M) }_{(\ast)} + o (1). 
\end{align*}
The quantity $(\ast)$ is positive by Proposition 5.3 in \cite{Chodosh:2016} (building on the earlier work \cite{Brendle-Chodosh:2014} of the first-named author with S.\ Brendle). These estimates are not compatible. 
\end {proof}
\end {lemma}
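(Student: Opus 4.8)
The plan is to run a volume-comparison argument by contradiction. Suppose the unique large component $\Omega$ of the isoperimetric region $\tilde\Omega$ is disjoint from $B_r$ with $r \to \infty$. On the one hand, since $\Omega$ sits far out in the asymptotically hyperbolic region, its $g$-area and $g$-volume are within $o(1)$ of those measured in the model metric $\bar g$; combining this with Lemma \ref{lem:onelargecomponent} and Lemma \ref{lem:smallcomponents} (which control the residual small components), both $\vol_g(\tilde\Omega)$ and $\area_g(\partial\tilde\Omega)$ differ from $\vol_{\bar g}(\Omega)$ and $\area_{\bar g}(\partial\Omega)+\area_g(\partial M)$ respectively by $o(1)$. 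The sharp hyperbolic isoperimetric inequality then gives the upper bound
\[
\vol_{\bar g}(\Omega) \leq \tfrac{1}{2}\,A - \pi\,\log A + \pi\,(1 + \log\pi) + o(1),
\]
where $A = \area_g(\partial\Omega)$, since the ball of area $A$ in hyperbolic space has exactly this volume up to $o(1)$ as $A \to \infty$.

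On the other hand, the isoperimetric property of $\tilde\Omega$ forces a matching \emph{lower} bound: $\vol_g(\tilde\Omega)$ is at least the volume of a \emph{centered coordinate ball} whose boundary has the same area as $\partial\tilde\Omega$, and by Lemma \ref{lem:isoballs} (the refined expansion \eqref{eqn:isoballs} together with the exact constant term $V(M,g)$) this volume equals
\[
\tfrac{1}{2}\,A - \pi\,\log A + \pi\,(1 + \log\pi) + V(M,g) + \tfrac{1}{2}\area_g(\partial M) + o(1).
\]
The discrepancy between the upper and lower bounds is the quantity $(\ast) = V(M,g) + \tfrac{1}{2}\area_g(\partial M)$, which is strictly positive by Proposition 5.3 in \cite{Chodosh:2016} (resting on the work \cite{Brendle-Chodosh:2014}), ruling out $(M,g)$ being hyperbolic space. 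This contradiction shows that $\Omega$ must meet some fixed $B_{r_0}$.

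The main obstacle is the bookkeeping needed to get the constant term right on \emph{both} sides simultaneously: one must track the additive $O(1)$ errors from passing $g \leftrightarrow \bar g$ in the asymptotic region, the $o(1)$ from discarding small boundary components near $\partial M$, and — most delicately — identify the exact constant $V(M,g)$ in the volume expansion \eqref{eqn:isoballs} for coordinate balls so that it survives the comparison rather than being absorbed into an $O(1)$. Once these expansions are pinned down, the positivity of $(\ast)$ from \cite{Chodosh:2016, Brendle-Chodosh:2014} closes the argument immediately.
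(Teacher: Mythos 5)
Your proposal is correct and follows essentially the same route as the paper: assume for contradiction that $\Omega$ escapes every $B_r$, pass between $g$ and $\bar g$ in the far region (using Lemmas \ref{lem:onelargecomponent} and \ref{lem:smallcomponents} to control the residual small components), apply the sharp hyperbolic isoperimetric inequality for the upper bound, invoke the isoperimetric property together with the expansion of Lemma \ref{lem:isoballs} for the lower bound, and conclude by the positivity of $(\ast) = V(M,g) + \tfrac{1}{2}\area_g(\partial M)$ from Proposition 5.3 in \cite{Chodosh:2016}. The decomposition of the argument, the lemmas invoked, and the identification of the decisive constant $(\ast)$ all match the paper's proof.
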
 

\begin {remark} 
We expect that the assumption that $\partial M$ be connected in Lemma \ref{lem:nodrift} can be removed by using the inverse mean curvature flow with forced jumps along with computations as in \cite[p.\ 427]{Chodosh:2016}.
\end {remark}

\begin {lemma} [Cf.~Proposition 8.3 in \cite{Chodosh:2016}] \label{lem:mHbounds}
Let $(M, g)$ be a complete Riemannian $3$-manifold that is asymptotic to Schwarzschild-anti-deSitter with mass $m > 0$. We also assume that $R \geq - 6$ and that $\partial M$ is connected and the only closed $H = 2$ surface in $(M, g)$. There is $A_0 > 1$ with the following property. Assume that $\Omega$ is the unique large component of an isoperimetric region for area $A \geq A_0$. Let $\Sigma = (\partial \Omega) \setminus (\partial M)$. Then $\Sigma$ is connected and 
\[
m_H (\Sigma) \leq 4 \, m.
\]
\begin {proof}
We describe the minor modifications to the proof of Proposition 8.3 in \cite{Chodosh:2016}, where the same result is shown under the additional assumption that $(M, g)$ is equal to Schwarzschild-anti-deSitter outside of a compact set. We use Lemma \ref{lem:isoballs} below instead of Lemma A.2 in \cite{Chodosh:2016}. We use Lemma \ref{lem:nodrift} instead of applying S.~Brendle's characterization of closed constant mean curvature surfaces in exact Schwarzschild-anti-deSitter in \cite[p.~427]{Chodosh:2016}. 
\end {proof}
\end {lemma}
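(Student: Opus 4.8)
The plan is to adapt the proof of Proposition 8.3 in \cite{Chodosh:2016}, which establishes the same Hawking mass bound $m_H(\Sigma) \leq 4m$ under the stronger hypothesis that $(M,g)$ is exactly Schwarzschild-anti-deSitter outside a compact set. The connectedness of $\Sigma$ was already recorded in Lemma \ref{lem:onelargecomponent}, so only the mass bound requires work. The overall strategy is to run the weak inverse mean curvature flow (IMCF) starting from $\Sigma$: along the flow, the Geroch--Hawking mass is monotone nondecreasing because $R \geq -6$ and the surfaces are connected (this is the key place the scalar curvature hypothesis enters, via the Geroch monotonicity with cosmological constant $-6$). The flow sweeps out to infinity, and in the limit one compares the Hawking mass of $\Sigma$ against the asymptotic mass $m$ of the Schwarzschild-anti-deSitter end. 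If $m_H(\Sigma)$ were too large, the volume enclosed by $\Omega$ would be forced to be smaller than the volume enclosed by a centered coordinate sphere of the same boundary area, contradicting that $\tilde \Omega$ is isoperimetric.

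Concretely, the steps I would carry out are as follows. First I would invoke Lemma \ref{lem:nodrift} to fix $r_0 > 1$ with $\Omega \cap B_{r_0} \neq \emptyset$ once $A$ is large; this replaces the use of S.~Brendle's rigidity characterization of closed CMC surfaces in exact Schwarzschild-anti-deSitter that is used at \cite[p.~427]{Chodosh:2016} to locate $\Sigma$ in the asymptotic region. Second, I would use the isoperimetric comparison against centered coordinate spheres: by Lemma \ref{lem:isoballs} below, $\vol_g(B_{r(A)}) = \tfrac12 A - \pi \log A + O(1)$, so the isoperimetric property of $\tilde\Omega$ (together with the decomposition into the large component $\Omega$ and a small collar about $\partial M$, via Lemma \ref{lem:onelargecomponent} and Lemma \ref{lem:smallcomponents}) forces a lower bound on $\vol_g(\Omega)$ in terms of $A$. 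Third, I would use the weak IMCF emanating from $\Sigma$ out to infinity. Because $\Sigma$ is connected and $R \geq -6$, the modified Geroch mass $m_H$ is nondecreasing along the flow; its limit as the flow exhausts the end is governed by the asymptotics \eqref{def:SadS}, yielding $m_H(\Sigma) \leq m + o(1) \leq 4m$ for $A$ large — except that the first passage of the flow may jump (it only stays connected after jumping), which is exactly why one gets $4m$ rather than $m$: one loses a factor from the possible disconnection/jump, just as in \cite{Chodosh:2016}. The constant $A_0$ is then chosen large enough that all of the $o(1)$ error terms above and the conclusion of Lemma \ref{lem:nodrift} are in force simultaneously.

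I expect the main obstacle to be verifying that the IMCF argument of \cite{Chodosh:2016} goes through verbatim once the exact-Schwarzschild-anti-deSitter-at-infinity hypothesis is weakened to the asymptotic condition \eqref{def:SadS}. The Geroch monotonicity computation itself is insensitive to this: it only uses $R \geq -6$ pointwise and connectedness of the evolving surfaces, both of which we have. What needs checking is (i) that the weak IMCF exists and sweeps out a neighborhood of infinity — this follows as in \cite[Section 4]{Huisken-Ilmanen:2001} with the area functional replaced by the brane functional exactly as in \cite[Proposition 3.1]{Chodosh:2016}, noting the footnote in Appendix \ref{sec:initialdata} — and (ii) that the limiting value of the Hawking mass along the flow is $m + o(1)$; this uses only that the metric is $C^2$-close to the model at rate $O(e^{-5r})$, which is amply enough to control the Hawking mass of the large coordinate-sphere-like leaves, cf.~the computation in Appendix \ref{sec:hawkingalongfoliation}. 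Since these are exactly the ingredients already isolated in this paper, the modification is genuinely minor, and the proof reduces to the two bookkeeping substitutions indicated above: Lemma \ref{lem:isoballs} in place of \cite[Lemma A.2]{Chodosh:2016}, and Lemma \ref{lem:nodrift} in place of the Brendle rigidity step.
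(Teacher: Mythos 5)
Your proposal correctly picks out the two bookkeeping substitutions that the paper names (Lemma~\ref{lem:isoballs} in place of Lemma~A.2 of \cite{Chodosh:2016}, and Lemma~\ref{lem:nodrift} in place of the Brendle rigidity step), and you are right that the Geroch monotonicity of the Hawking mass under $R \geq -6$ is the engine behind the bound. But the central step of your sketch, namely that along weak IMCF emanating from $\Sigma$ the Hawking mass converges and ``its limit as the flow exhausts the end is governed by the asymptotics~\eqref{def:SadS}, yielding $m_H(\Sigma) \leq m + o(1)$,'' has a genuine gap. That convergence is exactly what is known to \emph{fail} in the asymptotically hyperbolic setting: Neves showed that inverse mean curvature flow in asymptotically hyperbolic manifolds need not produce surfaces that become round fast enough, so the Hawking mass along the flow does not in general tend to the total mass $m$. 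The asymptotics~\eqref{def:SadS} control only the background metric, not the shape of the evolving surfaces, and do not repair this. Asserting the convergence would essentially hand you the (open) asymptotically hyperbolic Penrose inequality, which ought to be a red flag. A further, smaller, issue is your explanation of the constant $4$: jumps in Huisken--Ilmanen weak IMCF do not degrade the Geroch monotonicity, so ``disconnection/jumps'' cannot be the source of the factor of $4$.

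The way the argument in Proposition~8.3 of \cite{Chodosh:2016} actually closes --- and what the paper invokes here ``with minor modifications'' --- avoids ever passing to a limit of $m_H$ at infinity. Instead, one uses Geroch monotonicity only to bound $\int_{\Sigma_t} H^2$ \emph{at each finite time} in terms of $m_H(\Sigma)$, then Cauchy--Schwarz to turn this into the lower bound $\frac{d}{dt}\vol_g(\Omega_t) = \int_{\Sigma_t} H^{-1} \geq |\Sigma_t|^{3/2} \left( \int_{\Sigma_t} H^2 \right)^{-1/2}$, and integrates this differential inequality. The resulting lower bound for the enclosed volume is then played off against the hyperbolic isoperimetric inequality and the coordinate-ball volume expansion of Lemma~\ref{lem:isoballs}, using Lemma~\ref{lem:nodrift} to rule out drift to infinity; the constant $4$ comes out of the explicit constants in this ODE comparison, not from jumps. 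To close the gap in your write-up, you should replace the ``$m_H \to m$ at infinity'' step with this finite-time differential-inequality comparison of enclosed volumes.
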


\begin {lemma} [Cf.~Lemma A.2 in \cite{Chodosh:2016}] \label{lem:isoballs}
Let $(M, g)$ be a complete Riemannian $3$-manifold that is asymptotic to Schwarzschild-anti-deSitter with mass $m > 0$. For $A > 0$ large, let $r (A) > 0$ be such that the centered coordinate sphere $S_{r(A)}$ has area $A$. Denoting the renormalized volume of $(M, g)$ by $V(M, g)$, we have  the expansion
\begin{align*}
\vol_g(B_{r(A)}) = \frac{1}{2} \, A - \pi \, \log A + \pi \, (1 + \log \pi) + V(M, g) - 8 \, \pi^{\frac{3}{2}} \, m \, A^{- \frac{1}{2}}  + O (A^{-1}).
\end{align*}
\begin {proof}
The proof of Lemma A.2 in \cite{Chodosh:2016} for $(M, g)$ equal to Schwarzschild-anti-deSitter outside of a compact set extends to the present generality. 
\end {proof}
\end {lemma}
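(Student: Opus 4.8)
The final statement in the excerpt is Lemma~\ref{lem:isoballs}, the expansion of the volume enclosed by a centered coordinate sphere of area $A$ in a manifold asymptotic to Schwarzschild-anti-deSitter. The plan is to reduce everything to an explicit one-variable integral using the coordinate radius $r$ as parameter, treat the metric defect $Q$ (which is $O(e^{-5r})$ together with two derivatives) as a lower-order perturbation, and carefully track the renormalized-volume constant.

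First I would express both the area of $S_r$ and the volume of $B_r$ as integrals in $r$. From the form of the metric \eqref{def:SadS}, the induced metric on $S_r$ is $\left(\sinh^2 r + \tfrac{2m}{3\sinh r}\right) g_{\mathbb S^2}$ up to a term controlled by $Q = O(e^{-5r})$, so $\area_g(S_r) = 4\pi\left(\sinh^2 r + \tfrac{2m}{3\sinh r}\right) + O(e^{-5r}) = \pi e^{2r} + (\text{bounded}) + \tfrac{16\pi m}{3}e^{-r} + O(e^{-3r})$. Inverting this asymptotic relation gives $r(A)$ in terms of $A$: writing $e^{2r(A)} = \tfrac{A}{\pi} + (\text{correction})$, one solves iteratively to get $r(A) = \tfrac12\log A - \tfrac12 \log \pi + O(A^{-1})$, with an explicit $m$-dependent correction at order $A^{-3/2}$ inside the log's argument that will ultimately produce the $A^{-1/2}$ term. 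The volume is $\vol_g(B_{r(A)}) = \vol_g(U) + \int_{r_U}^{r(A)} \int_{\mathbb S^2} \sqrt{\det g|_{S_\rho}}\, \sqrt{g_{rr}}\, d\mu_{\mathbb S^2}\, d\rho$, and since $g_{rr} = 1$ exactly in these coordinates (up to $Q$), this is $4\pi\int^{r(A)}\left(\sinh^2\rho + \tfrac{2m}{3\sinh\rho}\right)\,d\rho$ plus a convergent piece.

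Second, I would organize the divergent part of this integral against the model. The key computation is $\int_0^{r}\left(\sinh^2\rho + \tfrac{2m}{3\sinh\rho}\right) d\rho = \tfrac14 e^{2r} - \tfrac r2 + C_m + o(1)$ for an explicit constant $C_m$ depending on $m$ (coming from $\int \tfrac{2m}{3\sinh\rho}$, which converges at $+\infty$ after subtracting its own tail and contributes to the renormalized volume). The \emph{renormalized volume} $V(M,g)$ is by definition the finite part obtained by subtracting the hyperbolic divergences $\tfrac14 e^{2r} - \tfrac r2$ (suitably normalized) and sending $r\to\infty$; so I would simply substitute $r = r(A)$ into $4\pi\left(\tfrac14 e^{2r} - \tfrac r2\right)$ and collect. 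Using $\tfrac14 e^{2r(A)} = \tfrac{A}{4\pi} + (\text{lower order, including an }m\text{-term}\ \propto A^{-1/2})$ and $-\tfrac r2 = -\tfrac14\log A + \tfrac14\log\pi + O(A^{-1})$, one gets $\pi e^{2r(A)} \cdot \tfrac14 \cdot 4 = \tfrac12 A + \dots$ and $-2\pi r(A) = -\pi\log A + \pi\log\pi + O(A^{-1})$. Gathering the $O(1)$ terms — the $\pi\log\pi$, a $\pi$ from the Jacobian expansion of $\sinh^2$, and the renormalized volume — yields the stated $\pi(1+\log\pi) + V(M,g)$, and the $A^{-1/2}$ term collects precisely to $-8\pi^{3/2} m\, A^{-1/2}$.

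The main obstacle, and the place requiring the most care, is matching conventions: the precise normalization of $V(M,g)$ and verifying that the constant $\pi(1+\log\pi)$ emerges correctly from the interplay of the $\sinh^2$ expansion, the inversion $r\mapsto r(A)$, and the renormalization subtraction — a single misplaced factor of $2$ or $\pi$ (recall the excerpt even flags that the mass convention differs from \cite{Neves-Tian:2009} by a factor of $2$) would throw off the $O(1)$ and $O(A^{-1/2})$ terms. Since the corresponding statement is Lemma~A.2 of \cite{Chodosh:2016} in the case that $(M,g)$ is exactly Schwarzschild-anti-deSitter outside a compact set, the honest shortcut — and the one I would take — is to observe that the proof there only uses the metric asymptotics near infinity together with the definition of renormalized volume, none of which are affected by replacing ``exactly Schwarzschild-anti-deSitter outside a compact set'' by the weaker decay $|Q|_{\bar g} + |\bar\nabla Q|_{\bar g} + |\bar\nabla^2 Q|_{\bar g} = O(e^{-5r})$: the error terms contributed by $Q$ to area and volume are $O(e^{-5r}) = O(A^{-5/2})$, far below the claimed error $O(A^{-1})$. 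Thus the derivation goes through verbatim, and I would present it as such, indicating the one-line check that the $Q$-contributions are negligible.
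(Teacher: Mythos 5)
Your argument matches the paper's, which simply records that the proof of Lemma~A.2 in \cite{Chodosh:2016} carries over verbatim once one observes that the $Q$-perturbations are negligible at the $O(A^{-1})$ level and that the convergent $O(1)$ contribution they make to the enclosed volume is by definition absorbed into the renormalized volume $V(M,g)$. One small imprecision worth flagging: since $|Q|_{\bar g} = O(e^{-5r})$ is a \emph{relative} estimate while the unperturbed area element on $S_r$ is of order $e^{2r}$, the $Q$-contributions to $\area_g(S_r)$ and to the residual tail in $\vol_g(B_r)$ are $O(e^{-3r}) = O(A^{-3/2})$, not $O(e^{-5r}) = O(A^{-5/2})$ as you wrote; this is still $o(A^{-1})$, so the conclusion is unaffected.
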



\bibliography{bib} 
\bibliographystyle{amsplain}
\end{document}